\newtheorem{theorem}{Theorem}[section]
\newtheorem{lemma}[theorem]{Lemma}
\newtheorem{corollary}[theorem]{Corollary}
\newtheorem{proposition}[theorem]{Proposition}
\theoremstyle{definition}
\newtheorem{definition}[theorem]{Definition}
\newtheorem{example}[theorem]{Example}
\newtheorem{remark}[theorem]{Remark} 
\numberwithin{equation}{section}
\newcommand\qqq{\textbf{[??]}} 
\newcommand\tensor{\otimes}
\newcommand\isomorphic{\cong}
\newcommand\directsum{\oplus}
\DeclareMathOperator{\linalgspan}{span}
\DeclareMathOperator{\Divisor}{div}
\newcommand\union{\cup}
\newcommand\Union{\bigcup}
\newcommand\intersect{\cap}
\newcommand\abs[1]{{\left|#1\right|}}
\newcommand\Projective{{\bf P}} 
\newcommand\fieldK{\mathbb{K}} 
\newcommand\Pinf{{P_\infty}} 
\newcommand\Raff{\mathcal{R}} 
\newcommand\vinf{{v_\Pinf}} 
\DeclareMathOperator{\Pic}{Pic}
\newcommand\LL{\mathcal{L}}
\newcommand\OC{\mathcal{O}_C}
\begin{document}

\title{On Jacobian group arithmetic for typical divisors on curves}

\author{Kamal Khuri-Makdisi}
\address{Mathematics Department, 
American University of Beirut, Bliss Street, Beirut, Lebanon}
\email{kmakdisi@aub.edu.lb}
\subjclass[2000]{14Q05, 11Y16, 14H40, 11G20}
\thanks{November 14, 2017}

\begin{abstract}
In a previous joint article with F. Abu Salem, we gave efficient
algorithms for Jacobian group arithmetic of ``typical'' divisor
classes on $C_{3,4}$ curves, improving on similar results by other
authors.  At that time, we could only state that a general divisor was
typical, and hence unlikely to be encountered if one implemented these
algorithms over a very large finite field.  This article pins down an
explicit characterization of these typical divisors, for an arbitrary
smooth projective curve of genus $g \geq 1$ having at least one
rational point.  We give general algorithms for Jacobian group
arithmetic with these typical divisors, and prove not only that the
algorithms are correct if various divisors are typical, but also that
the success of our algorithms provides a guarantee that the
resulting output is correct and that the resulting input and/or output
divisors are also typical.  These results apply in particular to our
earlier algorithms 
for $C_{3,4}$ curves.  As a byproduct, we obtain a further speedup of
approximately 15\% on our previous algorithms for $C_{3,4}$ curves.
\end{abstract}

\maketitle

\section{Introduction}
\label{section1}

Let $C$ be a smooth projective algebraic curve of genus $g \geq 1$
over a field $\fieldK$, and assume that $C$ has a $\fieldK$-rational
point $\Pinf$.  
Let $\Raff$ be the coordinate ring of the affine curve $C - \{\Pinf\}$;
then the group of $\fieldK$-rational points of the Jacobian of $C$ can be
identified with the ideal class group of the Dedekind domain $\Raff$.
A previous series of articles (by three different groups of authors) on
certain degree~$3$ covers of~$\Projective^1$, particularly $C_{3,4}$ curves
\cite{BEFG,BEFG-article,FO,FOR,FASKKM,OT-C35}, gives explicit formulas
for group arithmetic in the Jacobian of $C$ when $\fieldK$ is a large
finite field, under a certain genericity assumption on the divisors
whose classes are being added in the Jacobian.  This genericity
assumption was first introduced in~\cite{BEFG-article}, where such
divisors were called ``typical''; all the articles above give fast
algorithms for Jacobian  group arithmetic under the hypothesis that
the divisors (also, in fact, the pairs of divisors) one encounters are
typical, and that the result of the group operation is typical.  The
above articles, however, do not include a test to verify whether the
input divisors or output data are in fact typical, so that in
principle the algorithms might return wrong results without this being
detected during the computation.  It is desirable to know when such
algorithms have failed, so that one can redo the computation using
slightly slower algorithms that work for all divisors (for example,
those in~\cite{KKM,KKMAsymptotic}).

In this article, we give a straightforward explicit condition for a
divisor to be typical, for arbitrary $C$.  For the $C_{3,4}$ case, we show
that the algorithms in~\cite{FASKKM}, which involve two inversions (and
approximately 125 multiplications) in $\fieldK$ per group operation in the
Jacobian of a $C_{3,4}$ curve, give correct results and yield typical
divisors as output, provided that both inversions can be carried out, i.e.,
that one encounters nonzero elements of $\fieldK$ at those two moments.
Our general criterion for typicality can be expressed in terms of the
rank of certain matrices, or equivalently in terms of the structure of
suitable Gr\"obner bases for the ideal $I_D$ and its first syzygies.  For
an arbitrary curve $C$ with distinguished point $\Pinf$, we describe a
modification of the algorithms from~\cite{FASKKM}, that allows us to carry
out Jacobian arithmetic for typical elements.
To our knowledge, this is the first set of algorithms for typical
divisors on curves that (1) works with a precise definition of
typicality (and a weaker notion of semi-typicality), (2) gives necessary
and sufficient conditions on the 
typicality of the input and/or output divisors to guarantee success of
the algorithms, and (3) can certify that the end result of one's operations
is correct upon success, or else identify non-typical divisors encountered
in the computation.  Although some of the proofs are delicate, we find the
way in which the different ranks of subspaces fit together both intricate
and pleasing.

As a result of our current investigations, we discovered along the way
a nontrivial speedup of the algorithms of~\cite{FASKKM}, that saves 19
multiplications in $\fieldK$ per operation in the Jacobian, a speedup
of approximately 15\%.  This is described in the appendix to this article.


\section{Typical divisors}
\label{section2}

All divisors that we consider in this article are $\fieldK$-rational.
The reader is however encouraged to replace $\fieldK$ by its
algebraic closure $\overline{\fieldK}$, so that every divisor is a sum
of geometric 
points, without worrying about rationality.  This does not affect our
results, since everything we do boils down to the interplay between
different $\fieldK$-rational subspaces of various vector spaces, which
in turn can be recast in terms of the ranks of various matrices with
entries in $\fieldK$; these ranks are unaffected by extension of
scalars.

\begin{definition}
\label{definition2.1}
\begin{enumerate}
\item
Let $\Raff = \fieldK[C - \{\Pinf\}]$ be the affine coordinate ring of
$C$, as in the introduction, and let $N \geq 0$.
For $f \in \Raff$, we define its degree to be the order of
its pole at $\Pinf$:
\begin{equation}
\label{equation2.1.5}
\deg f = -\vinf(f);
\qquad\qquad
\text{by convention, let } \deg 0 = -\infty.
\end{equation}
\item
We define the basic Riemann-Roch space $W^N$ by
\begin{equation}
\label{equation2.1}
W^N = H^0(C,\OC(N \Pinf)) = \{f \in \Raff \mid
\deg(f) \leq N\},
\end{equation}
consisting of elements $f$ of the function field $\fieldK(C)$ that are
regular everywhere except for a pole of order at most $N$ at $\Pinf$.
Thus $\Raff = \Union_{N \geq 0} W^N$.
\item
We define a \emph{good divisor} $D$ on $C$ to be an effective
($\fieldK$-rational) divisor disjoint from $\Pinf$.
\item
For $f \in \Raff$, we will generally write $\Divisor f$ to refer to
the affine part of the divisor, unless otherwise specified.  This
means that $\Divisor f$ will completely ignore the
component at $\Pinf$, and will hence be a good divisor; we then have
$\deg (\Divisor f) = \deg f$.  The actual ``full'' divisor of $f$ is the
degree zero divisor  $\Divisor f - (\deg f)\Pinf$.
\item 
For a good divisor $D$, we define the $\fieldK$-rational 
subspace $W^N_D \subset W^N$ by
\begin{equation}
\label{equation2.2}
W^N_D = H^0(C,\OC(N \Pinf - D)) = W^N \intersect I_D,
\end{equation}
where $I_D \subset \Raff$ is the ideal of functions vanishing on $D$:
\begin{equation}
\label{equation2.3}
I_D = \{ f \in \Raff \mid \Divisor f \geq D \}.
\end{equation}
Hence $W^N_D$ is the space of elements of $I_D$ of degree at most
$N$.  We have $\dim_{\fieldK} \Raff/I_D = \deg D$.
\end{enumerate}
\end{definition}

By Riemann-Roch, we have the following dimensions:
\begin{equation}
\label{equation2.4}
\begin{split}
\dim W^N &= N + 1 - g, \text{ for } N \geq 2g - 1,\\
\dim W^N_D &= N - \deg D + 1 - g,
     \text{ for }N \geq \deg D + 2g - 1.\\
\end{split}
\end{equation}
It is moreover known that for ``most'' good divisors $D$ with $d \geq
g$, the formula for $\dim W^N_D$ above is actually
valid in the larger range $N \geq \deg D + g-1$.
We do not use this fact yet, and will come back to it later.

We also recall the notion of a base point free line bundle
$\LL$.  This means that the
sections of $H^0(C,\LL)$ do not vanish simultaneously on any
(geometric) point in $C(\overline{\fieldK})$, or equivalently on any
nontrivial $\fieldK$-rational effective divisor $E$.  In the setting
where  $\LL = \OC(N \Pinf - D)$, this means that the
only common zeros of the elements $f \in W^N_D$ occur on $D$ ---
equivalently, the elements of $W^N_D$ generate the ideal $I_D$ --- and that
moreover there exists a nonzero $f \in W^N_D$ with $\deg f = N$, to
ensure that no excess ``vanishing'' occurs at $\Pinf$.  The following
result is a standard consequence of Riemann-Roch:
\begin{equation}
\label{equation2.4.5}
\text{If } \deg \LL \geq 2g, \text{ then } \LL 
     \text{ is base point free.}  
\end{equation}
Once again, it is known that ``most'' line bundles of degree $\deg
\LL \geq g+1$ are base point free, and we will come back to
this point later in this section.  We will frequently abuse
terminology and say ``$W^N_D$ is base point free'' when we really mean
the line bundle $\OC(N \Pinf - D)$.

The following two definitions, of typical and semi-typical divisors,
are fundamental to our work.  Our immediate aim is to show that
typical divisors are semi-typical (Proposition~\ref{proposition2.5}
below), and that these notions depend only on the linear equivalence
class of the divisor (Corollary~\ref{corollary2.8} below, which
actually proves something more).  Once those results are established,
we show that all but a ``small'' set of divisors is typical
(Proposition~\ref{proposition2.8.4} below), and mention numerical
bounds (proved in~\cite{KKMbrillnoether}) for the probability that,
over a finite field, a random divisor is typical or semi-typical
(Theorem~\ref{theorem2.8.6}).

\begin{definition}
\label{definition2.4}
In the definitions below, recall that $g \geq 1$, so that $d \geq g
\geq 1$.
\begin{enumerate}
\item
A divisor $D$ is called \emph{typical} if (i) $D$ is a good divisor of
degree $d \geq g$, and (ii) there exist $s \in W^{d+g}_D$ and
$t \in W^{d+g+1}_D$ with 
$s W^{2g} + t W^{2g-1} + W^{d+g-1} = W^{d + 3g}$.
\item
A divisor $D$ is called \emph{semi-typical} if (i) $D$ is a good divisor
of degree $d \geq g$, and (ii) $W^{d+g-1}_D = 0$.
\end{enumerate}
\end{definition}

\begin{proposition}
\label{proposition2.5}
Let $D$ be a typical divisor as above.  Then:
\begin{enumerate}
\item
The sum in the definition is in fact a direct sum 
$s W^{2g} \directsum t W^{2g-1} \directsum W^{d+g-1} = W^{d + 3g}$, and we
also have a direct sum $s W^{2g} \directsum t W^{2g-1} = W^{d+3g}_D$.
\item
The divisor $D$ is semi-typical.
\item
The pair $\{s,t\}$ is an IGS (i.e., ideal generating set) for $D$,
in the terminology of~\cite{KKMAsymptotic}.  This means
in our context that the locus of common zeros of $s$ and $t$ is
precisely $D$; hence $s,t$ generate the ideal $I_D \subset \Raff$.  In
particular, $W^{d+g+1}_D$, which has degree~$g+1$, is base point free.
\item
We have that $\deg s = d+g$, $\deg t = d+g+1$, and $W^{d+g+1}_D$ is
$2$-dimensional, with basis $\{s,t\}$; any other choice of
$s',t' \in W^{d+g+1}_D$ with the same degrees is also a basis of that
space, and satisfies
$s' W^{2g} + t' W^{2g-1} + W^{d+g-1} = W^{d + 3g}$.
\end{enumerate}
\end{proposition}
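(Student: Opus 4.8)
The plan is to deduce all three parts from dimension counting, using only~\eqref{equation2.4}, the identity $\dim_\fieldK \Raff/I_D = \deg D$, and the behaviour of the quotient map $\pi\colon \Raff \to \Raff/I_D$ on the Riemann--Roch spaces $W^N$. Since $d \geq g \geq 1$, each of $d+3g$, $2g$, $2g-1$, $d+g-1$ is large enough for~\eqref{equation2.4} to apply, giving $\dim W^{d+3g} = d+2g+1$, $\dim W^{2g}=g+1$, $\dim W^{2g-1}=g$, and $\dim W^{d+g-1}=d$. Because $\Raff$ is a domain, $\dim sW^{2g}\leq g+1$ and $\dim tW^{2g-1}\leq g$, with equality exactly when $s\neq 0$, resp.\ $t\neq 0$; and if either vanished the three summands would span a space of dimension at most $d+g+1<d+2g+1$. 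Hence $s,t\neq 0$, the three summands have dimensions $g+1$, $g$, $d$ summing to exactly $\dim W^{d+3g}$, and the spanning hypothesis forces the sum to be direct --- this is the first assertion of~(1).

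Next I would apply $\pi$ to the defining relation. Since $s,t\in I_D$, $\pi$ kills $sW^{2g}$ and $tW^{2g-1}$, so $\pi(W^{d+g-1})=\pi(W^{d+3g})$, and the latter is all of $\Raff/I_D$ because $d+3g$ lies in the nonspecial range for $D$. Thus $\pi$ maps the $d$-dimensional space $W^{d+g-1}$ onto the $d$-dimensional space $\Raff/I_D$, hence isomorphically, so $W^{d+g-1}_D=\ker(\pi|_{W^{d+g-1}})=0$; this is part~(2). For the rest of~(1): $s$ and $t$ vanish on $D$, so $sW^{2g}\directsum tW^{2g-1}\subseteq W^{d+3g}_D$, and hence $W^{d+3g}_D+W^{d+g-1}=W^{d+3g}$; as the intersection of the two latter spaces is $W^{d+g-1}_D=0$, this sum is direct and $\dim W^{d+3g}_D=(d+2g+1)-d=2g+1=\dim(sW^{2g}\directsum tW^{2g-1})$, forcing equality of these two spaces.

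For~(3), the degrees and the basis come quickly. A nonzero $s\in I_D$ with $\deg s\leq d+g-1$ would lie in $W^{d+g-1}_D=0$, so $\deg s=d+g$. Since $\pi$ is already onto on $W^{d+g-1}$, it is onto on $W^{d+g}$ and $W^{d+g+1}$ as well, so $\dim W^{d+g}_D=1$ and $\dim W^{d+g+1}_D=2$; in particular $W^{d+g}_D=\fieldK s$. If $\deg t\leq d+g$ then $t\in W^{d+g}_D=\fieldK s$, giving $tW^{2g-1}\subseteq sW^{2g}$ and collapsing the dimension count, so $\deg t=d+g+1$, and $\{s,t\}$ (nonzero, of distinct degrees) is a basis of the $2$-dimensional $W^{d+g+1}_D$. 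Given another pair $s',t'\in W^{d+g+1}_D$ of degrees $d+g$ and $d+g+1$, we get $s'\in W^{d+g}_D=\fieldK s$, so $s'=\mu s$ with $\mu\neq 0$, and writing $t'=\alpha s+\beta t$ the degree condition forces $\beta\neq 0$, so $\{s',t'\}$ is again a basis; then $s'W^{2g}=sW^{2g}$ and, modulo $sW^{2g}$, the term $\alpha sW^{2g-1}$ disappears while $\beta tW^{2g-1}=tW^{2g-1}$, so $s'W^{2g}+t'W^{2g-1}+W^{d+g-1}=sW^{2g}+tW^{2g-1}+W^{d+g-1}=W^{d+3g}$.

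The IGS claim is the step I expect to be genuinely delicate, since it is not a dimension count inside the spaces attached to $D$ alone. Write $\Divisor s=D+A$ and $\Divisor t=D+B$ with $A,B$ effective of degrees $g$ and $g+1$, and suppose for contradiction that $E:=\gcd(A,B)\neq 0$; then $s,t\in I_{D+E}$ and $\deg E\leq g$. Applying the quotient map $\pi'\colon\Raff\to\Raff/I_{D+E}$ to the spanning relation again annihilates $sW^{2g}$ and $tW^{2g-1}$, so $\pi'(W^{d+g-1})=\pi'(W^{d+3g})$; but $d+3g\geq\deg(D+E)+2g-1$ because $\deg E\leq g$, so $\pi'(W^{d+3g})$ is all of $\Raff/I_{D+E}$, a space of dimension $d+\deg E>d=\dim W^{d+g-1}$, which is impossible. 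Hence $E=0$: the common zero locus of $s$ and $t$ is exactly $D$, so $\{s,t\}$ is an IGS for $D$ and $s,t$ generate $I_D$. This completes the proof.
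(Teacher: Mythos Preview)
Your proof is correct. Parts (1), (2), and the degree/basis assertions in (3) proceed essentially as in the paper: both arguments rest on the same dimension count, with only cosmetic differences in ordering (you prove semi-typicality via surjectivity of $\pi|_{W^{d+g-1}}$ onto $\Raff/I_D$, while the paper deduces it from $W^{d+g-1}\cap W^{d+3g}_D=0$; these are the same fact viewed from opposite ends of the short exact sequence).

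The IGS argument is where you genuinely diverge. The paper observes that the line bundle $\mathcal{O}_C((d+3g)\Pinf-D)$ has degree $3g\geq 2g$ and is therefore base-point-free; since $sW^{2g}\directsum tW^{2g-1}=W^{d+3g}_D$ spans the global sections, $s$ and $t$ can have no common zero outside $D$. Your route is instead to suppose $E=\gcd(A,B)\neq 0$, apply the quotient map $\pi'\colon\Raff\to\Raff/I_{D+E}$ to the spanning relation, and derive a contradiction from $\dim\pi'(W^{d+g-1})\leq d<d+\deg E=\dim\Raff/I_{D+E}$. This is a nice self-contained replacement: it avoids citing base-point-freeness as a black box, and in effect re-proves the needed special case directly from the typicality hypothesis and the nonspeciality bound $\deg((d+3g)\Pinf-(D+E))=3g-\deg E\geq 2g>2g-1$. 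The paper's version is shorter if one is willing to invoke the standard fact; yours is more elementary and stays entirely within the dimension-counting framework you set up at the start.
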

\begin{proof}
The fact that the first sum is direct follows by counting dimensions in the
equality $s W^{2g} + t W^{2g-1} + W^{d+g-1} = W^{d + 3g}$.  For
example, once $s \neq 0$, we have $\dim s W^{2g} = \dim W^{2g} = g+1$,
and similarly $\dim t W^{2g-1} = g$, $\dim W^{d+g-1} = d$, and
$\dim W^{d + 3g} = d + 2g + 1$.  This implies in particular that $s,t
\neq 0$, and in fact that $s$ and $t$ are $\fieldK$-linearly
independent, since $s \in sW^{2g}$ and $t \in tW^{2g-1}$. 
By counting dimensions again, the inclusion
$s W^{2g} + t W^{2g-1} (= s W^{2g} \directsum t W^{2g-1})
     \subset W^{d+3g}_D$ must be an equality.
Now directness of the first sum implies that the intersection
$W^{d+g-1} \intersect (s W^{2g} + t W^{2g-1})$ must be zero, which can
be rewritten as $0 = W^{d+g-1} \intersect W^{d+3g}_D = W^{d+g-1}_D$,
so $D$ is semi-typical.

Since $W^{d+3g}_D$ is base point free, we deduce from
$s W^{2g} + t W^{2g-1} = W^{d+3g}_D$ that $\{s,t\}$ is an IGS
for $D$.  Also, $s$ and $t$ are linearly independent, so
$\dim W^{d+g+1}_D \geq 2$.  However, the inclusions
$0 = W^{d+g-1}_D \subset W^{d+g}_D \subset W^{d+g+1}_D$ are all of
codimension $\leq 1$ (they differ by the vanishing of one coefficient
in the Laurent expansion at $\Pinf$); hence $s$ and $t$ are indeed a
basis of $W^{d+g+1}_D$, and their degrees must
be as claimed.  Any other choice of $s',t'$ is of the form $s' = as,
t' = a't + bs$, with $a,a',b \in \fieldK$ and $a,a' \neq 0$;
the last assertion follows easily.
\end{proof}

The next two propositions describe direct sum decompositions that
occur in higher degrees, akin to the decompositions 
defining semi-typical and typical divisors.  These propositions
essentially mean that semi-typicality describes the initial ideal of
$I_D$ (in the sense of Gr\"obner bases) with respect to the term order
induced from the degree (equivalently, $\vinf$), while typicality
gives more precise information about the generators $\{s,t\}$ of $I_D$
and the relations between them.  This information is related to the
initial part of the module of first syzygies, since it describes how
both generators and relations of $I_D$ interact with the valuation at
$\Pinf$.

\begin{proposition}
\label{proposition2.6}
Let $D$ be a semi-typical divisor.  Then:
\begin{enumerate}
\item 
For all $j \geq -1$, the divisor $(d+g+j)\Pinf - D$ is nonspecial
(i.e., has trivial $H^1$).
Equivalently, by Riemann-Roch, we have $\dim W^{d+g+j}_D = j+1$.
\item
For all $j \geq -1$, we have a direct sum 
$W^{d+g+j}_D \directsum W^{d+g-1} = W^{d+g+j}$.
\end{enumerate}
Conversely, let $D$ be a good divisor with
$\deg D = d \geq g$, and suppose we know for a single
value of  $j \geq g-1$ that $W^{d+g+j}_D + W^{d+g-1} = W^{d+g+j}$
(without necessarily knowing that the sum is direct).  Then $D$ is in
fact semi-typical, and statement~(2) holds for all $j \geq -1$.
\end{proposition}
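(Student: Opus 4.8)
The plan is to reduce everything to Riemann--Roch together with the two standard facts recalled just before~\eqref{equation2.4}: that adding an effective divisor to a nonspecial divisor keeps it nonspecial, and that every divisor of degree $\geq 2g-1$ is nonspecial. Throughout I would set $D_j = (d+g+j)\Pinf - D$, a divisor of degree $g+j$, so that $\dim W^{d+g+j}_D = h^0(\mathcal{O}_C(D_j))$.

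First I would prove part~(1). The hypothesis $W^{d+g-1}_D = 0$ says exactly that $h^0(\mathcal{O}_C(D_{-1})) = 0$; since $\deg D_{-1} = g-1$, Riemann--Roch gives $h^1(\mathcal{O}_C(D_{-1})) = h^0(\mathcal{O}_C(D_{-1})) = 0$, so $D_{-1}$ is nonspecial. For $j \geq 0$ we have $D_j = D_{-1} + (j+1)\Pinf$, hence $D_j$ is nonspecial by the first standard fact. Riemann--Roch then yields $\dim W^{d+g+j}_D = h^0(\mathcal{O}_C(D_j)) = (g+j) - g + 1 = j+1$ for all $j \geq -1$.

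For part~(2) I would note that $d \geq g$ forces $d+g+j \geq 2g-1$ for every $j \geq -1$, so the first line of~\eqref{equation2.4} gives $\dim W^{d+g+j} = d+j+1$, and in particular $\dim W^{d+g-1} = d$. Both $W^{d+g+j}_D$ and $W^{d+g-1}$ lie in $W^{d+g+j}$, and since $W^{d+g-1} \subseteq W^{d+g+j}$ their intersection is $W^{d+g+j} \intersect I_D \intersect W^{d+g-1} = W^{d+g-1} \intersect I_D = W^{d+g-1}_D = 0$. Thus the sum $W^{d+g+j}_D + W^{d+g-1}$ is direct of dimension $(j+1)+d = \dim W^{d+g+j}$, so it fills up $W^{d+g+j}$, which is part~(2).

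For the partial converse, the key point --- and the one place the argument is not completely automatic --- is that the hypothesis $j \geq g-1$ is precisely what forces $\deg D_j = g+j \geq 2g-1$, so that $D_j$ is nonspecial by the \emph{second} standard fact, with no assumption on $D$ beyond $\deg D = d \geq g$; hence $\dim W^{d+g+j}_D = j+1$ unconditionally. As above $\dim W^{d+g+j} = d+j+1$ and $\dim W^{d+g-1} = d$, so the assumption $W^{d+g+j}_D + W^{d+g-1} = W^{d+g+j}$ forces, via the dimension formula for the sum of two subspaces, $\dim(W^{d+g+j}_D \intersect W^{d+g-1}) = (j+1)+d-(d+j+1) = 0$; and that intersection is $W^{d+g-1}_D$ exactly as before, so $D$ is semi-typical and parts~(1) and~(2) now apply. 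Beyond keeping the degree inequalities straight and remembering that the relevant intersection is always $W^{d+g-1}_D$, I do not anticipate any real obstacle.
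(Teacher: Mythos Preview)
Your proof is correct and follows essentially the same approach as the paper's own proof: both deduce nonspeciality of $D_{-1}=(d+g-1)\Pinf-D$ from $W^{d+g-1}_D=0$ and propagate it to all $j\geq -1$ by adding multiples of $\Pinf$, then obtain part~(2) from the intersection $W^{d+g+j}_D\cap W^{d+g-1}=W^{d+g-1}_D=0$ plus a dimension count, and finally run the same dimension argument in reverse for the partial converse, using $j\geq g-1$ precisely to force $\dim W^{d+g+j}_D=j+1$ unconditionally. Your write-up is simply a bit more explicit than the paper's in spelling out the Riemann--Roch computations.
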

\begin{proof}
The statement on dimensions in part (1) is true for $j = -1$ by
assumption.  Thus $D' = (d+g-1)\Pinf - D$ is nonspecial.  Adding
a positive multiple of $\Pinf$ preserves the property of being
nonspecial, so we obtain part (1) for all $j$.  Part (2) follows
by counting dimensions as usual, since 
$W^{d+g+j}_D \intersect W^{d+g-1} = W^{d+g-1}_D = 0$.

As for the converse, the inequality $j \geq g-1$ ensures
that $\dim W^{d+g+j}_D = j+1$, while $\dim W^{d+g-1} = d$ and
$\dim W^{d+g+j} = d + j + 1$.  Thus the only way the inclusion
$W^{d+g+j}_D + W^{d+g-1} \subset W^{d+g+j}$ can be an equality is if
$W^{d+g+j}_D \intersect W^{d+g-1} = 0$, which implies that $D$ is
semi-typical.
\end{proof}

\begin{proposition}
\label{proposition2.7}
Let $D$ be a typical divisor, with $s,t$ as in the definition.  Then
for all $i \geq 0$, we have the direct sums
\begin{equation}
\label{equation2.7}
 s W^{2g} \directsum t W^{2g-1+i} \directsum W^{d+g-1} = W^{d + 3g + i},
\end{equation}
\begin{equation}
\label{equation2.8}
 s W^{2g+i} \directsum t W^{2g-1} \directsum W^{d+g-1} = W^{d + 3g + i},
\end{equation}
\begin{equation}
\label{equation2.9}
    s W^{2g} \directsum t W^{2g-1+i}
  = s W^{2g+i} \directsum t W^{2g-1} 
  = W^{d + 3g + i}_D.
\end{equation}
Conversely, given a good divisor $D$ with $\deg D = d \geq g$,
and elements $s \in W^{d+g}_D, t \in W^{d+g+1}_D$ such that one of
\eqref{equation2.7} or~\eqref{equation2.8} is satisfied (just with
a sum --- not necessarily with a direct sum) for a single
value of $i \geq 0$, then $D$ is typical; hence $\{s,t\}$ is an IGS for
$D$, and equations~\eqref{equation2.7}--\eqref{equation2.9} hold for all
$i$.
\end{proposition}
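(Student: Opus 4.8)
\emph{Overview.} The plan is to handle the two directions separately. The forward implication is a base-point-free-pencil-trick computation married to a dimension count, and the converse is essentially pure dimension counting, bootstrapped through the partial converse in Proposition~\ref{proposition2.6}.

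\emph{Forward direction.} Assume $D$ is typical, with $s,t$ as in the definition. First I would invoke Proposition~\ref{proposition2.5}: $D$ is semi-typical, $\{s,t\}$ is an IGS for $D$, $\deg s = d+g$, $\deg t = d+g+1$, and $sW^{2g}\directsum tW^{2g-1} = W^{d+3g}_D$. Write $\Divisor s = D+A$ and $\Divisor t = D+B$ with $A,B$ effective of degrees $g$ and $g+1$; since $\langle s,t\rangle = I_D$, the divisors $A$ and $B$ must be disjoint. Dividing by $s$ (resp.\ $t$) identifies $sW^{2g}$ with $W^{d+3g}_{D+A}$, identifies $tW^{2g-1+i}$ with $W^{d+3g+i}_{D+B}$, and likewise identifies $sW^{2g+i}$ with $W^{d+3g+i}_{D+A}$. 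Using disjointness of $A$ and $B$ to merge vanishing conditions, and tracking the pole order at $\Pinf$, I would then establish the key identity
\begin{equation*}
sW^{2g}\intersect tW^{2g-1+i} \;=\; sW^{2g+i}\intersect tW^{2g-1} \;=\; W^{d+3g}_{D+A+B},
\end{equation*}
which is \emph{independent of $i$}. Specializing to $i=0$ and using directness of $sW^{2g}\directsum tW^{2g-1}$ shows this common space is $0$ --- this is precisely where typicality does its work, replacing the nonspecialness argument used for generic divisors in the proof of Proposition~\ref{proposition2.2}. Since $D$ is semi-typical, Proposition~\ref{proposition2.6} gives $\dim W^{d+3g+i}_D = 2g+i+1$, while injectivity of multiplication by the nonzero $s,t$ gives $\dim sW^{2g}=g+1$, $\dim tW^{2g-1+i}=g+i$, $\dim sW^{2g+i}=g+1+i$, $\dim tW^{2g-1}=g$. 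As $sW^{2g}+tW^{2g-1+i}\subseteq W^{d+3g+i}_D$ and the two summands meet in $0$, a dimension count forces equality; the same applies to $sW^{2g+i}+tW^{2g-1}$, giving~\eqref{equation2.9}. Adjoining $W^{d+g-1}$ and using part~(2) of Proposition~\ref{proposition2.6} (with $j=2g+i$) then yields~\eqref{equation2.7} and~\eqref{equation2.8}.

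\emph{Converse.} Suppose $\deg D=d\geq g$ and, say, $sW^{2g}+tW^{2g-1+i}+W^{d+g-1}=W^{d+3g+i}$ for a single $i\geq 0$; the case of~\eqref{equation2.8} is symmetric (interchange the two chains, using $W^{2g}\subseteq W^{2g+i}$). By~\eqref{equation2.4}, $\dim W^{2g}=g+1$, $\dim W^{2g-1+i}=g+i$, and $\dim W^{d+g-1}=d$ (this last because $d\geq g$), so the three summands have total dimension at most $d+2g+i+1=\dim W^{d+3g+i}$. The assumed equality therefore forces each inequality to be an equality --- in particular $s\neq 0$ and $t\neq 0$ --- and forces the sum to be direct. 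Since $sW^{2g}+tW^{2g-1+i}\subseteq W^{d+3g+i}_D$, it follows that $W^{d+3g+i}_D+W^{d+g-1}=W^{d+3g+i}$, and as $2g+i\geq g-1$ the partial converse in Proposition~\ref{proposition2.6} now yields that $D$ is semi-typical; only at this point are the sharp dimension formulas available. Directness also gives $sW^{2g}\intersect tW^{2g-1+i}=0$, hence $sW^{2g}\intersect tW^{2g-1}=0$; combined with $\dim sW^{2g}+\dim tW^{2g-1}=2g+1=\dim W^{d+3g}_D$ and $sW^{2g}+tW^{2g-1}\subseteq W^{d+3g}_D$, this gives $sW^{2g}\directsum tW^{2g-1}=W^{d+3g}_D$. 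Adjoining $W^{d+g-1}$ and applying Proposition~\ref{proposition2.6}(2) once more produces $sW^{2g}+tW^{2g-1}+W^{d+g-1}=W^{d+3g}$, so $D$ is typical with $s,t$ as witnesses; the IGS assertion is then part~(3) of Proposition~\ref{proposition2.5}.

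\emph{Main obstacle.} The one genuinely delicate step is the $i$-independence of $sW^{2g}\intersect tW^{2g-1+i}$: it rests on the disjointness of $A$ and $B$ (i.e.\ on the IGS property) together with a careful comparison of pole orders at $\Pinf$, and it is exactly what lets a single instance of~\eqref{equation2.7} or~\eqref{equation2.8} propagate to all $i\geq 0$ and down to $i=0$. In the converse, the subtlety is one of sequencing: directness must be squeezed out of the bare dimension identity \emph{before} invoking the refined dimension formulas of Proposition~\ref{proposition2.6}, since those presuppose the semi-typicality that is only being established en route.
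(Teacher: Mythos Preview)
Your proposal is correct. The main difference from the paper's proof is in how you show the intersections $sW^{2g} \intersect tW^{2g-1+i}$ and $sW^{2g+i} \intersect tW^{2g-1}$ vanish in the forward direction. You identify both with $W^{d+3g}_{D+A+B}$ via the divisor decompositions $\Divisor s = D+A$, $\Divisor t = D+B$ and the disjointness of $A$ and $B$, recycling the base-point-free-pencil-trick computation from Proposition~\ref{proposition2.2}, and then observe this space is $i$-independent and vanishes at $i=0$. The paper instead uses a bare degree bound: if $u \in W^{2g+i}$ satisfies $su \in tW^{2g-1} \subset W^{d+3g}$, then $\deg u \leq 2g$, so $u \in W^{2g}$, and one is reduced to the $i=0$ case already handled by Proposition~\ref{proposition2.5}. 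The paper's route is shorter and never needs to invoke the IGS/disjointness structure; yours makes the underlying geometry more visible and mirrors the generic case more closely. For the converse, both arguments are dimension counts; the paper goes directly from directness of the triple sum to directness of the subspace sum $sW^{2g} \directsum tW^{2g-1} \directsum W^{d+g-1} \subset W^{d+3g}$ and then matches dimensions, which is the definition of typicality. Your detour through semi-typicality via Proposition~\ref{proposition2.6} is correct but unnecessary.
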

\begin{proof}
We know that $D$ is also semi-typical, so from part (2) of
Proposition~\ref{proposition2.6} (with $j=2g+i$) it is sufficient to
show~\eqref{equation2.9}.  We have inclusions
$s W^{2g} + t W^{2g-1+i} \subset W^{d+3g+i}_D$ and
$s W^{2g+i} + t W^{2g-1} \subset W^{d+3g+i}_D$.  The dimensions match
up correctly as though we had equality with direct sums.  To prove
equality, we need to show that $s W^{2g} \intersect t W^{2g-1+i}
= s W^{2g+i} \intersect t W^{2g-1} = 0$.  The proofs are similar, so
we will only show that the second intersection is zero.  If $u \in
W^{2g+i}$ satisfies $su \in t W^{2g-1} \subset W^{d+3g}$, then
$\deg s + \deg u \leq d+3g$.  Since $\deg s = d+g$, we
deduce that $\deg u \leq 2g$, so in fact
$su \in s W^{2g} \intersect t W^{2g-1}$, which we know to be zero
from Proposition~\ref{proposition2.5}.

Let us now prove the converse statement.  Suppose for one $i$
that~\eqref{equation2.7}, say, holds (the proof in the case
of~\eqref{equation2.8} is similar).  Then, by counting dimensions as
in part (1) of the proof of Proposition~\ref{proposition2.5}, we
obtain that the sum is direct, and that any sum made from subspaces of
$s W^{2g}$, $t W^{2g-1+i}$, and $W^{d+g-1}$ will remain direct.  Thus
$sW^{2g} + tW^{2g-1} + W^{d+g-1}
   = sW^{2g} \directsum tW^{2g-1} \directsum W^{d+g-1}
 \subset W^{d+3g}$,
and we have equality by comparing dimensions.  Thus $D$ is typical.
\end{proof}

\begin{corollary}
\label{corollary2.8}
Whether a good divisor $D$ with $\deg D = d \geq g$
is typical or semi-typical depends only on the divisor class
$[D - d \Pinf] \in \Pic^0(C)$; in particular, it depends only on the class
$[D] \in \Pic^d C$.
\end{corollary}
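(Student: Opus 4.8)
The plan is to compare two good divisors directly: it suffices to show that if $D,D'$ are good divisors with $\deg D = d \geq g$, $\deg D' = d' \geq g$, and the classes of $D - d\Pinf$ and $D' - d'\Pinf$ in $\Pic^0(C)$ coincide, then $D$ is typical (respectively semi-typical) if and only if $D'$ is. By symmetry we may assume $n := d'-d \geq 0$. Equality of the classes means precisely that the degree-zero divisor $D' - D - n\Pinf$ is principal, so we may fix $\psi \in \fieldK(C)^{\times}$ whose divisor (on all of $C$) is $D' - D - n\Pinf$; in particular $\vinf(\psi) = -n$, and $\psi$ may be taken $\fieldK$-rational. Then, for every $N \geq 0$, multiplication by $\psi$ carries $W^N_D = H^0(C,\mathcal{O}_C(N\Pinf - D))$ isomorphically onto $H^0(C,\mathcal{O}_C((N+n)\Pinf - D')) = W^{N+n}_{D'}$, with inverse multiplication by $\psi^{-1}$. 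These isomorphisms shift degrees by $n$, since $\deg(\psi f) = -\vinf(\psi f) = \deg f + n$, and they respect multiplication by the $W^M$: for $f \in W^N_D$ and $h \in W^M$ we have $\psi(fh) = (\psi f)h \in W^{N+M+n}_{D'}$. Hence $\psi$ takes $sW^{2g}$ to $(\psi s)W^{2g}$, takes $tW^{2g-1}$ to $(\psi t)W^{2g-1}$, and so on.

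Applying this with $N = d+g-1$ (so $N+n = d'+g-1$) gives $W^{d+g-1}_D = 0$ if and only if $W^{d'+g-1}_{D'} = 0$, which settles the semi-typical case. For typicality, I first reformulate Definition~\ref{definition2.4}(1) so as to avoid the ``ambient'' subspace $W^{d+g-1}$, which, unlike the $W^N_D$, is not carried to its analogue by $\psi$. Assume $D$ is semi-typical. By Proposition~\ref{proposition2.6}(1), $W^{d+g}_D$ is one-dimensional; let $s$ span it (so $\deg s = d+g$), and let $t$ be any element of $W^{d+g+1}_D$ of degree $d+g+1$, so that $\{s,t\}$ is a basis of $W^{d+g+1}_D$. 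Replacing $t$ by $at+bs$ with $a\neq 0$ does not change $sW^{2g}+tW^{2g-1}$ (since $(at+bs)W^{2g-1}\subseteq tW^{2g-1}+sW^{2g}$, and symmetrically), so this subspace depends only on $D$. I claim that, for semi-typical $D$, the divisor $D$ is typical if and only if $sW^{2g}+tW^{2g-1} = W^{d+3g}_D$. Indeed, if this equality holds, then adding $W^{d+g-1}$ and invoking the direct sum $W^{d+3g}_D \directsum W^{d+g-1} = W^{d+3g}$ of Proposition~\ref{proposition2.6}(2) (with $j=2g$) gives $sW^{2g}+tW^{2g-1}+W^{d+g-1} = W^{d+3g}$, which is Definition~\ref{definition2.4}(1). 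Conversely, if $D$ is typical, then Proposition~\ref{proposition2.5} forces $s,t$ to have the above degrees and gives $sW^{2g}\directsum tW^{2g-1} = W^{d+3g}_D$; since the subspace $sW^{2g}+tW^{2g-1}$ is independent of the admissible choices of $s$ (unique up to scalar) and $t$, the claimed equivalence follows.

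Given this, the corollary drops out. Suppose $D$ is typical; then $D$ is semi-typical by Proposition~\ref{proposition2.5}(2), hence so is $D'$ by the previous paragraph. Pick $s$ spanning $W^{d+g}_D$ and $t\in W^{d+g+1}_D$ of degree $d+g+1$ with $sW^{2g}+tW^{2g-1}=W^{d+3g}_D$, and set $s'=\psi s$, $t'=\psi t$. Then $s'$ spans the one-dimensional space $W^{d'+g}_{D'}$, $t'\in W^{d'+g+1}_{D'}$ has degree $d'+g+1$, and applying $\psi$ to $sW^{2g}+tW^{2g-1}=W^{d+3g}_D$ (using $d+3g+n = d'+3g$) yields $s'W^{2g}+t'W^{2g-1}=W^{d'+3g}_{D'}$. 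By the reformulated criterion applied to $D'$, the divisor $D'$ is typical. The reverse implication is the same argument with $\psi^{-1}$ in place of $\psi$.

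I expect the one real subtlety to be the reformulation in the second paragraph. The term $W^{d+g-1}$ in the definition of ``typical'' lives in the ambient coordinate ring $\Raff$ rather than in $I_D$, and multiplication by $\psi$ does not send it to $W^{d'+g-1}$; one must peel it off — using semi-typicality together with Proposition~\ref{proposition2.6}(2) — before the twist by $\psi$ can be applied. The remaining ingredients (the properties of $\psi$ and the dimension bookkeeping) are routine.
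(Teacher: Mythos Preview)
Your proof is correct and follows essentially the same approach as the paper: both introduce the rational function $\psi$ (called $u$ in the paper) implementing the isomorphism $W^{d+j}_D \isomorphic W^{d'+j}_{D'}$, handle the semi-typical case immediately, and then for typicality first obtain $s'W^{2g}+t'W^{2g-1}=W^{d'+3g}_{D'}$ and recover Definition~\ref{definition2.4}(1) by adding $W^{d'+g-1}$ via Proposition~\ref{proposition2.6}(2). The only cosmetic difference is that you isolate the intermediate criterion ``$D$ is typical $\Leftrightarrow$ $sW^{2g}+tW^{2g-1}=W^{d+3g}_D$ (for semi-typical $D$)'' as an explicit claim and verify that the left-hand side is independent of the choice of $t$, whereas the paper applies this step inline without naming it.
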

\begin{proof}
Suppose $D'$ is another good divisor of degree
$d' \geq g$ that maps to the same element of $\Pic^0(C)$.  This means that
there exists a nonzero element $u \in \fieldK(C)$ of the function field
whose full divisor (including $\Pinf$) is
$\Divisor u = D' - D + (d-d')\Pinf$.  It then follows that 
$W^{d'+j}_{D'} = uW^{d+j}_D$ for all $j$, so (taking $j=g-1$) we see that
$D'$ is semi-typical if and only if $D$ is. 

Now suppose that $D$ is typical, and let $s,t$ be as in the definition.
Define $s' = us \in W^{d'+g}_{D'}$ and $t' = ut \in W^{d'+g+1}_{D'}$.  We
have $s' W^{2g} + t' W^{2g-1} = u W^{d+3g}_D$ by
Proposition~\ref{proposition2.5}, and this last space is equal to
$W^{d'+3g}_{D'}$.  We also know that $D'$ is semi-typical,
by the preceding paragraph, so $W^{d'+3g}_{D'}+W^{d'+g-1} = W^{d'+3g}$ by
Proposition~\ref{proposition2.6}.  Hence $s' W^{2g} + t' W^{2g-1} +
W^{d'+g-1} = W^{d'+3g}$, so $D'$ is also typical, as desired.
\end{proof}

We mention for completeness a characterization of typical
divisors, phrased in terms of the degree zero line bundle
$\LL = \OC(d\Pinf-D)$.  The following is Proposition~3.2
of~\cite{KKMbrillnoether}, and is proved by techniques similar to those of
Proposition~\ref{proposition2.8.4} in this article.
Note that the first two conditions below essentially say that
$\LL$ and $\LL^{-1}$ are semi-typical.

\begin{proposition}
\label{proposition2.8.1}
With the above notation, $D$ is typical if and only if the following three
conditions hold:
(i) $H^0(C, \LL((g-1)\Pinf)) = 0$,
(ii) $H^0(C, \LL^{-1}((g-1) \Pinf)) = 0$, and
(iii) $\LL((g+1)\Pinf)$ is base point free.
\end{proposition}

We now turn to the sense in which ``most'' divisors are typical (hence
also semi-typical).  More precisely, we consider divisor classes
(equivalently, isomorphism classes of line bundles) in $\Pic^d C$; due
to the presence of the rational point $\Pinf$, we can identify $\Pic^d C$
with $\Pic^0 C$, and hence with the $g$-dimensional Jacobian variety of
$C$.  In this setting, a statement about ``most'' divisor classes or
line bundles will mean a statement that holds for all elements of
$\Pic^d C$ outside a finite union of ``bad'' subvarieties of dimension
$\leq g-1$.  This can all be studied over $\overline{\fieldK}$,
without changing the dimension of the appropriate bad subvarieties.

Moreover, at the expense of eliminating a few more subvarieties of
dimension $g-1$, we can restrict, in the case $d \geq g$, to classes of
good divisors.  The way to see this is to fix a good divisor $E$, with
$\deg E = d-g$, and then to represent any divisor class, identified
with a line bundle $\LL$ of degree $d$, by an effective divisor 
$D' + E$, with $\deg D' = g$; this is possible because
$H^0(C,\LL(-E)) \neq 0$.  Then we eliminate from consideration the
divisor classes represented by $E + D'$ as $D'$ varies over divisors
of the form $\Pinf + Q_1 + \cdots + Q_{g-1}$, which describe a
$(g-1)$-dimensional subset of $\Pic^d C$ as the $Q_i$ vary.  Extensions
and variations of this argument give rise to the following results,
which we have referred to earlier, and which we will use to control
the size of the locus of nontypical divisors.

\begin{lemma}
\label{lemma2.8.2}
In the following statements, ``most elements $\LL \in \Pic^d C$''
refers to all but a finite union of at most $(g-1)$-dimensional
subvarieties, as discussed above.
\begin{enumerate}
\item
If $d \leq g-1$, then most elements $\LL \in \Pic^d C$ satisfy
$H^0(C,\LL) = 0$.
\item
Most elements $\LL \in \Pic^{g+1} C$ are base
point free,  with $\dim H^0(C,\LL) = 2$.
\end{enumerate}
\end{lemma}
\begin{proof}
We include the proof of this standard result for completeness.  As
noted above, we can work over $\overline{\fieldK}$.  If $d \leq g-1$,
then an $\LL$ with nonzero $H^0$ must contain an effective divisor in
its class, i.e., $\LL \isomorphic \OC(Q_1 + \cdots + Q_d)$, which
varies in a $d$-dimensional subvariety as the $Q_i$ vary.  This proves
statement~(1) above.

As for statement~(2), it follows from Riemann-Roch that every
$\LL \in \Pic^{g+1} C$ has $\dim H^0(C,\LL) \geq 2$.  If such an $\LL$
is not base point free, there exists $P \in C(\overline{\fieldK})$
such that $\dim H^0(C,\LL(-P)) = \dim H^0(C,\LL) \geq 2$; hence $\dim
H^1(C, \LL(-P)) \geq 1$.  Writing $\omega$ for the 
canonical bundle, we conclude that the degree $g-2$ line bundle 
$\omega \tensor \LL^{-1} (P)$ has nontrivial $H^0$, hence that
$\omega \tensor \LL^{-1} (P) \isomorphic
 \OC(Q_1 + \cdots + Q_{g-2})$.
Hence $\LL \isomorphic \omega(P - Q_1 - \cdots - Q_{g-2})$, and the
family of such $\LL$ has dimension $g-1$ as $P$ and the $Q_i$ vary.
Moreover, if $\dim H^0(C,\LL) \geq 3$, then for every $P \in
C(\overline{\fieldK})$, we again have $\dim H^0(C,\LL(-P)) \geq 2$,
which cannot happen for most $\LL$ (even for one $P$ depending on
$\LL$, as we have just seen).

We note that the above reasoning generalizes to show that if $d \geq
g+1$, then most choices of $\LL \in \Pic^d C$ are base point free, with
$\dim H^0(C, \LL) = d + 1 - g$.
\end{proof}

\begin{proposition}
\label{proposition2.8.4}
Let $d \geq g \geq 1$.  Then most elements $\LL \in \Pic^d C$ are typical
and hence also semi-typical.  The word ``most'' here has the same
meaning as in Lemma~\ref{lemma2.8.2}.
\end{proposition}
\begin{proof}
As remarked earlier, we can restrict to the situation when $\LL = \OC(D)$,
with $D$ a good divisor.  It is convenient to show first that $\LL$ is
semi-typical, even though this is implied by the full result.  Indeed,
as $\LL$ varies in $\Pic^d C$, the line bundle $\OC((d+g-1)\Pinf - D)
= \LL^{-1}((d+g-1)\Pinf)$ varies in $\Pic^{g-1} C$.  Thus (for most
$\LL)$ it has no global sections, by Part~(1) of
Lemma~\ref{lemma2.8.2}, and hence $D$ is semi-typical.

We now use Part~(2) of the above lemma to conclude that, for most $D$,
the space $W^{d+g+1}_D$ is two-dimensional and base point free. Thus
any basis $\{s,t\}$ of $W^{d+g+1}_D$ is an IGS.  We can control the
degrees of $s$ and $t$ so as to obtain $s,t$ as in
Definition~\ref{definition2.4}.  Indeed, we already have $W^{d+g-1}_D = 0$,
so necessarily $\dim W^{d+g}_D = 1$, and we can choose (any nonzero)
$s \in W^{d+g}_D  \subsetneq W^{d+g+1}_D$, and $t \in W^{d+g+1}_D$,
with $t \notin W^{d+g}_D$.  It then follows that $\deg s = d+g$ and
$\deg t = d+g+1$, and we have obtained our desired $\{s,t\}$ which is
an IGS for $W^{d+g+1}_D$.

It is immediate that $sW^{2g} + tW^{2g-1} \subset
W^{d+3g}_D$, and our next goal is to show that the above inclusion is
an equality for most $D$.  This is similar to the proof of
Lemma~4.10 in~\cite{KKMAsymptotic}, and is in essence the base point
free pencil trick.  We have as usual $\dim sW^{2g} = \dim W^{2g} =
g+1$, and similarly $\dim tW^{2g-1} = g$, while $\dim W^{d+3g}_D = 2g
+ 1$; it thus suffices to show that $sW^{2g} \intersect tW^{2g-1} = 0$.
Write $\Divisor s = D + A$ and $\Divisor t = D + B$, where $A$ and $B$
are effective divisors of degrees $g$ and $g+1$, respectively (recall
that $\Divisor s$ and $\Divisor t$ ignore the poles at $\Pinf$).
Since $\{s,t\}$ is an IGS, $A$ and  $B$ are disjoint.  We thus have
$sW^{2g} \intersect tW^{2g-1} = 
W^{d+3g}_{D+A} \intersect W^{d+3g}_{D+B} = W^{d+3g}_{D+A+B}$.  Via
division by the product $st$, which introduces additional poles at
$2D + A + B$, we have that $W^{d+3g}_{D+A+B}$ is isomorphic
to $H^0(\OC((g-d-1)\Pinf + D))$.  But this last space is the divisor
of a line bundle of degree~$g-1$, and hence is zero for most $D$.
Hence $sW^{2g} + tW^{2g-1} = W^{d+3g}_D$, as desired.

At this point, we know that $sW^{2g} + tW^{2g-1} + W^{d+g-1} = 
W^{d + 3g}_D + W^{d+g-1} \subset W^{d + 3g}$, and we wish to show
equality to conclude that $D$ is typical.
As usual, we count dimensions: the space $W^{d+3g}_D$ has codimension $d$
inside $W^{d+3g}$, and $\dim W^{d+g-1} = d$, so it suffices to point
out that $W^{d+3g}_D \intersect W^{d+g-1} = W^{d+g-1}_D$, which is
zero since we already know that $D$ is semi-typical.
\end{proof}


When $\fieldK$ is a finite field with $q$ elements, one can give a precise
quantitative bound of the probability that a random divisor class fails to
be typical or semi-typical.  Qualitatively, the expected probability is
$O(1/q)$ (for fixed $g$), since the nontypical divisors lie on a proper
subvariety, but finding the implied constant takes some work.  The
following result is Theorem~3.3 of~\cite{KKMbrillnoether}.  The proof 
there uses rather different techniques from the ones in this article, based
on bounding the number of points on certain Brill-Noether loci via the Weil
bounds for zeta and L-functions of curves.  The probabilities given below
are very small for the values of $g$ and $q$ one is likely to
encounter in applying the results of this article; see 
Remark~\ref{remark4.2} below. 

\begin{theorem}
\label{theorem2.8.6}
Suppose that $g\geq 2$ and that $\fieldK$ is a finite field with $q$
elements, with $q \geq 16^g$.  Let $\LL$ be a uniformly randomly
chosen element of $\Pic^d C$
(the precise value of $d$ does not matter, since we can always shift by a
multiple of $\Pinf$, as in Corollary~\ref{corollary2.8}).
\begin{enumerate}
\item
  The probability that $\LL$ is not semi-typical is at most $1.7/q$.
\item
  The probability that $\LL$ is not typical is at most
  $(16^g \cdot g + 3.4)/q$.
\item
  The probability that $\LL$ and $\LL^{-1}$ (more accurately,
  replace $\LL^{-1}$ by $\LL^{-1}(N\Pinf)$ for some large $N$) are not
  both typical is at most $(16^g \cdot 2g + 3.4)/q$.
\end{enumerate}
\end{theorem}

\begin{example}
\label{example2.8.8}
We illustrate semi-typical and typical divisors in the setting of $C_{3,4}$
curves, and relate our definitions to the constructions in~\cite{FASKKM}.
Recall that a $C_{3,4}$ curve $C$ has genus $g=3$, and is given by an
affine equation for the open set $C - \{\Pinf\}$ of the form
$f(x,y) = y^3 - x^4 + \sum_{3i+4j < 12} c_{ij} x^i y^j = 0$; here
$\Raff = \fieldK[x,y]/\langle f(x,y) \rangle$, with $\deg x = 3$ and
$\deg y = 4$.  Any given $W^N$ is spanned by the first few monomials
from the ordered list $1, x, y, x^2, xy, y^2, x^3, x^2 y, xy^2, \dots$
of degrees $0, 3, 4, 6, 7, 8, 9, 10, 11, \dots$; the
next monomial, of degree $12$, can be either $y^3$ or $x^4$, and for
each larger $N$ we continue the list at the $N$th step by choosing
once and for all a monomial $x^i y^j$ with $3i + 4j = N$, for
example by limiting to $j \leq 2$.  The resulting monomials
$x^{2+i}, x^{1+i}y, x^i y^2, \dots$ for $i \geq 0$ give elements of
$\Raff$ of all degrees starting with $2g = 6$.
The dimensions of $W^N$ for $N=0,1,2,3,\dots$ are respectively
$1,1,1,2,3,3,4,5,6,7,\dots$; for example, $W^4 = W^5
 = \fieldK \cdot 1 + \fieldK \cdot x + \fieldK \cdot y$, and for 
$N \geq 5 = 2g - 1$, $\dim W^N = N-2$, illustrating the first line
of~\eqref{equation2.4}.

On such a curve, let $D$ be a good divisor of degree~$3$.  In
Proposition~2.1 and Equation~(2) on page~310 of~\cite{FASKKM}, we
asserted that the ideal $I_D$ was ``typically'' 
generated by two elements
$F = x^2 + ay + bx + c \in W^6_D$ and $G = xy + dy + ex + f \in W^7_D$,
which play the roles of $s,t$ in Definition~\ref{definition2.4}.
We also asserted that ``typically'' $a \neq 0$.
Proposition~\ref{proposition2.9} below shows
that the existence of $F,G$ as above, with $a \neq 0$, is precisely
equivalent to having $D$ be typical, according to the definition in this
article.  The 
idea is that with $F,G$ as above, the ideal $I_D$ contains an element 
$H = a^{-1}(yF - xG) = y^2 + \cdots \in W^8_D$.  It follows that 
$I_D$ contains the list of elements $F, G, H, xF, xG, xH, x^2 F,\dots$,
of degrees $6, 7, 8, 9, \dots$, where $6 = d+g$;
moreover, $W^5_D = 0$ (accepting for the moment that such a $D$ is
semi-typical),  while $W^N_D$ for $N \geq 6$ has as a basis the 
first $N-5$ elements of $I_D$ from the above list.
This illustrates the second line of~\eqref{equation2.4}.
\end{example}

Here is our characterization of typical and semi-typical
divisors on $C_{3,4}$ curves.

\begin{proposition}
\label{proposition2.9}
Let $D$ be a good divisor of degree $3$ on a $C_{3,4}$ curve.
\begin{enumerate}
\item
$D$ is semi-typical if and only if there exist elements
$F,G,H \in I_D$ of degrees $6,7,8$.  After rescaling $F,G,H$ by
nonzero elements of $\fieldK$ (to make them ``monic''), and possibly
replacing $G$ by $G-\lambda F$ for some $\lambda \in \fieldK$ (to
eliminate the $x^2$ term), we can assume that 
$F = x^2 + ay + bx + c \in W^6_D$, $G = xy + dy + ex + f \in W^7_D$, and
$H = y^2 + \cdots \in W^8_D$.
\item
$D$ is typical if and only if there exist elements $F,G$ as above,
with $a \ne 0$.  In that case, we can take $H = a^{-1}(yF - xG)$.
\end{enumerate}
\end{proposition}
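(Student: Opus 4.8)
The plan is to bootstrap everything from the general machinery of Section~\ref{section2}, specializing $g = 3$, $d = 3$, so that $d+g = 6$, $d+g+1 = 7$, $d+g-1 = 5$, and $d+3g = 12$, and then translate the abstract vector-space conditions into statements about the explicit monomial bases of $W^N$ recorded in Example~\ref{example2.3}. For part (1), I would argue as follows. First, suppose $D$ is semi-typical, i.e. $W^5_D = 0$. By Proposition~\ref{proposition2.6}(1) with $j = -1, 0, 1$ we get $\dim W^6_D = 1$, $\dim W^7_D = 2$, $\dim W^8_D = 3$. Picking any nonzero $F \in W^6_D$, then extending to a basis $\{F, G\}$ of $W^7_D$, then to a basis $\{F, G, H\}$ of $W^8_D$: since $W^5 = \fieldK\cdot 1 + \fieldK\cdot x + \fieldK\cdot y$ and $W^6 = W^5 + \fieldK\cdot x^2$, $W^7 = W^6 + \fieldK\cdot xy$, $W^8 = W^7 + \fieldK\cdot y^2$, and since $W^5_D = 0$ forces each of these elements to have the maximal possible degree ($\deg F = 6$, $\deg G = 7$, $\deg H = 8$ — using the "triangular" argument that each inclusion $W^{n-1}_D \subset W^n_D$ has codimension $\le 1$, as in Proposition~\ref{proposition2.5}), we can rescale so $F$ is monic in $x^2$, $G$ is monic in $xy$, $H$ is monic in $y^2$, then subtract multiples of lower-degree basis elements from $W^5$ (which lie in $I_D$? no — they need not) — here I must be careful: the normalization "replace $G$ by $G - \lambda F$ to kill the $x^2$ term" stays inside $I_D$ because $F \in I_D$, and this is the only such reduction claimed, so the stated normal forms $F = x^2 + ay+bx+c$, $G = xy + dy+ex+f$, $H = y^2 + \cdots$ are exactly what one obtains. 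Conversely, if such $F, G, H \in I_D$ exist with those leading terms, then $F, G, H$ are linearly independent elements of $W^8_D$ of degrees $6,7,8$, so $\dim W^8_D \ge 3$; but for \emph{any} good divisor of degree $3$ one has $\dim W^8_D \le 8 - 3 - 3 + 1 + (\text{special defect})$, and more simply $\dim W^8_D = \dim W^8 - \dim(\Raff/I_D \text{ image in degree} \le 8)$; the clean way is: $W^5_D \subset W^8_D$ and the quotient has dimension $\le 3$, with equality iff $W^5_D = 0$, and the three degrees $6,7,8$ witnessed by $F, G, H$ already account for a $3$-dimensional jump, forcing $W^5_D = 0$, i.e. $D$ semi-typical. (I would phrase this via Proposition~\ref{proposition2.6}'s partial converse if a cleaner hook is available, but $j = g-1 = 2$ there needs degree $d+g+j = 11$, so instead I think the direct dimension count in degrees $\le 8$ is the right tool.)

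For part (2), one direction is essentially Example~\ref{example2.3}: if $D$ is typical then it is semi-typical (Proposition~\ref{proposition2.5}(2)), so $F, G$ as in part (1) exist, and by Proposition~\ref{proposition2.5}(3) the pair $\{F, G\}$ (which has the required degrees $6, 7$) is a basis of $W^7_D$ and an IGS, satisfying $F W^6 + G W^5 + W^5 = W^{12}$. I then need $a \ne 0$. The idea: if $a = 0$, then $F = x^2 + bx + c$ depends only on $x$, so $yF = x^2 y + \cdots$ has degree $10$, while $xG = x^2 y + \cdots$ also has degree $10$; the combination $yF - xG$ lies in $I_D$ and has degree $\le 9$ (its degree-$10$ terms cancel since both are monic in $x^2 y$), in fact one computes $yF - xG \in W^9_D$. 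But $W^9_D$ should, for a typical (hence semi-typical) $D$, have dimension $9 - 5 - 1 = \dim W^9 - 3 = $ let me recompute: $\dim W^9 = 4$ (from Example~\ref{example2.3}), and $\dim W^9_D = \dim W^9 - 3 = 1$ by semi-typicality (Proposition~\ref{proposition2.6}(1), $j = 0$... no: $d+g+j = 9 \Rightarrow j = 0 \Rightarrow \dim = 1$). The one-dimensional space $W^9_D$ is spanned by $xF$ (degree $9$). So $yF - xG = \mu \cdot xF$ for some $\mu \in \fieldK$; comparing leading behavior ($yF - xG$ has no $x^3$ term when $a = 0$... actually $yF = x^2 y + bxy + cy$ and $xG = x^2 y + dxy + exy + fx$, so $yF - xG = (b - d - e)xy + \cdots - fx + cy$, which has degree $\le 7$!), we find $yF - xG \in W^7_D = \linalgspan\{F, G\}$, a space whose elements have degree $6$ or $7$, but $yF - xG$ has no $x^2$ and no $xy$ term beyond... this forces $yF - xG$ to be a combination with no $x^2$ term, hence a multiple of $G$, hence its $xy$-coefficient $b - d - e$ equals that multiple, and chasing the remaining coefficients yields a contradiction with $\{F, G\}$ being an IGS (the common zero locus would be larger than $D$, since a syzygy of low degree among two degree-$6,7$ generators of a codimension-$3$ ideal cannot exist — more precisely, $yF - xG$ being a $\fieldK$-linear combination of $F, G$ means $(y - \alpha)F = (x - \beta)G + (\text{const})F$ for suitable constants, forcing $F$ and $G$ to share a common factor or the zero locus to drop, contradicting $\dim \Raff/I_D = 3$). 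Conversely, given $F, G$ with $a \ne 0$, I set $H := a^{-1}(yF - xG)$; a direct expansion shows the $x^2 y$ terms cancel and the coefficient of $y^2$ is $a^{-1} \cdot a = 1$ (the $y^2$ comes from $y \cdot ay$ in $yF$), so $H = y^2 + \cdots \in W^8_D$, and now $F, G, H \in I_D$ have degrees $6, 7, 8$, so by part (1) $D$ is semi-typical; then I must upgrade to typical, e.g. by exhibiting $F W^6 + G W^5 + W^5 = W^{12}$ directly via the monomial count carried out in Example~\ref{example2.3} (which only used $a \ne 0$), or equivalently by invoking the converse half of Proposition~\ref{proposition2.7} with $i = 0$ after checking $F W^6 + G W^5 = W^{12}_D$.

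The main obstacle, as I see it, is the "only if" direction of part (2): proving that typicality \emph{forces} $a \ne 0$. The abstract definition of typical gives me the decomposition $F W^6 + G W^5 + W^5 = W^{12}$, and I need to extract from it a contradiction with $a = 0$. The cleanest route is probably: $a = 0$ makes $F = x^2 + bx + c$ reducible-looking (a polynomial purely in $x$), so the ideal $\langle F, G \rangle$ would contain $F$ of degree $6$ whose zero locus (on the affine curve) is a divisor of degree $6$ cut out by $x^2 + bx + c = 0$, and $D$ (degree $3$) would be the "residual" intersection with $G = 0$; but then $I_D \supsetneq \langle F, G\rangle$ unless the geometry is special, and one shows $\dim \Raff / \langle F, G \rangle > 3$ when $a = 0$ by a Gröbner-basis / leading-term count — the initial ideal of $\langle F, G \rangle$ under $\deg$ would be generated by $x^2$ and $xy$ but \emph{not} reach $y^2$ (since the $S$-polynomial $yF - xG$ has low degree and doesn't produce a $y^2$ leading term), so the standard monomials include $1, x, y, y^2$ and possibly more, giving $\dim \ge 4 > 3 = \deg D$, contradiction. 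I would present this leading-term argument carefully, as it is the real content; everything else is bookkeeping with the dimension formulas of \eqref{equation2.4} and Propositions~\ref{proposition2.5}--\ref{proposition2.7}.
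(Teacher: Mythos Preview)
Your outline is largely on the right track, but there are two places where you diverge from the paper in ways worth flagging.

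\medskip
\textbf{Part~(1), converse direction.} You write that Proposition~\ref{proposition2.6}'s partial converse ``needs degree $d+g+j = 11$'' for $j = g-1 = 2$. This is an arithmetic slip: with $d = g = 3$ one has $d+g+j = 3+3+2 = 8$, so the converse applies directly at degree~$8$. This is exactly what the paper does: the elements $1,x,y,F,G,H$ have distinct leading terms $1,x,y,x^2,xy,y^2$ and so form a triangular basis of $W^8$, whence $W^8_D + W^5 = W^8$, and Proposition~\ref{proposition2.6} (converse, $j=2$) gives semi-typicality. Your alternative direct dimension count has a real gap: you argue that $F,G,H$ witness a $3$-dimensional jump $\dim W^8_D - \dim W^5_D = 3$, and claim this ``forces $W^5_D = 0$''. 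It does not; you would still need $\dim W^8_D \le 3$, which is exactly the nonspeciality of $8\Pinf - D$ that you are trying to prove. The triangular-basis route via Proposition~\ref{proposition2.6} sidesteps this.

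\medskip
\textbf{Part~(2).} Your split into two directions works, but the paper handles both at once with a single linear-algebra observation that is considerably cleaner than your Gr\"obner/IGS contradiction. Take the ten elements
\[
1,\; x,\; y,\; F,\; G,\; yF - xG,\; xF,\; xG,\; yG,\; x^2 F
\]
spanning $FW^6 + GW^5 + W^5 \subset W^{12}$. Their leading terms are
\[
1,\; x,\; y,\; x^2,\; xy,\; a\,y^2,\; x^3,\; x^2 y,\; xy^2,\; x^4,
\]
so in the monomial basis of $W^{12}$ the matrix is triangular with all diagonal entries equal to~$1$ except a single~$a$. Hence $FW^6 + GW^5 + W^5 = W^{12}$ if and only if $a \ne 0$, which is precisely the definition of typicality. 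This one computation subsumes your entire ``main obstacle'' paragraph: when $a = 0$ the sum is a proper subspace of $W^{12}$, so $D$ cannot be typical, with no need to analyze syzygies, IGS failure, or $\dim \Raff/\langle F,G\rangle$.
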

\begin{proof}
Semi-typicality implies the existence of $F,G,H$ as above because of
our control over the dimensions of $W^N_D$ in part~(1) of
Proposition~\ref{proposition2.6}.  Conversely, the existence 
of $F,G,H$ ensures that $W^8_D + W^5$ contains elements with leading
terms $y^2, xy, x^2, y, x, 1$, hence a ``triangular'' basis for $W^8$.
We can hence apply the converse statement in
Proposition~\ref{proposition2.6}.

As for typicality, take the following bases for $F W^6$, $G W^5$, and
$W^5$, respectively: 
$\{F, xF, yF, x^2F\}$, $\{G, xG, yG\}$, and $\{1, x, y\}$.
Reordering all these elements and performing a harmless ``elementary
operation'' in linear algebra, we see that the subspace
$F W^6 + G W^5 + W^5 \subset W^{12}$ is spanned by the set of elements 
$\{1, x, y, F, G, (yF-xG), xF, xG, yG, x^2 F\}$.   The leading terms
of these elements are respectively
$1, x, y, x^2, xy, ay^2, x^3, x^2 y, xy^2, x^4$,
and hence our set is ``triangular'' in terms of the obvious basis for
$W^{12}$.  Writing this in matrix form, we obtain a triangular matrix
with almost all diagonal entries equal to~$1$, except for a single
diagonal entry of~$a$ in the column corresponding to 
$yF-xG = ay^2 + \cdots$; thus our set generates all of $W^{12}$ if and
only if $a \neq 0$.  
\end{proof}


\section{Operations on typical and semi-typical divisors}
\label{section3}

We now investigate how typicality and semi-typicality allow us to
describe ``generically correct'' algorithms for operations on
divisors, that succeed, roughly speaking, when the input and/or output
is typical (or sometimes semi-typical), and certify both success and
(semi-)typicality of the input and/or output.  As a running example,
we illustrate our general constructions in the setting of of $C_{3,4}$
curves, thereby making the connection with~\cite{FASKKM}.

We adopt the following conventions in this section:
\begin{enumerate}
\item
All letters describing divisors (such as $D$, $D'$, $E$, etc.) refer to
good divisors, unless otherwise specified;
\item
We will also use the corresponding lowercase letter to refer to the
degree of the divisor, so $\deg D = d$, $\deg D' = d'$, $\deg E = e$,
etc.;
\item
We will always assume that these degrees are $\geq g$, and that
$g \geq 1$.
\end{enumerate}

The context in which we will later use such divisors is that a $D$ of
the above type represents the divisor class
$[D - d \Pinf] \in \Pic^0(C)$.  Conversely, every element of
$\Pic^0(C)$ can be written as $[D - g \Pinf]$ for some effective
$\fieldK$-rational divisor $D$ of degree $g$, which a priori may have
$\Pinf$ in its support, i.e., not be good.  When $D$ is good, it is well
known (and basically tautological) that the choice of such a $D$ is
unique when $D$ is reduced; in Proposition~\ref{proposition3.4.5}, we
recall the definition of a reduced divisor, and later show in
Corollary~\ref{corollary3.5.5} that typical divisors are always
reduced.  Hence 
typical elements of $\Pic^0(C)$ have a unique representation by a good
divisor $D$ of degree~$g$, and we do not need to go through the more
elaborate tests for equality used in the general algorithms
of~\cite{KKM,KKMAsymptotic}.  We still need divisors of degrees
$d \geq g$ to represent various intermediate results in our algorithms,
so we carry out the discussion below for general $d$.


\subsection{Addition of two divisors by taking an intersection}
\label{subsection3.1}

The first question we study concerns taking the intersection of two spaces
$W^N_D$ and $W^N_{D'}$.  
In most cases, one expects $D$ and $D'$ to be disjoint, so the
intersection should be the space $W^N_{D+D'}$, or at least our
calculation should be able to detect when this is indeed the case.
The following result is straightforward.

\begin{proposition}
\label{proposition3.1}
Suppose given spaces $W^N_D$ and $W^N_{D'}$ as above, viewed as subspaces
of $W^N$, and suppose that $N \geq d+d'+g-1$.  Compute the 
intersection $W^N_D \intersect W^N_{D'}$ in a way that simultaneously 
yields the subspace 
$\hat{W} = W^{d+d'+g-1}_D \intersect W^{d+d'+g-1}_{D'}$.
If $\hat{W} = 0$, then in fact $D$ and $D'$ are disjoint, the
intersection above correctly computes $W^N_{D+D'}$, and $D+D'$ is
semi-typical.

Conversely, if $D$ and $D'$ are disjoint, and $D+D'$ is semi-typical,
then the subspace $\hat{W}$ will indeed be zero, and hence
$W^N_D \intersect W^N_{D'} = W^N_{D+D'}$, as desired.
\end{proposition}
\begin{proof}
Write $E = \text{lcm}(D,D')$; thus $W^n_D \intersect W^n_{D'} = W^n_E$ for
all values of $n$, including both $n=N$ and $n=d+d'+g-1$.  The fact that
$\hat{W} = W^{d+d'+g-1}_E = 0$ forces $e = \deg E \geq d+d'$, from
which we deduce 
that $E = D+D'$ and that $D$ and $D'$ are disjoint.  The first result
follows.  As for the converse, disjointness of $D$ and $D'$ means that
$E = D+D'$.  This divisor is semi-typical, so $\hat{W} = 0$.
\end{proof}

We now discuss how one can effectively carry out linear algebra
computations in subspaces of $W^N$, such as computing the intersections in
the above proposition; this generalizes the presentation in~\cite{FASKKM}.
Elements of $W^N$, for sufficiently large $N$, are 
represented as column vectors in $\fieldK^{N+1-g}$ with respect to some
basis of ``monomials'' in $\Raff$, ordered by degree.
A subspace such as $W^N_D \subset W^N$ is represented as a matrix
whose columns form a basis for $W^N_D$.  When possible, we convert the
basis to column-echelon form, so that the columns represent a basis
for $W^N_D$ in order of increasing degree; this is illustrated in
equation~\eqref{equation3.2} in Example~\ref{example3.2} below. 

For computing the intersection, it is useful to set up a specific
isomorphism between the quotient $\Raff/I_D$ (which was called
$\mathcal{A}$ in Section~3 of~\cite{FASKKM}) and the vector space
$W^{d+g-1}$.  Since $D$ is semi-typical, we know that for $N \geq
d+g-1$, the subspace $W^N \subset \Raff$ surjects onto $\Raff/I_D$, 
with kernel $W^N_D$.  Moreover, $W^N = W^N_D \directsum W^{d+g-1}$, so we
can therefore identify $\Raff/I_D$ with $W^{d+g-1}$, and we have a
compatible family of vector space isomorphisms 
\begin{equation}
\label{equation3.1}
   W^N/W^N_D \xrightarrow{\isomorphic} \Raff/I_D \isomorphic W^{d+g-1},
\qquad \text{for all } N \geq d+g-1.
\end{equation}
Concretely, the composition $W^N \to W^N/W^N_D \isomorphic W^{d+g-1}$
amounts to taking elements of $W^N$, viewed as column vectors, and reducing
the columns with respect to the columns of the matrix describing a basis
for $W^N_D$ mentioned above.  This reduces everything to an element of
$W^{d+g-1}$, i.e., to a vector in $\fieldK^d$.

Let us denote by $r: W^N \to W^{d+g-1}$ the resulting reduction map mod $D$.
We can now compute the intersection $W^N_D \intersect W^N_{D'}$ as the
kernel of the composite map $W^N_{D'} \hookrightarrow W^N \xrightarrow{r}
W^{d+g-1}$.  This composite map can be represented by a $d \times
(N-d'-g+1)$ matrix, which we shall call $M$; an equivalent matrix
is called $M'$ in Section~6 of~\cite{FASKKM}.  One can compute $M$ as
the product of the matrix for $r$ by the matrix whose columns give a
basis for $W^N_{D'}$; alternatively, take the matrix for $W^N_{D'}$,
and reduce each column (modulo $W^N_D$) to obtain columns describing
the corresponding images (i.e., the reductions) in $W^{d+g-1}$.

\begin{example}
\label{example3.2}
We illustrate the above in the $C_{3,4}$ case.  The basis of ``monomials''
begins with $1,x,y,x^2,\dots$, as we saw in Example~\ref{example2.8.8}.
Let $D$ be semi-typical of degree $3$, with 
elements $F,G,H$ as in Proposition~\ref{proposition2.9}.  The columns
of the matrix representing $W_D^N$ will then encode the echelon basis
$F,G,H,xF,xG,xH,x^2 F,\dots$.  For example, when $N=10$, then the
basis of $W^{10}_D$ is $\{F,G,H,xF,xG\}$.  Write $F=x^2+ay+bx+c$,
$G=xy+dy+ex+f$, and $H = y^2+pxy+qx^2+ry+sx+t$ for certain
$p,q,\dots,t\in \fieldK$.  (In the typical case, $H = a^{-1}(yF-xG)$,
so one can express $p, q, \dots, t$ in terms of $a,b,\dots,f$.)
We thus obtain the following echelon form matrix which describes $W^{10}_D$:
\begin{equation}
\label{equation3.2}
\begin{pmatrix}
c&f&t&0&0\\  
b&e&s&c&f\\  
a&d&r&0&0\\  
1&0&q&b&e\\  
0&1&p&a&d\\  
0&0&1&0&0\\  
0&0&0&1&0\\  
0&0&0&0&1    
\end{pmatrix},
\qquad
\text{each row representing the coefficient of }
\begin{matrix}
1\\
x\\
y\\
x^2\\
xy\\
y^2\\
x^3\\
x^2 y
\end{matrix}.
\end{equation}
The first three columns of the above matrix describe of course
$W^8_D$. 

(For general curves, with $D$ semi-typical, we would take
generators for $W^N_D$ of degrees $d+g, d+g+1, \dots, N$, provided
$N \geq d+g-1$; the extreme case $N=d+g-1$ would correspond to an
empty matrix.  The columns of the matrix have their lowest nonzero
entries  in rows $d+1, d+2, \dots, N+1-g$, because row $d+1$
corresponds to a degree $d+g$ element in the $(d+1)$-dimensional space
$W^{d+g}$.)

We now illustrate the computation of an intersection as in
Proposition~\ref{proposition3.1}.  We first explain the reduction map
$r: W^{10} \to W^5$, where $d+g-1 = 3+3-1 = 5$ in our setting, and
the $3$-dimensional space $W^5$ (which equals $W^4$ here) has basis
$\{1,x,y\}$.  Given an 
element of $W^{10}$, represented by a column vector $v \in \fieldK^8$,
we obtain its reduction by successively subtracting from $v$ multiples
of the columns of the matrix in~\eqref{equation3.2}, from the rightmost
column to the leftmost, in order to eliminate the lowest entries of
$v$ from the bottom up.  One is left with a reduced vector with only
three possibly nonzero entries at the top.  We identify the corresponding
element of $W^5$ with the column vector in $\fieldK^3$ consisting of these
top three entries.

To compute the intersection $W^{10}_D \intersect W^{10}_{D'}$,
suppose given the analogous matrix for $W^{10}_{D'}$, in terms of the
coefficients of $F' = x^2 + a'y + b'x + c'$, $G'=y^2 + d'y + \cdots$,
and so forth.  The matrix $M$ defined above is obtained by reducing
the columns of the matrix for $W^{10}_{D'}$.  Thus the columns of $M$ 
give the reductions of $F',G',H',xF',xG'$ modulo the columns of the
matrix for $W^{10}_D$ in~\eqref{equation3.2}.  This yields
\begin{equation}
\label{equation3.3}
M = 
\begin{pmatrix}
c'-c & f'-f & * & * & *\\
b'-b & e'-e & * & * & *\\
a'-a & d'-d & * & * & *
\end{pmatrix}.
\end{equation}
The first two columns describe the reductions $F'-F, G'-G \in W^5$ 
of $F'$ and $G'$ modulo $W^{10}_D$.  (These were called $B_{F'},
B_{G'}$ in Section~4 of~\cite{FASKKM}.)  One obtains the third column, for
instance, by reducing the column representing $H'$ to the column
corresponding to $H'-H-(p'-p)G-(q'-q)F\in W^5$, which is a reduction with
respect to the first three columns of the matrix
in~\eqref{equation3.2}.  The last two columns are similar.

Now the kernel of $M$ corresponds 
to linear combinations of $F',G',H',xF',xG'$
that belong to $W^{10}_D \intersect W^{10}_{D'}$.
As seen in Section~6 of~\cite{FASKKM}, the algorithms there
find $\ker M$ by a Gaussian elimination that assumes that the leftmost
$3\times3$ submatrix of $M$ is invertible.  This amounts to
invertibility of the leftmost $3\times 3$ minor, called $U$ in
equation~(15) of~\cite{FASKKM}\footnote{Actually, the algorithm there also
assumes that the top left $1\times1$ and $2\times 2$ minors, $A_1$ and
$D = \Delta_{12}$, are also invertible, and replaces inverting all
three quantities $A_1, D, U$ by one field inversion combined with several
multiplications.  To genuinely only compute $\ker M$ while checking
that $U \neq 0$, one can exchange rows of $M$ as needed,
which does not change the kernel or the fact that $U\neq 0$.  Thus,
possibly after a first row exchange, one can first ensure that
$A_1 \neq 0$, then one computes 
$\Delta_{12}$ and $\Delta_{13}$, which are both needed anyhow for the
computation.  One then exchanges rows 2 and~3 if 
needed to ensure that $\Delta_{12} \neq 0$.}.
The key point to observe is that the above $3\times3$
leftmost submatrix of $M$, with determinant~$U$, represents a matrix
whose kernel computes $W^8_D \intersect W^8_{D'}$.  Thus invertibility
of this submatrix means 
that $W^8_D \intersect W^8_{D'} = 0$, and that we satisfy the
condition of Proposition~\ref{proposition3.1}.  Hence the computation,
if successful,  returns the correct result for $W^{10}_{D+D'}$ in
terms of $\ker M$; the way in which this kernel is computed, which
essentially expresses the fourth and fifth columns of $M$ as linear
combinations of the first three columns, simultaneously ends up computing
``monic'' elements $s \in W^9_{D+D'}, t\in W^{10}_{D+D'}$.
\end{example}

We have just shown the following result.

\begin{proposition}
\label{proposition3.3}
In the $C_{3,4}$ case, let $D$ and $D'$ be typical divisors of
degree~$3$.  Compute $s \in W^9, t \in W^{10}$ as in Sections 3, 4, 6,
and~7 of~\cite{FASKKM}.  If the inversion in Proposition~6.1
of~\cite{FASKKM} can be carried out\footnote{Possibly allowing as
before for row operations, so the only condition that really gets
checked is $U \neq 0$.},
then the result correctly
produces $s,t \in I_{D+D'}$, and one deduces that $D,D'$ were disjoint
to begin with and that $D+D'$ is semi-typical.  The converse also holds.
\end{proposition}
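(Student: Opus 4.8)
The plan is to reduce Proposition~\ref{proposition3.3} to the machinery already built up in Section~\ref{section2}, in particular to Proposition~\ref{proposition3.1} and the converse direction of Proposition~\ref{proposition2.7}, by carefully tracking what the algorithm of~\cite{FASKKM} actually computes in the language of the spaces $W^N_D$. First I would recall the setup from Example~\ref{example3.2}: the algorithm takes typical $D,D'$ of degree~$3$, forms the $3\times 5$ matrix $M$ of~\eqref{equation3.3} whose columns are the reductions modulo $W^{10}_D$ of a basis $\{F',G',H',xF',xG'\}$ of $W^{10}_{D'}$, and computes $\ker M$ by a Gaussian elimination that inverts the leftmost $3\times 3$ submatrix $U$ (this is the inversion of Proposition~6.1 of~\cite{FASKKM}). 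The columns of $M$ are, by construction, the images under the reduction map $r\colon W^{10}\to W^5$ of a basis of $W^{10}_{D'}$, so $\ker M$ is canonically identified with $W^{10}_D\intersect W^{10}_{D'}$.

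The crucial observation, already flagged at the end of Example~\ref{example3.2}, is that the leftmost $3\times 3$ submatrix of $M$ is exactly the matrix of the composite $W^8_{D'}\hookrightarrow W^8\xrightarrow{r} W^5$, whose kernel is $W^8_D\intersect W^8_{D'}$; here $8 = d+d'+g-1$ with $d=d'=g=3$. Hence invertibility of $U$ is equivalent to $W^8_D\intersect W^8_{D'}=0$, which is precisely the hypothesis $\hat W = 0$ of Proposition~\ref{proposition3.1} (with $N=10$, and noting that computing $\ker M$ on all of $W^{10}$ simultaneously exhibits the $N=8$ intersection as the leftmost block). So the first conclusion — that $D$ and $D'$ are disjoint, that $W^{10}_D\intersect W^{10}_{D'} = W^{10}_{D+D'}$, and that $D+D'$ is semi-typical — follows immediately from Proposition~\ref{proposition3.1} once the inversion succeeds.

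It then remains to see that the specific pair $s\in W^9,\,t\in W^{10}$ produced by the Gaussian elimination lies in $I_{D+D'}$ and in fact witnesses typicality; here $d+d' = 6 = \deg(D+D')$, so $W^{(d+d')+g}_{D+D'} = W^9_{D+D'}$ and $W^{(d+d')+g+1}_{D+D'}=W^{10}_{D+D'}$, matching the degrees $9,10$ of $s,t$. Since $\ker M = W^{10}_{D+D'}$ and the elimination expresses the fourth and fifth columns of $M$ in terms of the first three, the resulting $s,t$ are the monic elements of $W^9_{D+D'}$ and $W^{10}_{D+D'}$ of exact degrees $9$ and $10$; by part~(1) of Proposition~\ref{proposition2.6} (applicable since $D+D'$ is now known to be semi-typical) these spaces have dimensions $2$ and $3$, so such monic elements of the claimed degrees exist and are the natural output. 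Finally, to upgrade from ``$s,t\in I_{D+D'}$ of the right degrees'' to ``$D+D'$ is typical,'' I would invoke the converse part of Proposition~\ref{proposition2.7}: it suffices to check that $s W^{2g} + t W^{2g-1+i} + W^{d+g-1}$ (with $d$ here the degree $6$ of $D+D'$, so $W^{d+g-1}=W^8$, and $2g=6$) fills up $W^{(d+d')+3g+i}$ for some single $i\geq 0$; for $C_{3,4}$ curves with a degree-$6$ divisor this is a bounded linear-algebra verification analogous to the triangularity computation in Example~\ref{example2.3} and Proposition~\ref{proposition2.9}. The main obstacle, and the place requiring genuine care rather than routine bookkeeping, is this last step: one must confirm that the elimination procedure of~\cite{FASKKM} really outputs the monic $s,t$ of exact degrees $9,10$ (and not, say, a pair failing the degree condition), and that these satisfy the spanning condition of Proposition~\ref{proposition2.7} — i.e. one must trace the precise normalization used in Sections~6 and~7 of~\cite{FASKKM} and match it against the hypotheses of the converse statements proved above.
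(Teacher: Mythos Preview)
Your first two paragraphs are correct and match the paper's approach exactly: the paper's ``proof'' is in fact just the discussion of Example~\ref{example3.2}, which identifies the leftmost $3\times 3$ block of $M$ with the map whose kernel is $W^8_D \intersect W^8_{D'}$, and then invokes Proposition~\ref{proposition3.1}. That already yields everything claimed in Proposition~\ref{proposition3.3}: disjointness of $D,D'$, the identification $\ker M = W^{10}_{D+D'}$, semi-typicality of $D+D'$, and hence $s,t \in I_{D+D'}$.

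Your third paragraph, however, overreaches. Proposition~\ref{proposition3.3} asserts only that $D+D'$ is \emph{semi-typical}, not typical; you are trying to prove more than the statement requires. In the paper, typicality of $D+D'$ is established only later, in Proposition~\ref{proposition3.7}, as a byproduct of successfully carrying out the subsequent flip (via Proposition~\ref{proposition3.5}). The ``main obstacle'' you identify --- verifying the spanning condition of Proposition~\ref{proposition2.7} for the pair $(s,t)$ --- is therefore not an obstacle for this proposition at all, and the paper deliberately does not address it here. Incidentally, there is also a small arithmetic slip: for the semi-typical divisor $D+D'$ of degree~$6$ with $g=3$, Proposition~\ref{proposition2.6} gives $\dim W^9_{D+D'} = 1$ and $\dim W^{10}_{D+D'} = 2$, not $2$ and $3$.
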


The same argument as in Proposition~\ref{proposition3.1} and
Example~\ref{example3.2} generalizes to show:

\begin{proposition}
\label{proposition3.4}
Let $C$ be arbitrary, and consider typical divisors $D,D'$ of degrees
$d,d'$ (both $\geq g$, as usual), described by elements
$F \in W^{d+g}_D, G \in W^{d+g+1}_D$ and 
$F' \in W^{d'+g}_{D'}, G' \in W^{d'+g+1}_{D'}$.
For $N \geq d+d'+g-1$, the following algorithm will either fail or
succeed, and, if it succeeds, will correctly compute $W^N_{D+D'}$.
The algorithm succeeds if and only if (i) $D,D'$ were disjoint to
begin with, and (ii) $D+D'$ is semi-typical.

\smallskip 
\noindent  
\emph{Algorithm:} 
\begin{enumerate}
\item 
Compute column-echelon matrices whose columns represent bases for
$W^N_D$ and $W^N_{D'}$, respectively.  For example, if $N = d+3g+i$
with $i \geq 0$, one can start with a basis for $W^N_D$ obtained from
$FW^{2g+i} \directsum GW^{2g-1}$ as in~\eqref{equation2.9}, write the
basis as columns, and then ``column reduce'' the resulting matrix into
echelon form; if $N < d+3g$, one can compute the column-reduced matrix
for the larger space $W^{d+3g}_D$, and select the first $N-d-g+1$
columns.
(Note:  If $D,D'$ are merely semi-typical, but we have access
        nonetheless to column-echelon bases for the spaces $W^N_D$
        and $W^N_{D'}$, then we can still use these spaces and proceed
        to the next step.)
\item
Using the matrix for $W^N_D$, reduce the columns coming from the
matrix for $W^N_{D'}$ to representatives in $W^{d+g-1} \isomorphic
W^N/W^N_D$.  This yields a matrix $M$ of size $d \times (N-d'-g+1)$,
whose columns represent the reduction modulo $I_D$ of the basis of
$W^N_{D'}$, ordered by increasing degree (of the original basis
element, not of the reduction).
\item
If the leftmost $d\times d$ submatrix of $M$ is not invertible (easily
seen during the linear algebra, e.g., by carrying out Gaussian
elimination), then return ``fail''.  This is because the leftmost $d$
columns of $M$ represent the map from $W^{d+d'+g-1}_{D'}$ to
$\Raff/I_D \isomorphic W^{d+d'+g-1}/W^{d+d'+g-1}_D \isomorphic W^{d+g-1}$.
\item
Otherwise, compute an echelon basis for the kernel of $M$.  This will
consist of column vectors of the form
$(*,\dots, 1, 0, \dots)^{\mathbf{T}}$  
with at least $d$ initial entries before the final nonzero entry $1$.
By taking the corresponding linear combinations of the previous
ordered basis for $W^N_{D'}$, convert the basis for $\ker M$
into a basis for $W^N_{D+D'}$, ordered by increasing degree.
Return ``succeed'', along with the basis for $W^N_{D+D'}$. 
\end{enumerate}
\end{proposition}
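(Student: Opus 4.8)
The plan is to identify the matrix $M$ produced by the algorithm as the matrix of the reduction map $r$ of~\eqref{equation3.1}, to recognize its leftmost $d\times d$ block as the restriction of $r$ to a pair of $d$-dimensional spaces, and then to feed the result into Proposition~\ref{proposition3.1}.

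First I would check that Step~1 is legitimate. Since $D$ and $D'$ are typical, hence semi-typical by Proposition~\ref{proposition2.5}, the given generators satisfy $\deg F=d+g$, $\deg G=d+g+1$ and form a basis of $W^{d+g+1}_D$ (Proposition~\ref{proposition2.5}(3)); Proposition~\ref{proposition2.7} then applies with $(F,G)$ in place of $(s,t)$, so \eqref{equation2.9} supplies the spanning set $FW^{2g+i}\directsum GW^{2g-1}$ for $W^{d+3g+i}_D$ (and, by selecting columns, bases for the smaller $W^N_D$, exactly as the algorithm prescribes); column-reducing yields echelon bases whose columns have degrees $d+g,d+g+1,\dots,N$, and likewise for $D'$. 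By Proposition~\ref{proposition2.6}(1) applied to $D'$ we have $\dim W^{d'+g+j}_{D'}=j+1$ for all $j\ge-1$, so $\dim W^N_{D'}=N-d'-g+1$; since $N\ge d+d'+g-1$ this is $\ge d$, and the \emph{first} $d$ columns of the echelon basis of $W^N_{D'}$ are precisely those of degrees $d'+g,\dots,d'+g+(d-1)=d+d'+g-1$, which span $W^{d+d'+g-1}_{D'}$. Similarly, semi-typicality of $D$ gives $W^m=W^m_D\directsum W^{d+g-1}$ for every $m\ge d+g-1$ (Proposition~\ref{proposition2.6}(2)), so $r\colon W^N\to W^{d+g-1}$ is the well-defined projection along $W^N_D$, and its target $W^{d+g-1}$ has dimension exactly $d$ (as $d+g-1\ge2g-1$).

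Next I would analyze $M$. By construction the $d\times(N-d'-g+1)$ matrix $M$ represents the composite $W^N_{D'}\hookrightarrow W^N\xrightarrow{r}W^{d+g-1}$ in the chosen bases, so $\ker M=W^N_{D'}\intersect\ker r=W^N_{D'}\intersect W^N_D$. Because the basis of $W^N_{D'}$ is ordered by degree, the leftmost $d\times d$ block of $M$ is exactly the matrix of the restriction $r\colon W^{d+d'+g-1}_{D'}\to W^{d+g-1}$ between two $d$-dimensional spaces, whose kernel is $W^{d+d'+g-1}_{D'}\intersect W^{d+d'+g-1}_D=\hat{W}$, the space of Proposition~\ref{proposition3.1}. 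Hence this block is invertible if and only if $\hat{W}=0$, which is the test in Step~3. If it is singular the algorithm returns ``fail'' and there is nothing to prove. If it is invertible, then $\hat{W}=0$, and the elimination of Step~4 against the invertible leading block computes $\ker M=W^N_D\intersect W^N_{D'}$ while simultaneously certifying $\hat W=0$; the hypotheses of Proposition~\ref{proposition3.1} are therefore met for this $N$, and that proposition yields conclusions~(i) and~(ii) together with $W^N_D\intersect W^N_{D'}=W^N_{D+D'}$. Re-expanding the echelon basis of $\ker M$ in the ordered basis of $W^N_{D'}$, as in Step~4, then produces a basis of $W^N_{D+D'}$ ordered by pole order at $\Pinf$; the shape $(*,\dots,*,1,0,\dots,0)^{\mathbf{T}}$ with at least $d$ leading entries is forced because invertibility of the leading block lets each remaining column of $M$ be written uniquely in terms of the first $d$.

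I expect the only delicate point to be the bookkeeping in the middle step: verifying that the first $d$ columns of the echelon basis of $W^N_{D'}$ genuinely span $W^{d+d'+g-1}_{D'}$ and that the target of $r$ has dimension exactly $d$, so that the leftmost block of $M$ is truly square, and then confirming that this elimination order legitimately discharges the (mildly informal) hypothesis of Proposition~\ref{proposition3.1} that one ``simultaneously'' recovers $\hat W$. Everything else is the dimension-counting used repeatedly in Section~\ref{section2}, together with routine linear algebra.
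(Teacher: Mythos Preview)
Your proposal is correct and follows essentially the same approach as the paper: the paper offers no separate proof for Proposition~\ref{proposition3.4}, instead declaring that ``the same argument as in Example~\ref{example3.2} generalizes,'' and your write-up is precisely that generalization, identifying $\ker M = W^N_D \cap W^N_{D'}$, recognizing the leftmost $d\times d$ block as the map whose kernel is $\hat W$, and then invoking Proposition~\ref{proposition3.1}. Your bookkeeping on the dimensions and on why the first $d$ columns span $W^{d+d'+g-1}_{D'}$ is more careful than anything the paper spells out, but the underlying logic is identical.
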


\subsection{Flipping a divisor}
\label{subsection3.2}
We now turn to the question of finding a complementary divisor for a
given semi-typical divisor $D$.  Consider a nonzero element 
$s\in W^{d+g}_D$, which is unique up to a multiplicative constant.  Hence
the divisor of $s$ is uniquely determined, and we have (ignoring as
usual poles at $\Pinf$) that $\Divisor s = D+A$, with $\deg A = g$.
The divisor $A$ is complementary to $D$, and our goal is to compute
the space $W^N_A$ for  suitable $N$; we shall refer to this operation
as ``flipping'' the divisor $D$.  On the level of ideals of $\Raff$,
the effect of flipping is to compute the colon ideal
$I_A = (s\Raff : I_D)$, which satisfies $I_D \cdot I_A = s\Raff$.
In the Jacobian, this corresponds to replacing the class
$[D - d\Pinf] \in  \Pic^0(C)$ by its negative class $[A - g\Pinf]$,
since the ``full'' principal divisor of~$s$ is 
$\Divisor s = D+A - (d+g)\Pinf$.

The following proposition shows that the result $A$ of flipping is
a reduced divisor; thus flipping combines inverting the
class of $D$ (or, more precisely, of $D-d\Pinf$) in the Jacobian, and
reducing the result.

\begin{proposition}
\label{proposition3.4.5}
If $D$ is semi-typical, then its flip $A$ is reduced along $\Pinf$,
meaning that $A$ is not linearly equivalent to any divisor of the form
$A' + \Pinf$, with $A'$ effective.
\end{proposition}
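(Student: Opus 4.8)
The plan is to show that if $A$ were linearly equivalent to some $A' + \Pinf$ with $A'$ effective, then we could manufacture an element of $W^{d+g-1}_D$, contradicting semi-typicality. Recall that $\Divisor s = D + A$ (ignoring the pole at $\Pinf$), and that the full principal divisor of $s$ is $D + A - (d+g)\Pinf$. Suppose $A \sim A' + \Pinf$ as divisors on $C$; then there is a nonzero $h \in \fieldK(C)$ with full divisor $\Divisor h = A' + \Pinf - A$, i.e. $h$ has a pole of order exactly one at $\Pinf$ (so $\deg h = 1$ in the sense of Definition~\ref{definition2.1}, but in fact $h$ need not lie in $\Raff$ since it may have a pole along $A$; rather $h$ has its affine poles precisely along $A$, with the affine zero divisor $A'$).

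Next I would form the product $f = sh$. Its full divisor is $(D + A - (d+g)\Pinf) + (A' + \Pinf - A) = D + A' - (d+g-1)\Pinf$. Since $D$ and $A'$ are effective divisors disjoint from $\Pinf$, this shows $f \in \Raff$, that $\Divisor f \geq D$ (so $f \in I_D$), and that $\deg f = d + g - 1$. Therefore $f \in W^{d+g-1}_D$. Moreover $f \neq 0$ because $s \neq 0$ and $h \neq 0$. This contradicts the hypothesis that $D$ is semi-typical, i.e. that $W^{d+g-1}_D = 0$. Hence no such $A'$ exists, and $A$ is reduced along $\Pinf$.

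The one point requiring a little care — and the step I expect to be the main obstacle in writing this cleanly — is bookkeeping the divisors at $\Pinf$ correctly, since Definition~\ref{definition2.1} uses $\Divisor$ to mean the \emph{affine} part. One must check that the affine divisor of $f = sh$ really is $D + A'$ (not something with a spurious $\Pinf$ contribution or a cancellation), and that $f$ genuinely lies in $\Raff$, i.e. has no affine poles: the affine poles of $s$ are none (as $s \in \Raff$), and the affine poles of $h$ are exactly $A$, which are cancelled by the zeros of $s$ along $A$ coming from $\Divisor s = D + A$. So the cancellation $A - A = 0$ in the affine part is exactly what makes $f$ regular on $C - \{\Pinf\}$, and the residual pole order at $\Pinf$ is $(d+g) - 1 = d+g-1$, giving $\deg f = d+g-1$ as claimed. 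Once this accounting is done, the contradiction with $W^{d+g-1}_D = 0$ is immediate.
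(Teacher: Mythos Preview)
Your proof is correct and follows the same approach as the paper's: both argue that $A \sim A' + \Pinf$ would yield a nonzero element of $W^{d+g-1}_D$, contradicting semi-typicality (the paper does this in one line by observing that $D + A' - (d+g-1)\Pinf$ is then principal, while you explicitly construct the witness as $f = sh$). One small slip to fix: since the full divisor of $h$ is $A' + \Pinf - A$, the coefficient $+1$ at $\Pinf$ means $h$ has a \emph{zero} there, not a pole---but your subsequent divisor computation for $f = sh$ is nonetheless correct.
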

\begin{proof}
If $A$ were equivalent to $A' + \Pinf$, then $D+A' - (d+g-1)\Pinf$
would be principal, so there would exist a nonzero element $s' \in
W^{d+g-1}_D$, contradicting the semi-typicality of $D$.
\end{proof}

We can compute $W^N_A$ similarly to Subsection~2.2 of~\cite{FASKKM}.
Take a nonzero $t \in W^{d+g+1}_D$, with $\deg t = d+g+1$.  Write
$\Divisor t = D+B$, with $\deg B = g+1$.  In the typical case, we know
that $s\Raff + t\Raff = I_D$, since $\{s,t\}$ form an IGS for $D$;
equivalently, $A$ and $B$ are disjoint.  In that case, we can compute
\begin{equation}
\label{equation3.4}
\begin{split}
W^N_A &= \{ \ell \in W^N \mid \ell I_D \subset s\Raff\}
       = \{ \ell \in W^N \mid \ell s, \ell t \in s\Raff \} \\
      &= \{ \ell \in W^N \mid \ell t \in s W^{N+1} \},
\qquad \text{ assuming } \{s,t\} \text{ an IGS for } D.
\end{split}
\end{equation}
As in the discussion preceding Lemma~2.4 of~\cite{FASKKM}, setting up
a system of linear equations to solve~\eqref{equation3.4} is wasteful.
Indeed, such a system essentially computes $tW^N \intersect sW^{N+1}$
inside the overly large space $W^{N+d+g+1}$, even though both subspaces lie
inside the smaller space $W^{N+d+g+1}_D$, which is usually of codimension
$d$ inside $W^{N+d+g+1}$.  To remove the excess dimensions from
consideration, we proposed in~\cite{FASKKM} to carry out a
``truncated'' intersection after projecting to the quotient
$W^{N+d+g+1}/W^{d+g-1}$.  This truncation amounts computationally to
ignoring the top $d$ rows of the matrix whose kernel describes the
intersection in~\eqref{equation3.4}, as in Section~8 of~\cite{FASKKM}.
We can also describe this truncated intersection conceptually as
\begin{equation}
\label{equation3.5}
W' = \{\ell \in W^N \mid \ell t \in s W^{N+1} + W^{d+g-1}\}.
\end{equation}
In practice, we will have $N = 2g-1+i$ with $i \geq 0$, so the above
computation measures the extent to which the sum
$sW^{2g+i} + tW^{2g-1+i} + W^{d+g-1}$ is not direct.  The reader
should compare this with equations \eqref{equation2.7}
and~\eqref{equation2.8}: there, the ``excess degree'' $i$
appeared in only one of the first two summands, and the sum was direct.

Analogously to Proposition~\ref{proposition3.1}, we begin our
discussion with a criterion to guarantee that the space $W'$ is
really equal to $W^N_A$.  This is the result that originally led us
to define typical divisors and to investigate their properties.

\begin{proposition}
\label{proposition3.5}
Let $D$ be given with $s \in W^{d+g}_D$ and $t \in W^{d+g+1}_D$.
Suppose that $\deg s = d+g$, $\deg t = d+g+1$, and $N \geq 2g-1$.
Assume further that while computing the space $W'$ of~\eqref{equation3.5},
we also compute $\hat{W} = W' \intersect W^{2g-1}$, and
determine that $\hat{W} = 0$.  Then $D$
was typical to begin with, and $W'$ is indeed equal to $W^N_A$, where
$A$ is the flip of $D$, and $\Divisor s = D+A$ as in our discussion.
Moreover, $A$ is semi-typical.

Conversely, if $D$ is typical, then $\hat{W}$ will equal zero,
and the above computation of $W^N_A$ is correct.  Thus the flip of
a typical divisor is always semi-typical.
\end{proposition}
\begin{proof}
Note first that $sW^{N+1} \intersect W^{d+g-1} = 0$, since a nonzero
multiple of $s$ must have degree at least $d+g$.  (Since
$\deg s = d+g$, we also see that the divisor $A$ has degree~$g$.)
Hence $\dim (sW^{N+1} + W^{d+g-1}) = (N+2-g) + d$.
If $\hat{W} = 0$, then
$tW^{2g-1} \intersect (sW^{N+1} + W^{d+g-1}) = 0$.  Thus 
$\dim (tW^{2g-1} + sW^{N+1} + W^{d+g-1}) = N+d+2 = \dim W^{N+d+g+1}$.
Since $N \geq 2g-1$, it follows that
$tW^{2g-1} + sW^{N+1} + W^{d+g-1} \subset W^{N+d+g+1}$, so we obtain
equality.  By the converse condition to~\eqref{equation2.8} in
Proposition~\ref{proposition2.7}, we obtain that $D$ is typical, as desired.
Hence $\{s,t\}$ is an IGS for $D$, 
and~\eqref{equation3.4} holds, so 
$W^N_A \subset W'$; in particular,
$W^{2g-1}_A \subset \hat{W} = 0$, so we deduce that $A$
is semi-typical.  It remains to show 
that $W' \subset W^N_A$.  Suppose that $\ell \in W'$ satisfies
$t \ell = s \ell' + \ell''$, with $\ell' \in W^{N+1}$ and
$\ell'' \in W^{d+g-1}$.  Then 
$\ell'' \in t W^N + s W^{N+1} \subset I_D$, so we conclude that
$\ell'' \in I_D \intersect W^{d+g-1} = W^{d+g-1}_D = 0$ by
semi-typicality.  Thus $t \ell = s\ell'$, so $\ell \in W^N_A$
by~\eqref{equation3.4}.  This proves the results in the first
paragraph.

The converse holds because, when $D$ is typical, 
if any nonzero element $\ell \in W^{2g-1}$
satisfying~\eqref{equation3.5} existed, it would give rise to a
nontrivial linear dependence 
between the subspaces $tW^{2g-1}$, $sW^{N+1}$, and $W^{d+g-1}$,
contradicting the direct sum decomposition in~\eqref{equation2.8}.
\end{proof}

\begin{corollary}
\label{corollary3.5.5}
If $D$ is a typical divisor of degree $d=g$, then $D$ is reduced in
the sense of Proposition~\ref{proposition3.4.5}.
\end{corollary}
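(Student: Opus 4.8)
The plan is to realize $D$ itself as the flip of a semi-typical divisor, and then invoke Proposition~\ref{proposition3.4.5}. Since $D$ is typical of degree $d = g$, choose the nonzero element $s \in W^{2g}_D$ (unique up to scalar, with $\deg s = d + g = 2g$ by Proposition~\ref{proposition2.5}), and let $A$ be its flip, so that $\Divisor s = D + A$ with $A$ a good divisor of degree $2g - g = g$. The partial converse in Proposition~\ref{proposition3.5} guarantees that $A$ is semi-typical.

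Next I would check that $D$ is precisely the flip of $A$. Because $\Divisor s = D + A \geq A$, the function $s$ lies in $I_A$, and since $\deg s = 2g = g + g$ it lies in $W^{g+g}_A$. As $A$ is semi-typical of degree $g$, Proposition~\ref{proposition2.6}(1) (with $j = 0$) gives $\dim W^{2g}_A = 1$, so the nonzero element $s$ spans $W^{2g}_A$. Hence any nonzero element used to flip $A$ is a scalar multiple of $s$ and has the same affine divisor $D + A$; by the definition of flipping in Subsection~\ref{subsection3.2}, the flip of $A$ is exactly $D$.

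Finally, applying Proposition~\ref{proposition3.4.5} to the semi-typical divisor $A$ shows that its flip is reduced along $\Pinf$; that is, $D$ is not linearly equivalent to any effective divisor of the form $A' + \Pinf$, which is the assertion of the corollary.

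I do not anticipate a real obstacle: the only points needing care are the degree bookkeeping (that $\deg A = g$, so that $s$ sits in the one-dimensional space $W^{2g}_A$) and the verification that $A$ is a good, semi-typical divisor so that Propositions~\ref{proposition3.5} and~\ref{proposition3.4.5} apply; both follow directly from results already established above.
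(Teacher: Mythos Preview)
Your proof is correct and follows essentially the same route as the paper's: show that the flip $A$ of $D$ is semi-typical via the partial converse in Proposition~\ref{proposition3.5}, observe that $D$ is in turn the flip of $A$, and then apply Proposition~\ref{proposition3.4.5} to $A$. You spell out more carefully why $s$ spans $W^{2g}_A$ (using the dimension count from Proposition~\ref{proposition2.6}(1)), whereas the paper compresses this into the single remark that $s \in W^{2g}_{D+A}$, but the argument is the same.
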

\begin{proof}
The converse in Proposition~\ref{proposition3.5} tells us that
the flip $A$ of $D$ is semi-typical.  Since $s \in W^{2g}_{D+A}$, we
conclude that $D$ is also the flip of $A$, so by applying
Proposition~\ref{proposition3.4.5} to~$A$, we deduce that $D$ is
reduced.
\end{proof}

The following is the algorithm that corresponds to
Proposition~\ref{proposition3.5}.  We state it for general
$d$, but in fact will apply it mainly when $d=g$ or $d=2g$.

\begin{proposition}
\label{proposition3.6}
Make the same assumptions on $D$, $s$, $t$, and $N$ as in
Proposition~\ref{proposition3.5}.
The following algorithm succeeds if and only if $D$ is typical, and,
upon success, correctly computes $W^N_A$, and certifies that the input
$D$ was typical and that the output $A$ is semi-typical.

\smallskip 
\noindent  
\emph{Algorithm:} 
\begin{enumerate}
\item 
Compute an $(N+2)\times (N-g+2)$ matrix $M'$ (analogous to the last six
columns of the matrix $N'$ in Section~9 of~\cite{FASKKM}), whose columns
describe an echelon basis for the image of 
$sW^{N+1}$ in $W^{N+d+g+1}/W^{d+g-1}$.  (This amounts to multiplying $s$ by
each ``monomial'' in $W^{N+1}$ in order of increasing degree, and
ignoring the $d$ terms of ``lowest degree'' in each result.)  It
follows that reducing modulo the columns of $M'$ implements the
reduction map from the $(N+2)$-dimensional space
$W^{N+d+g+1}/W^{d+g-1}$ to the $g$-dimensional space
$V = W^{N+d+g+1}/(W^{d+g-1}+sW^{N+1})$.
\item
Take a similar echelon basis for the image of $tW^N$ in
$W^{N+d+g+1}/W^{d+g-1}$, and use the matrix $M'$ to reduce each element of
this basis into $V$.  Make a new $(N-g+1)\times g$ matrix $M''$ whose
columns are the reductions of these basis elements.  Thus the leftmost $g$
columns of $M''$ represent the reductions of $tW^{2g-1}$ to the space $V$.
\item
Perform Gaussian elimination on $M''$ to find its kernel, which
corresponds to the space $W'$ of~\eqref{equation3.5}.  Along the way,
compute $\hat{W}$ as the kernel of the leftmost $g\times g$ submatrix
of $M''$.  If $\hat{W} \neq 0$, then return ``fail''.
\item
Otherwise, compute an echelon basis for the kernel of $M''$; analogously to
Proposition~\ref{proposition3.4}, this produces $N-2g+1$ elements
$\ell \in W^N$, ordered by degree,
that satisfy
equation~\eqref{equation3.4}.  Return ``succeed'', along with these
elements as a basis for $W^N_A$.
\end{enumerate}
\end{proposition}

\begin{example}
\label{example3.6.5}
We illustrate the above algorithm on the results of Section~9
of~\cite{FASKKM}.  In that context, the divisor that we wish to flip
is written $D+D'$, of degree $d=2g=6$, and we know that $D+D'$ is
semi-typical (this follows from Proposition~\ref{proposition3.3}
above, when $D \neq D'$, and from Proposition~\ref{proposition3.11} below,
when $D=D'$).  In particular, we know
elements $s \in W^9_{D+D'}$ and $t \in W^{10}_{D+D'}$.  In flipping
this divisor, we have $\Divisor s = D + D' + D''$ with $\deg D'' = g = 3$,
and we wish to compute $W^N_{D''}$ for $N = 7$.  We calculate $W'$ as
in~\eqref{equation3.5} by working in the quotient space
$V = W^{N+d+g+1}/(sW^{N+1} \directsum W^{d+g-1})
   = W^{17}/(sW^8 \directsum  W^8)
   = W^{17}/(sW^8 + W^9)$,
where the last equality follows from $\fieldK s + W^8 = W^9$.  We have
$\dim V = 3$, and we construct in that article a matrix $M''$ whose
columns represent the images in $V$ of the basis
$\{t,xt,yt,x^2 t, xyt\}$ for $tW^7$.  Thus an element of $\ker M''$
describes a linear combination of $c_0 t + c_1 xt + \dots,$ that lies in
$tW^7 \intersect (sW^8 \directsum W^8)$, hence simultaneously describes a
combination $\ell = c_0 + c_1 x + \dots \in W'$.  Now in Proposition~9.3
of~\cite{FASKKM}, we simultaneously invert two field elements
$\beta_2$ and $\gamma_4$; in that context, invertibility of $\beta_2$
means that the first three columns of $M''$ are linearly independent.
These columns represent the images of $t, tx, ty$ in $V$, where we
recall that $\linalgspan\{1,x,y\} = W^4 = W^5$, so independence of
these three columns means that $W' \intersect W^5 = 0$.  Hence by
Proposition~\ref{proposition3.5}, we deduce (provided we are able to
invert $\beta_2$) that $D+D'$ is indeed typical, and that
$W' = W^7_{D''}$, as desired.  Our results show that $D''$ is
semi-typical, but in this case we can prove the stronger result that
$D''$ is typical, provided $\gamma_4$ is also invertible.  To see
this, refer to the last paragraph of Section~9 of~\cite{FASKKM}, where
one sees that calculating $\ker M''$ produces elements of the form
$F'' = x^2 + a''y + b''x + c'', G'' = xy + d''y + e''x + f''
\in W^7_{D''}$.  Here, somewhat miraculously, $a'' = -\gamma_4$, so it
is invertible, and hence Proposition~\ref{proposition2.9} tells us
that $D''$ is typical, which goes beyond our result for arbitrary $C$.
\end{example}

We have thus shown:

\begin{proposition}
\label{proposition3.7}
In the $C_{3,4}$ case, suppose that $s \in W^9_{D+D'}$ and
$t\in W^{10}_{D+D'}$ are as given in the input of Sections 8 and~9
of~\cite{FASKKM}.  If calculations of those sections can be
carried out, including the inversion of the product
$\beta_2 \gamma_4$, then the final result of that calculation
correctly computes the ``flip'' $D''$ of $D+D'$, and it also certifies
that $D''$ is typical.  Conversely, if $D+D'$ and $D''$ are
both\footnote{In fact, $D+D'$ is typical if and only if $D''$ is, due
to Corollary~\ref{corollary3.8.5}.
Indeed, $D+D'$ and $D''$ are flips of each other, up to the
equivalence of Corollary~\ref{corollary2.8}.}
typical, then the product $\beta_2 \gamma_4$ can be successfully
inverted, and the calculation succeeds.
\end{proposition}

\begin{example}
\label{example3.7.5}
We now apply Proposition~\ref{proposition3.5} to the case of
flipping a divisor of degree~$d=3$ on a $C_{3,4}$ curve.  This is
needed in Subsection~2.3 and Section~10 of~\cite{FASKKM}.  Consider a
typical divisor $D$, described as usual in terms of $\{F,G\}$ instead
of $\{s,t\}$, where $F = x^2+ay+bx+c \in W^6_D$ (with $a\neq 0$) and 
$G = xy+dy+ex+f \in W^7_D$.  We can write
$\Divisor F = D + A, \Divisor G = D + B$
where this time we know that $A$ and $B$ are disjoint,
and~\eqref{equation3.4} holds.  In our computation of the 
``flip'' of $D$, we obviously know that $F \in W^6_A$, so our goal
is to find an element $G_1 \in W^7_A$, where $A$ is described
by $\{F, G_1\}$; hence we wish to apply
Proposition~\ref{proposition3.5} with $N=7$.  The desired element
$G_1$ must satisfy $G_1G \in FW^8$, and as in the proof of our
proposition, it is enough to know that $G_1G \in FW^8 + W^5$, since
any possible difference between $G_1 G$ and an element of $FW^8$ must
belong to $(F\Raff + G\Raff) \intersect W^5 = W^5_D = 0$.
Now in Equation~(19) of~\cite{FASKKM}, we exhibit specific elements
$G_1 \in W^7, H_1 \in W^8$ that satisfy $GG_1 + FH_1 \in W^4 = W^5$,
so this $G_1$ is our desired element.  (In that equation, we wrote
$G''', H$ instead of $G_1,H_1$, but we do not want to cause confusion
with our notation $H$ from this article; besides, the
notation $G_1, H_1$ appears in Section~5 of~\cite{FASKKM} with the same
meaning that we wish to use now.)
\end{example}

We deduce the following result.


\begin{proposition}
\label{proposition3.8}
In the $C_{3,4}$ case, let $D$ be a typical divisor of degree~$3$,
described by $F,G$ as above.  Compute $G_1 \in W^7, H_1 \in W^8$ such
that $GG_1 + FH_1 \in W^5$, as described in the above paragraph.  Then
(i) $GG_1 + FH_1 = 0$; (ii) $\{F,G_1\}$ are an IGS for the complementary
divisor $A$ of $D$; (iii) $A$ is typical; (iv) the divisors of $G_1$
and $H_1$ have the form $\Divisor G_1 = A + E$ and $\Divisor H_1 = B + E$;
and (v) the divisors $A$ and $B$ are disjoint, the divisors $D$ and
$E$ are disjoint, $\deg A = \deg D = 3$, and $\deg B = \deg E = 4$.
\end{proposition}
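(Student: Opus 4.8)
The plan is to package the computation in the paragraph above as an application of Proposition~\ref{proposition3.5} with $D$ typical of degree $d=g=3$, $s=F$, $t=G$, and $N=7$, and then extract the five assertions one by one. First I would observe that the element $G_1 \in W^7$ produced in Equation~(19) of \cite{FASKKM} satisfies $GG_1 + FH_1 \in W^5$, which is exactly the condition $G_1 \in W'$ of~\eqref{equation3.5} (with $s=F$, $t=G$); moreover the construction there simultaneously produces a kernel element of the matrix $M''$ whose leftmost $g\times g = 3\times 3$ block is invertible (the invertibility being precisely the content of the inversion carried out in that section). Hence $W' \intersect W^{2g-1} = W' \intersect W^5 = 0$, and Proposition~\ref{proposition3.5} applies: $D$ was indeed typical (which we already knew, but this confirms consistency), $W' = W^7_A$, and the flip $A$ is semi-typical. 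Since $\{F,G\}$ is an IGS for $D$ by Proposition~\ref{proposition2.5}(3), the colon-ideal formula~\eqref{equation3.4} is valid, so any $\ell\in W'$ with $G\ell \in FW^8 + W^5$ automatically has $G\ell \in FW^8$ because the discrepancy lies in $(F\Raff + G\Raff)\intersect W^5 = I_D \intersect W^5 = W^5_D = 0$; applied to $\ell = G_1$ this gives $GG_1 + FH_1 = 0$, which is assertion~(i).

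Next I would deduce~(ii) and~(iii). We have $F \in W^6_A$ with $\deg F = 6 = d' + g$ where $d' = \deg A = 3$ (the degree follows from $\Divisor F = D + A$ and $\deg F = 6$), and $G_1 \in W^7_A = W'$; I would note that $\deg G_1 = 7$ since otherwise $G_1 \in W^6$ would force $W^6_A$ to be at least two-dimensional, contradicting part~(1) of Proposition~\ref{proposition2.6} applied to the semi-typical divisor $A$ (which gives $\dim W^{d'+g}_A = \dim W^6_A = 1$). Thus $F, G_1$ play the roles of $s', t'$ for $A$, and since $A$ is semi-typical, Proposition~\ref{proposition3.5} (or directly the argument inside its proof) shows $\{F,G_1\}$ is an IGS for $A$ — equivalently, in the $C_{3,4}$ language of Proposition~\ref{proposition2.9}, $F = x^2 + ay + \cdots$ with $a \neq 0$ already exhibits $A$ as typical, giving~(iii). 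Here I would also invoke that $H_A = a^{-1}(yF - xG_1) \in W^8_A$ exists, confirming typicality of $A$ via Proposition~\ref{proposition2.9}(2).

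For~(iv) and~(v) I would argue on the level of divisors of functions. From $GG_1 = -FH_1$ we get an equality of affine divisors $\Divisor G + \Divisor G_1 = \Divisor F + \Divisor H_1$, i.e. $(D+B) + \Divisor G_1 = (D+A) + \Divisor H_1$, so $B + \Divisor G_1 = A + \Divisor H_1$. Since $F, G_1$ generate $I_A$, the divisor $A$ is contained in $\Divisor G_1$; write $\Divisor G_1 = A + E$ with $E$ effective of degree $\deg G_1 - \deg A = 7 - 3 = 4$. Substituting back, $B + A + E = A + \Divisor H_1$, so $\Divisor H_1 = B + E$, as claimed. For disjointness of $A$ and $B$: these are the complementary divisors for the IGS $\{F, G\}$ of the \emph{typical} divisor $D$, so disjointness is exactly the IGS statement of Proposition~\ref{proposition2.5}(3), as noted in Subsection~\ref{subsection3.2}. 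For disjointness of $D$ and $E$: if a point $P$ lay on both, then $P$ would be a common zero of $G_1$ (vanishing on $A + E$, hence on $E$) and of $F$... but wait, $F$ vanishes on $D$, so $P$ would be a common zero of $F$ and $G_1$, hence $P \in A$; combined with $P \in D$ this contradicts the disjointness of $A$ and $D$, which holds since $\Divisor F = D + A$ shows $D$ and $A$ are the two complementary pieces of a reduced/IGS decomposition — more carefully, $P\in D\cap A$ would make $F$ vanish to order $\geq 2$ at $P$ while $\Divisor F = D+A$ with $D,A$ each simple there only if... I would instead argue cleanly: $P \in D \cap E$ forces $P \in \Divisor G_1 \cap \Divisor G = (A+E)\cap(D+B)$, and since $P\in D$, if also $P\in A$ then $F = x^2+ay+\cdots$ would vanish at $P$ along with $G_1$ forcing a common zero of the IGS of $A$ at a point of $D$, i.e. $D$ and $A$ share $P$; but $\Divisor F = D+A$ is an effective divisor of degree $6$ with $\deg D = \deg A = 3$, and a shared point would be automatically consistent — so this route needs care.

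The main obstacle, as the last paragraph signals, is pinning down the disjointness statements in~(v) cleanly, specifically that $D$ and $E$ are disjoint. The cleanest fix I would use: run the \emph{flip of the flip}. By Proposition~\ref{proposition3.5}'s partial converse applied to the typical divisor $A$ with the IGS $\{F, G_1\}$, flipping $A$ recovers $D$ as the complementary divisor of $F$ (since $\Divisor F = A + D$), so $\{F, G_1\}$ being an IGS for $A$ means $A$ and $E$ — where $\Divisor G_1 = A + E$ — are disjoint, and by the same token applied to the original $D$ with IGS $\{F,G\}$ we get $D$ and $B$ disjoint and $A, B$ disjoint. Then $D \cap E$: from $\Divisor G_1 = A+E$ and $\Divisor H_1 = B + E$ with $GG_1 = -FH_1$, one sees $E = \gcd$-type common part, and $D$ is disjoint from $E$ because $E \subset \Divisor H_1$ and $E \subset \Divisor G_1$, while $\Divisor G_1 \cap \Divisor G = \Divisor G_1 \cap (D + B)$ has no component over $D$ unless $D$ meets $\Divisor G_1 = A + E$; since $D \cap A = \emptyset$ (complementary pieces of the reduced divisor $\Divisor F$, using Corollary~\ref{corollary3.5.5} or directly that $\{F,G_1\}$ IGS for $A$ forces $A$ and its flip $D$ disjoint), we get $D \cap E = \emptyset$. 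This last chain — organizing the four divisors $A, B, D, E$ around the two relations $\Divisor F = D+A$, $\Divisor G = D+B$, $\Divisor G_1 = A+E$, $\Divisor H_1 = B+E$ and the syzygy $GG_1 + FH_1 = 0$, and reading off all pairwise disjointnesses from the IGS property — is the delicate bookkeeping that constitutes the real work of the proof; everything else is a direct citation of Propositions~\ref{proposition3.5} and~\ref{proposition2.9}.
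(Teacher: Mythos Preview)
Your treatment of parts (i)--(iv) is correct and follows the paper's approach: invoke the discussion preceding the proposition (equivalently, the partial converse in Proposition~\ref{proposition3.5}) to get $G_1 \in W^7_A$ and $GG_1 + FH_1 \in W^5_D = 0$, then use Proposition~\ref{proposition2.9} with $a \neq 0$ to conclude $A$ is typical and $\{F,G_1\}$ is an IGS, and read off $\Divisor G_1 = A+E$, $\Divisor H_1 = B+E$ from $FH_1 = -GG_1$. (A minor point: you do not need to invoke any invertible $3\times 3$ block of an $M''$ here; since $D$ is \emph{given} as typical, the partial converse in Proposition~\ref{proposition3.5} already hands you $W' = W^7_A$ for free.)

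The genuine gap is in (v), the disjointness of $D$ and $E$. Your set-theoretic argument reduces this to the claim $D \cap A = \emptyset$, which you then try to justify via Corollary~\ref{corollary3.5.5} or via ``IGS for $A$ forces $A$ and its flip $D$ disjoint''. Neither works: Corollary~\ref{corollary3.5.5} only says $D$ is reduced along $\Pinf$, which is unrelated to $D \cap A$; and the IGS property of $\{F,G_1\}$ for $A$ says nothing about $A$ meeting $D$. In fact $D \cap A$ need not be empty in general, so this route is blocked.

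The paper's one-line fix is to work with \emph{multiplicities} rather than supports. The IGS statement $F\Raff + G_1\Raff = I_A$ says precisely that $\gcd(\Divisor F, \Divisor G_1) = A$ as divisors, i.e., $\gcd(D+A,\, A+E) = A$. Pointwise this reads
\[
\min\bigl(v_P(D)+v_P(A),\; v_P(A)+v_P(E)\bigr) = v_P(A),
\]
hence $\min(v_P(D), v_P(E)) = 0$ for every $P$, which is exactly the disjointness of $D$ and $E$. No information about $D \cap A$ is needed. (The disjointness of $A$ and $B$, which you correctly attribute to $\{F,G\}$ being an IGS for $D$, follows by the identical argument.)
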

\begin{proof}
By the discussion preceding the proposition, we have $F \in W^6_A$ and
$G_1 \in W^7_A$, and (i) holds.  Moreover, $F = x^2 + ay + bx + c$
with $a \neq 0$, so we deduce from Proposition~\ref{proposition2.9}
that $A$ is typical.  This yields (ii) and (iii).  The divisor of
$G_1 \in W^7_A$ must have the form $A+E$, and 
$\{F,G_1\}$ are an IGS for $A$, so $E$ is disjoint from $D$.  Finally,
the divisor of $H_1$ follows from the fact that $FH_1 = -GG_1$ has
divisor $\Divisor G + \Divisor G_1 = D + B + A + E$.  This shows (iv)
and (v), and completes the proof.
\end{proof}

\begin{corollary}
\label{corollary3.8.5}
In the $C_{3,4}$ case, a
divisor $D$ is typical if and only if its ``flip'' $A$ is typical.
\end{corollary}
\begin{proof}
We have just seen this in case $d=3$, since both $D$
and $A$ share the same $F\in W^6$ with $a \neq 0$; this uses
Part~(2) of Proposition~\ref{proposition2.9}.  For a higher
degree divisor, the result follows from Corollary~\ref{corollary2.8},
since $D - d\Pinf$ is equivalent to a divisor of the form
$D' - 3\Pinf$, and $D$ and $D'$ will have the same ``flip''.
\end{proof}

For arbitrary $C$, we suspect that typicality is not preserved by
flipping.  However, in the situation generalizing
Proposition~\ref{proposition3.8}, we are likely in practice to
encounter a divisor $D$ with $\deg D=g$ as the result of flipping a 
previous divisor $\tilde{D}$; see for example Sections 10 and~11
of~\cite{FASKKM}, where our current triple $(\tilde{D},D,A)$
corresponds to $(D+D',D'',D''')$ in that article.

\begin{proposition}
\label{proposition3.9}
Let $C$ be arbitrary.  Suppose that $D$ is a semi-typical divisor with
$d = g$, and take as usual $F \in W^{2g}_D, G \in W^{2g+1}_D$ with 
$\Divisor F = D+A$, $\Divisor G = D+B$.
Assume that $D$ was originally obtained as
a successful flip of a divisor $\tilde{D}$, using the algorithm in
Proposition~\ref{proposition3.6}.  Suppose we now use the algorithm a
second time, and it successfully computes
$W^N_A$ for some $N \geq 2g+1$.  Then $A$ and $D$ are both
typical, and the echelon basis for $W^N_A$ computed by
our second application of the algorithm begins with the same element
$F \in W^{2g}_A$, and a new element $G_1 \in W^{2g+1}_A$.  As a
byproduct of this second application, based on~\eqref{equation3.5}
(and using $(F,G)$ for $(s,t)$), we also obtain an element $H_1 \in
W^{2g+2}$ for which $GG_1 + FH_1 \in W^{2g-1}$.  Then the conclusions
of~Proposition~\ref{proposition3.8} hold, with the slight
modification that $\deg A = \deg D = g$ and $\deg B = \deg E = g+1$.

Conversely, if $D$ is obtained as the flip of $\tilde{D}$ as above,
and $D$ is typical, then the second flip that computes $A$ will
succeed using the algorithm in Proposition~\ref{proposition3.6}.
\end{proposition}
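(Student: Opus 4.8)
The plan is to pin down typicality of both $D$ and $A$, and then to transcribe the divisor bookkeeping of Proposition~\ref{proposition3.8} to an arbitrary curve. First, since the \emph{second} run of the algorithm in Corollary~\ref{corollary3.6} succeeds (with input $D$, and with $(F,G)$ playing the roles of $(s,t)$), that corollary immediately tells us that $D$ was typical, that $A$ is semi-typical, and that $W^N_A$ is correctly computed, as the space $W'$ of~\eqref{equation3.5}. Typicality of $D$ upgrades $\{F,G\}$ to an IGS for $D$ by part~(3) of Proposition~\ref{proposition2.5}, with $\deg F = 2g$ and $\deg G = 2g+1$ (these degrees being forced, since semi-typicality of $D$ gives $W^{2g-1}_D = 0$ and $\dim W^{2g+1}_D = 2$).

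The one genuinely new ingredient --- and the reason the hypothesis that $D$ is itself a flip is indispensable --- is that $A$ is in fact \emph{typical}, not merely semi-typical; recall that for a general curve typicality need not survive even a single flip, so we must exploit that $A$ lies two flips away from $\tilde{D}$. Since the \emph{first} flip succeeded, Corollary~\ref{corollary3.6} also tells us that $\tilde{D}$ was typical. Now flipping composes to negation in $\Pic^0(C)$: with $\tilde{s}$ the (scalar-unique) nonzero element of $W^{\tilde{d}+g}_{\tilde{D}}$ used in the first flip, the full degree-zero divisor of $\tilde{s}$ is $\tilde{D} + D - (\tilde{d}+g)\Pinf$, while the full divisor of $F$ is $D + A - 2g\Pinf$; hence $[A - g\Pinf] = -[D - g\Pinf] = [\tilde{D} - \tilde{d}\Pinf]$ in $\Pic^0(C)$. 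Since $\deg A = g$, $\deg \tilde{D} = \tilde{d} \geq g$, and these two divisor classes coincide, Corollary~\ref{corollary2.8} forces $A$ to be typical; this is conclusion~(iii) of Proposition~\ref{proposition3.8}.

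With $A$ both typical and semi-typical, the echelon basis of $W^N_A = W'$ returned by the algorithm has successive degrees $2g, 2g+1, \dots, N$. Because $W^{2g}_A$ is one-dimensional and contains $F$ (as $\Divisor F = D + A \geq A$), the first basis element is a scalar multiple of $F$, and the second is a monic $G_1 \in W^{2g+1}_A$ with $\deg G_1 = 2g+1$; applying part~(3) of Proposition~\ref{proposition2.5} to the typical divisor $A$ makes $\{F,G_1\}$ an IGS for $A$, which is conclusion~(ii). For the byproduct $H_1$: since $G_1 \in W'$ and $\deg G_1 \leq N$, equation~\eqref{equation3.5} (with $(s,t) = (F,G)$ and $d = g$) gives $G G_1 \in F W^{N+1} + W^{2g-1}$, so $G G_1 + F H_1 \in W^{2g-1}$ for some $H_1 \in W^{N+1}$; comparing degrees ($\deg(GG_1) = 4g+2 > 2g-1$ while $\deg F = 2g$) forces $\deg H_1 = 2g+2$, so $H_1 \in W^{2g+2}$.

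It remains to read off conclusions (i), (iv) and (v). Since $F,G \in I_D$, the element $G G_1 + F H_1$ lies in $I_D \intersect W^{2g-1} = W^{2g-1}_D = 0$ by semi-typicality of $D$, which gives~(i). Taking divisors in the resulting identity $F H_1 = -G G_1$, i.e.\ $\Divisor F + \Divisor H_1 = \Divisor G + \Divisor G_1$, and substituting $\Divisor F = D+A$, $\Divisor G = D+B$ and $\Divisor G_1 = A+E$ (with $\deg E = g+1$ forced by $\deg G_1 = 2g+1$), yields $\Divisor H_1 = B+E$, which is~(iv). Finally, the IGS properties give $\gcd(\Divisor F, \Divisor G) = D$ and $\gcd(\Divisor F, \Divisor G_1) = A$, whence $\gcd(A,B) = 0$ and $\gcd(D,E) = 0$; thus $A$ and $B$ are disjoint, $D$ and $E$ are disjoint, and the degree count $\deg A = \deg D = g$, $\deg B = \deg E = g+1$ is immediate. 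I expect no delicate step in any of this: the lone real idea is reducing typicality of $A$ to that of $\tilde{D}$ through the double flip and Corollary~\ref{corollary2.8}, with everything else amounting to careful bookkeeping of which divisor plays which role across the two flips.
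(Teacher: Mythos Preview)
Your proof is correct and follows essentially the same approach as the paper: both argue that the two successful applications of Corollary~\ref{corollary3.6} certify $\tilde{D}$ and $D$ typical, then invoke Corollary~\ref{corollary2.8} via the linear equivalence $[A - g\Pinf] = [\tilde{D} - \tilde{d}\Pinf]$ to conclude $A$ is typical, and finally use semi-typicality of $D$ to upgrade $GG_1 + FH_1 \in W^{2g-1}$ to $GG_1 + FH_1 = 0$. Your write-up is in fact more thorough than the paper's, which simply notes that ``the rest of the proof is a similarly direct adaptation of the proof of Proposition~\ref{proposition3.8}'' without spelling out the divisor bookkeeping or the degree argument pinning $H_1$ down to $W^{2g+2}$.
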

\begin{proof}
Only the first collection of statements needs proof; the converse is
included in Proposition~\ref{proposition3.6}.

Upon successful completion of the computation, the
divisors $\tilde{D}$ and $D$ are certified to be typical, because
both the first and second uses of the algorithm are certified by
Proposition~\ref{proposition3.6}.  Thus $A$ is also 
typical by Corollary~\ref{corollary2.8}, because $A - g\Pinf$ is
linearly equivalent to $\tilde{D} - \tilde{d}\Pinf$, both being
linearly equivalent to $g\Pinf - D$ (the ``negation'' of the class
$[D-g\Pinf]$).  We have $GG_1 + FH_1 \in W^{2g-1}$
from~\eqref{equation3.5}, and as usual $GG_1 + FH_1 \in G\Raff +
F\Raff = I_D$, so in fact $GG_1 + FH_1 \in W^{2g-1}_D = 0$.  The rest
of the proof is a similarly direct adaptation of the proof of
Proposition~\ref{proposition3.8}.
\end{proof}

\subsection{Doubling a divisor}
\label{subsection3.3}

Our goal in this subsection is to
compute the space $W^N_{2D}$, for suitable~$N$, when given a divisor
$D$.  For convenience, we will restrict to semi-typical $D$ of degree $d=g$.
Our computation of $W^N_{D+D'}$ via an intersection in
Subsection~\ref{subsection3.1} cannot be used directly with $D'=D$.
Instead of looking at elements of $W^N_{D'}$ which vanish at $D$, we
can proceed as in Section~5 of~\cite{FASKKM},
where we set up a system of equations for $W^N_{2D}$ to compute
sections $\ell \in I_D$ whose
differential $d\ell$ also vanishes at $D$.  In this article, we
set up exactly the same system of equations, but justify
correctness of the equations from two new perspectives.  We
believe that both the old and the two new points of view have value,
and we encourage the reader to compare the treatment
here with the one in~\cite{FASKKM}.  The following is the system of
equations and the analogous algorithm to our previous article.

\begin{proposition}
\label{proposition3.10}
Suppose that $D$ is semi-typical of degree $g$.  Let $D$ be
described as usual by $F \in W^{2g}_D$, $G \in W^{2g+1}_D$ (with
$\deg F = 2g$, $\deg G = 2g+1$), and write
$\Divisor F = D + A$, $\Divisor G = D + B$.  Now suppose, similarly to
Proposition~\ref{proposition3.9}, that we successfully use the
flipping algorithm of Proposition~\ref{proposition3.6} to compute the basis
$\{F, G_1\}$ of $W^{2g+1}_A$, alongside $H_1 \in W^{2g+2}$ for which
$GG_1 + FH_1 = 0$.  As before, write $\Divisor G_1 = A+E$,
$\Divisor H_1 = B+E$.  At this point, the success of the flipping
algorithm guarantees that $A$ is semi-typical and $D$ is typical, so
in particular $A$ and $B$ are disjoint; however, unlike
Proposition~\ref{proposition3.9}, we do not assume that we obtained
$D$ as the successful flip of some $\tilde{D}$, so possibly $A$ might
not be typical, and we cannot assert (yet) that $D$ and $E$ are disjoint.

Let $N \geq 3g-1$, and compute the space $W''$ defined by
\begin{equation}
\label{equation3.6}
W'' = \{\ell = aF + bG \in W^N_D \mid a,b \in \Raff \text{ with }
\ell' := aG_1 - bH_1 \in W_D^{N+1}\}.
\end{equation}
Moreover, suppose that our computation also yields
$\hat{W} := W'' \intersect W^{3g-1}$.
If $\hat{W} = 0$, then $A$ is also typical, $W''$ correctly computes
$W^N_{2D}$, and $2D$ is semi-typical.

Conversely, if $A$ is typical and $2D$ is semi-typical, then $\hat{W}
= 0$, and the above algorithm succeeds and correctly computes $W^N_{2D}$.
\end{proposition}
\begin{proof}
Before we begin, observe that $\ell' = (G_1/F)\ell$, because $GG_1 =
-FH_1$; also observe that the full divisor (i.e., including $\Pinf$) of
$\ell'/\ell = G_1/F$ is 
$\Divisor G_1 - \Divisor F = (A+E-(2g+1)\Pinf) - (A+D - 2g\Pinf)
  = E - D - \Pinf$.
In particular, $\deg \ell' = \deg \ell + 1$ with $\deg \ell \leq N$,
so $\ell' \in W^{N+1}$ automatically; the significant condition on
$\ell'$ is that it should belong to $I_D$.

We give two proofs of our result.  For the first proof, we know that
$F\Raff = I_{D+A}$ and $F\Raff + G\Raff = I_D$.  As $D$ and $E$ are
not necessarily disjoint, let $A' = \gcd(D,E)$, and write
$D = A' + D'$, $E = A' + E'$, with $D',E'$ disjoint.  Then
$F\Raff + G_1\Raff = I_{A+A'}$.
Now an element $\ell$ 
belongs to $W''$ if and only if it satisfies the following conditions:
(i) $\ell \in W^N$, or equivalently $\ell' \in W^{N+1}$;
(ii) $\ell \in I_D$; and 
(iii) $\ell G_1 =  F\ell'$ belongs to $FI_D = I_{2D+A}$.  
Note however that condition (ii) is equivalent to
having $\ell F \in FI_D = I_{2D+A}$.  Thus we see that conditions (ii)
and~(iii) mean that $\ell$ belongs to a certain colon ideal, namely
$\ell \in (I_{2D+A} : F\Raff + G_1\Raff) = (I_{2D+A} : I_{A+A'}) = I_{D+D'}$.
Thus $W'' = W^N_{D+D'}$, and $\hat{W} = W^{3g-1}_{D+D'}$.  Now the
fact that $\hat{W} = 0$ forces $\deg (D+D') \geq 2g$, which means that
necessarily $D=D'$, $A'=0$, and $D,E$ are disjoint.  Thus we have
successfully computed $W^N_{2D}$, with $W^{3g-1}_{2D} = 0$, i.e., $2D$
is semi-typical.  Finally, to see that $A$ is typical, we argue as in
Proposition~\ref{proposition2.8.4} that the inclusion $FW^{2g} +
G_1W^{2g-1} \subset W^{4g}_A$ must be an equality.  As usual, it is
enough by dimension-counting to check that
$FW^{2g} \intersect G_1W^{2g-1} = 0$.  But this intersection is
precisely $W^{4g}_{A+D} \intersect W^{4g}_{A+E} = W^{4g}_{A+D+E}$,
because we know that $D$ and $E$ are disjoint.  Moreover,
this last space is isomorphic (via division by $G_1$) to
$W^{2g-1}_D = 0$.  After this, we again argue that
$W^{4g}_A + W^{2g-1} = W^{4g}$ since $A$ is semi-typical.

Conversely, suppose we know from the start that $A$ is typical.  Then
$D,E$ are necessarily disjoint, and $F\Raff + G_1\Raff = I_{A}$, so we
immediately obtain $W'' =  W^N_{2D}$ and $\hat{W} = W^{3g-1}_{2D} = 0$
by semi-typicality of $2D$.  Our first proof is now complete.

Our second proof, which we generalize below, is to consider the
definition of $W''$ as a system of equations for $\ell'$ instead of for
$\ell$.  From our knowledge of the divisor of $\ell'/\ell = G_1/F$, we see
that the condition $\ell \in W^N_D$ corresponds to $\ell' \in W^{N+1}_E$;
hence our calculation is equivalent to 
computing $W''' = \{\ell' \in W^{N+1}_E \mid \ell' \in W^{N+1}_D\} =
W^{N+1}_E \intersect W^{N+1}_D$.  This intersection is $W^{N+1}_{D+E}$
precisely when $D$ and $E$ are disjoint, which can be certified by the
condition $\hat{W} = 0$, since this condition is equivalent to $W'''
\intersect W^{3g} = 0$, which by Proposition~\ref{proposition3.1}
allows us to conclude that $D,E$ are disjoint and that $D+E$ is
semi-typical.  Now $[E] = [D+\Pinf]$, so $[D+E] = 
[2D+\Pinf]$, hence Corollary~\ref{corollary2.8} implies that
$2D$ is semi-typical if and only if $D+E$ is.  Finally,
$W'' = (F/G_1)W''' = (F/G_1)W^{N+1}_{D+E} = W^N_{2D}$, as desired.
The remaining assertions follow similarly to the first proof above.
So, in essence, we use $G_1/F$ to move between the 
class of $D-g\Pinf$ and the equivalent class of $E-(g+1)\Pinf$, and we
replace doubling $D$ with adding $D+E$, which can be carried out using our
earlier methods.  We then move back within the equivalence class to $2D$.
\end{proof}

We make some remarks on how one computes the space $W''$ in practice.
Since $D$ is typical, we have a direct sum
$W^{2g+i}F + W^{2g-1}G = W^{4g+i}_D$ for $i \geq 0$.
This allows us to proceed smoothly if $N = 4g+i \geq 4g$, by taking all
$a \in W^{2g+i}$ and $b \in W^{2g-1}$, but in practice we
want $N = 3g+1$.  In that case, in setting up a system of equations
for $\ell = aF + bG$, we must restrict the possible values of pairs
$(a,b) \in W^{2g} \times W^{2g-1}$ to ensure that $\deg \ell \leq N$.

\begin{example}
\label{example3.10.5}
In the $C_{3,4}$ case, we saw in 
Proposition~\ref{proposition2.9} that $W^{12}_D = FW^6 + GW^5$ has a basis
$\{F,G, yF-xG, xF, xG, yG, x^2F\}$, ordered by degree, of which the
first five elements are a basis for $W^{10}_D$.  Hence the pairs
$(a,b)$ to consider are $\fieldK$-linear combinations of 
$\{(1,0),(0,1),(y,-x),(x,0),(0,x)\}$.  In Sections 5 and~6 of~\cite{FASKKM},
when $D=D'$, we looked for elements $\ell \in W''$ of the form
$\ell = c'_1 F + c'_2 G + c'_3(yF-xG) + c'_4 xF + c'_5 xG$.  Now the
corresponding $\ell'$ is
$\ell' =  c'_1 G_1 - c'_2 H_1 + c'_3(yG_1+xH_1) + c'_4 xG_1 - c'_5 xH_1$,
and we want to set up a system of equations that ensures that
$\ell'$ has zero image in the three-dimensional quotient $W^{11}/W^{11}_D$.
This is exactly the kernel of the matrix $M'$ in Section~6
of~\cite{FASKKM}.  Moreover,
the first three columns of $M'$ correspond to taking $\ell$ to be a linear
combination of $F, G, yF-xG$, or respectively to $\ell'$ being a linear
combination of $G_1, -H_1, yG_1 + xH_1$, and invertibility of the leftmost
$3\times 3$ submatrix of $M'$ is exactly the condition that
$0 = W'' \intersect \linalgspan(F, G, yF - xG) = W'' \intersect W^8_D$, or
respectively that $0 = W''' \intersect W^9_E$.  This is exactly what we
need to apply Proposition~\ref{proposition3.10}.
\end{example}
Combining the above with Proposition~\ref{proposition3.8} to compute
first $\{G_1, H_1\}$, we obtain:
\begin{proposition}
\label{proposition3.11}
In the $C_{3,4}$ case, let $D$ be a typical degree $3$ divisor.  If the
computations in Sections 5, 6, and~7 of~\cite{FASKKM} can be carried out,
including the inversion in Proposition~6.1 of that article, then the
computation succeeds, and correctly returns $s \in W^9_{2D}, t \in
W^{10}_{2D}$, if and only if $2D$ is semi-typical.  In all cases, the
kernel of $M'$ will compute the space $W^{10}_{2D}$.
\end{proposition}
Generalizing this method from the $C_{3,4}$ case to an arbitrary curve
$C$ is straightforward, once one takes into account some possibly more
complicated conditions on the pairs $(a,b) \in W^{2g} \times W^{2g-1}$
that one wishes to consider in the system of equations.  We will leave
the details of a general algorithm in that case to the reader.

We now turn to our second method for doubling.  This time, we begin
with a divisor $\tilde{D}$ whose flip will be the divisor $D$ that we
wish to double.  This is analogous to combining
Proposition~\ref{proposition3.9} with the second proof in
Proposition~\ref{proposition3.10}.  The idea is to combine
a slight extension of the flipping algorithm that produces
$D$ with the ideas of Proposition~\ref{proposition3.10}, in a way that
obtains both $W^{N+1}_D$ and $W^{N+1}_E$ for a 
suitable $N$; here $E$ is the same as in
Proposition~\ref{proposition3.10}, using $\tilde{D}$ instead of $A$.
The intersection $W^{N+1}_D \intersect W^{N+1}_E = W^{N+1}_{D+E}$ can
then be transferred back to give $W^N_{2D}$.

\begin{proposition}
\label{proposition3.12}
Let $\tilde{D}$ be a semi-typical divisor of degree $\tilde{d}$, and
assume given $\tilde{s} \in W^{\tilde{d}+g}_{\tilde{D}}$,
$\tilde{t} \in W^{\tilde{d}+g+1}_{\tilde{D}}$ as usual.  Write
$\Divisor \tilde{s}  = D + \tilde{D}$ and
$\Divisor \tilde{t} = E + \tilde{D}$,
with $\deg D = g$ and $\deg E = g+1$.
For $N \geq 3g-1$, the following algorithm, if 
successful, simultaneously computes, for both the flipped divisor $D$
and its double $2D$, the spaces
$W^{N+1}_D$ and $W^{N}_{2D}$.  The algorithm succeeds in computing
$W^{N+1}_D$ and $W^{N}_{2D}$ if and only if $\tilde{D}$ is typical,
and correctly certifies semi-typicality of $2D$ whenever it holds.

\smallskip 
\noindent  
\emph{Algorithm:} 
\begin{enumerate}
\item
Set up a system of equations that computes the space of pairs
\begin{equation}
\label{equation3.7}
\tilde{W} = 
\{(\ell,\ell') \in W^{N+1} \times W^{N+2} \mid
   \tilde{t} \ell + \tilde{s} \ell' \in W^{\tilde{d}+g-1} \}.
\end{equation}
Concretely, make an $(N+3) \times (2N - 2g + 5)$ matrix $\tilde{M}$
concatenating matrices similar to those produced by Steps 1
and~2 of the algorithm in Proposition~\ref{proposition3.6}.  More
precisely, the columns
of $\tilde{M}$ consist of $N-g+3$ columns representing the images
(in $W^{N+\tilde{d}+g+2}/W^{\tilde{d}+g-1}$) of a basis for
$\tilde{s} W^{N+2}$, and of $N-g+2$ other columns representing the
images  of a basis for $\tilde{t} W^{N+1}$.  The matrix should be
computed in terms of products $\tilde{s}m$ (respectively,
$\tilde{t}m$) over various ``monomials'' $m$, indexed by degree, that
give a basis for $W^{N+2}$ (respectively, $W^{N+1}$), and the products
should themselves also be expressed in terms of a basis of monomials
describing $W^{N+\tilde{d}+g+2}/W^{\tilde{d}+g-1}$.
This is analogous to the entire matrix $N'$ in Section~8 of~\cite{FASKKM}.
\item
Find $\ker \tilde{M}$ (i.e., the space $\tilde{W}$) in a way that
simultaneously verifies that a 
certain $(2g+1)\times(2g+1)$ submatrix of $\tilde{M}$ is invertible.
Specifically, detect whether the columns describing
$\{ \tilde{s}m \mid \deg m \leq 2g \} \union
\{ \tilde{t}m \mid \deg m \leq 2g-1 \}$
are linearly independent; note that these columns all correspond to
elements of
$W^{\tilde{d}+3g}/W^{\tilde{d}+g-1}
\subset W^{N+\tilde{d}+g+2}/W^{\tilde{d}+g-1}$,
so only the top $2g+1$ entries are nonzero.  In practice, these
columns should be placed as the leftmost columns of~$\tilde{M}$, and
one computes an echelon basis for $\ker \tilde{M}$.
If these columns are not linearly independent, then return ``fail''.
\item
Success at the previous step certifies that $\tilde{D}$ is typical, hence
that $D$ (and also $E$) is semi-typical; every pair
$(\ell,\ell') \in \tilde{W}$ actually satisfies $\tilde{t} \ell +
\tilde{s} \ell' = 0$.  With respect to a suitable ordering by degree,
one can find an echelon basis for $\tilde{W}$ of the form 
$\{(\ell_j,\ell'_j) \mid 1 \leq j \leq N - 2g + 2\}$, with
$\deg \ell_j = 2g-1+j$, $\deg \ell'_j = 2g - j$.  Moreover,
$\{\ell_1, \dots, \ell_{N-2g+2}\}$ is a basis for $W^{N+1}_D$, while
$\{\ell'_1, \dots, \ell'_{N-2g+2}\}$ is a basis for $W^{N+2}_E$.
Discarding $\ell'_{N-2g+2}$, we actually have that 
$\{\ell'_1, \dots, \ell'_{N-2g+1}\}$ is a basis for $W^{N+1}_E$.
\item
Compute $W^{N+1}_{D+E}$ as the intersection
$W^{N+1}_D \intersect W^{N+1}_E$, in a way that also identifies
$\hat{W} = W^{3g}_{D+E}$ (this is easy to do with echelon bases).
Then $\hat{W} = 0$ if and only if $D+E$, equivalently $2D$, is
semi-typical.
In practice, one should compute the intersection by computing
the space of tuples
$(c_1,\dots, c_{N-2g+1})$ which satisfy
$\sum_j c_j \ell'_j = 0 \in W^{N+1}/W^{N+1}_D$.
\item
For each tuple $(c_j)$ as above (take a basis of the space of such
tuples), compute $\sum_j c_j \ell_j \in W^{N}_{2D}$.  The collection
of such $\sum_j c_j \ell_j$ gives a basis for $W^{N}_{2D}$, which can
be arranged to be in echelon form due to our control of the degrees
$\deg \ell_j$.
\end{enumerate}
\end{proposition}

\begin{proof}
If our computation passes Step~(2), we conclude that
$\tilde{s} W^{2g} + \tilde{t} W^{2g-1}$ generates all of
$W^{\tilde{d}+3g}/W^{\tilde{d}+g-1}$, from which it follows that
$\tilde{D}$ is typical and that $D,E$ are disjoint.  Moreover, $D$ is
semi-typical, being the flip of the typical divisor $\tilde{D}$, and
$E$ is also typical by Corollary~\ref{corollary2.8} (using
the fact that the full divisor $\Divisor(\tilde{t}/\tilde{s})$ is
$E-D-\Pinf$).  Moreover, all combinations
$\tilde{t}\ell + \tilde{s}\ell'$ belong to $I_{\tilde{D}}$, so a pair
$(\ell,\ell') \in \tilde{W}$ must satisfy
$\tilde{t}\ell + \tilde{s}\ell' \in W^{\tilde{d}+g-1}_{\tilde{D}} = 0$.
This proves the first assertions of~(3).

The next assertions of~(3) boil down to observing that projecting from
$(\ell,\ell') \in \tilde{W}$ to the $\ell$ component is
equivalent to our usual algorithm for flipping $\tilde{D}$  to find
$W^{N+1}_D$, as in~\eqref{equation3.5}; similarly for having the
$\ell'$ compute $W^{N+2}_E$.  Another way to see this last fact is
that $\tilde{W}$ is the graph of the bijection
$\ell \mapsto \ell' = -(\tilde{t}/\tilde{s})\ell$ between $W^{N+1}_D$
and $W^{N+2}_E$.  By semi-typicality of either of these spaces, there
exist corresponding ``triangular'' bases of elements $\ell_j$ or
$\ell'_j$ with the degrees that we claim.  Hence (3)~is now proved.

The intersection in~(4) is $W^{N+1}_{D+E}$ as claimed, since we have
already observed that $D$ and $E$ are disjoint; the comment on
semi-typicality of $D+E$ is immediate, as is the fact that $2D$ will
then also be semi-typical by Corollary~\ref{corollary2.8}.  Finally, the
correspondence in~(5) between
$\sum_j c_j \ell'_j \in W^{N+1}_E \intersect W^{N+1}_D
  = W^{N+1}_{D+E}$
and the corresponding $\sum_j c_j \ell_j$ is precisely multiplication
by $-(\tilde{s}/\tilde{t})$, which transforms the space
$W^{N+1}_{D+E}$ into the space $W^N_{2D}$.
\end{proof}

\section{Jacobian arithmetic for typical divisor classes}
\label{section4}

In this section, we assemble the results from the previous section,
with specific choices of parameters, to give algorithms for typical
divisor classes that work for the Jacobian of an arbitrary curve $C$
with a rational point $\Pinf$.

Before doing so, we collect here the final statement of our results for the
special case of $C_{3,4}$ curves and our previous algorithms:

\begin{theorem}
\label{theorem4.1}
The algorithms of~\cite{FASKKM} for addition and doubling in the Jacobian
of a $C_{3,4}$ curve work correctly with typical
divisors as input, and yield typical divisors as output, if and only
if the two $\fieldK$-inversions in each algorithm can be carried out. 
\end{theorem}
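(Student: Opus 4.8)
The plan is to assemble Theorem~\ref{theorem4.1} directly from the
$C_{3,4}$-specific corollaries and propositions already proved in
Section~\ref{section3}, since each of those results pins down the
behavior of one of the algorithmic ingredients of~\cite{FASKKM} in terms
of a single $\fieldK$-inversion. The addition algorithm of~\cite{FASKKM}
is the computation of $W^{10}_{D+D'}$ by intersecting $W^{10}_D$ and
$W^{10}_{D'}$, followed by a flip to reduce $D+D'$ back to degree~$3$;
the doubling algorithm replaces the intersection step by the
``derivative'' system of Section~5 of~\cite{FASKKM}, again followed by a
flip. So the first thing I would do is recall this decomposition
explicitly: each of the two algorithms consists of (a) a ``combine''
step producing $s\in W^9, t\in W^{10}$ for the degree-$6$ divisor $D+D'$
(resp.\ $2D$), using one inversion, and (b) a ``flip'' step producing a
typical degree-$3$ divisor, using the second inversion.

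Next I would feed the hypotheses into the relevant results in order.
For addition: Proposition~\ref{proposition3.3} says that if the
inversion in Proposition~6.1 of~\cite{FASKKM} succeeds, then the combine
step correctly produces $s,t\in I_{D+D'}$ and $D+D'$ is semi-typical;
then the flip of this degree-$6$ semi-typical divisor is handled by
Proposition~\ref{proposition3.7}, which says that if the second
inversion (of $\beta_2\gamma_4$) succeeds, the output is a typical
degree-$3$ divisor. For doubling: Corollary~\ref{corollary3.11} plays
the role of Proposition~\ref{proposition3.3}, giving $s\in W^9_{2D},
t\in W^{10}_{2D}$ and semi-typicality of $2D$ when the first inversion
succeeds; then again Proposition~\ref{proposition3.7} certifies that the
subsequent flip yields a typical output when the second inversion
succeeds. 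Chaining these gives exactly the statement: typical input,
two successful inversions, typical output, correctness guaranteed.

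The one genuinely delicate point — and the step I expect to be the main
obstacle — is bookkeeping the correspondence between ``the two
$\fieldK$-inversions in each algorithm'' as phrased in~\cite{FASKKM} and
the inversions named in the propositions above. In~\cite{FASKKM} several
subdeterminants (the minors $A_1$, $D$, $U$ of Section~6, and
$\beta_2,\gamma_4$ of Section~9) are inverted simultaneously by a single
field inversion plus multiplications, so I need to argue that ``the
inversion can be carried out'' is equivalent to the nonvanishing of
precisely the minor that Proposition~\ref{proposition3.3},
Corollary~\ref{corollary3.11}, and Proposition~\ref{proposition3.7}
each require; this is essentially the observation already made in
Example~\ref{example3.2} and in the discussion preceding
Proposition~\ref{proposition3.7}, that the invertibility of the relevant
leftmost submatrix is what the Gaussian-elimination-based kernel
computation is really testing. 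I would also note that the flip step in
the addition algorithm is applied to $D+D'$ with $D\neq D'$ while in
doubling it is applied to $2D$; in both cases the input is a semi-typical
divisor of degree $6=2g$ equipped with its $s\in W^9,t\in W^{10}$, which
is exactly the hypothesis of Proposition~\ref{proposition3.7}, so the
same flip analysis applies verbatim. Once these identifications are
spelled out, the theorem is an immediate corollary, and I would close by
remarking that Corollary~\ref{corollary3.5.5} (typical $\Rightarrow$
reduced) ensures the degree-$3$ output is the unique reduced
representative of its class, so the algorithms genuinely implement
Jacobian group arithmetic.
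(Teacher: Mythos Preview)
Your decomposition into a ``combine'' step (Proposition~\ref{proposition3.3} or Corollary~\ref{corollary3.11}) followed by a ``flip'' step (Proposition~\ref{proposition3.7}) is exactly what the paper does, and your identification of which field inversion governs each step is correct. However, you have stopped one step too early. The flip of $D+D'$ (respectively $2D$) produces the divisor $D''$, which represents the \emph{addflip} class $-([D-g\Pinf]+[D'-g\Pinf])$, not the sum. The addition and doubling algorithms of~\cite{FASKKM} go on (in Section~10 of that article) to negate $D''$ and produce the divisor $D'''$ representing the actual sum; it is $D'''$, not $D''$, that is the output whose typicality must be certified.

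The paper closes this gap by invoking Proposition~\ref{proposition3.8}: once $D''$ is known to be typical (from Proposition~\ref{proposition3.7}), the computation of Section~10 of~\cite{FASKKM} correctly produces the flip $D'''$ of $D''$, and $D'''$ is again typical. This final step requires no new field inversion (the needed $a''^{-1}$ comes from the $\gamma_4^{-1}$ already computed), which is why your inversion count is right, but the correctness and typicality of the negation still need to be argued. Your closing remark about Corollary~\ref{corollary3.5.5} addresses uniqueness of the representative, not this missing step; simply inserting Proposition~\ref{proposition3.8} for the Section~10 negation completes your argument and brings it in line with the paper's proof.
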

\begin{proof}
All divisors $D$ of degree $3$ are represented by elements $F,G$ as in
Proposition~\ref{proposition2.9}, by storing the elements $a,b,\dots, f$ as
well as the inverse $a^{-1}$.  Thus $a \neq 0$, and $D$ is typical.
Moreover, the algorithm for the ``addflip'' of two divisors ($D,D' \mapsto
D''$ with $\Divisor s = D + D' + D''$; see below) produces the correct
answer, with $D''$ typical due to $a'' \neq 0$ by Propositions
\ref{proposition3.3} and~\ref{proposition3.7}.  A similar result holds
when we compute the addflip for $D=D'$ (called ``doubleflip'' below),
by using Proposition~\ref{proposition3.11} instead of
Proposition~\ref{proposition3.3}.  The final inversion to obtain the
sum or double in the Jacobian (Section~10 of~\cite{FASKKM}) is correct
by Proposition~\ref{proposition3.8}.
\end{proof}

In the appendix, we give formulas for a speedup of the algorithms
of~\cite{FASKKM} by approximately 15\%, arising from revisiting the
previous work in light of the considerations that led to
Proposition~\ref{proposition3.12}.

We now address the generalization to arbitrary $C$.  Following the
discussion at the beginning of Section~\ref{section3}, we represent
all typical elements of $\Pic^0(C)$ as $[D - g\Pinf]$
for a unique typical divisor $D$ of degree~$g$.  We will occasionally
relax this to assume merely that $D$ is semi-typical.

As we have already discussed, ``flipping'' $D$
corresponds to negation in the Jacobian.  The other basic operation in
the Jacobian is the ``addflip'' operation, in the terminology we
introduced in~\cite{KKM}.  For the rest of this section, it is
convenient for us to separate this operation into two cases:
\begin{enumerate}
\item
The first case, which we continue to call ``addflip'', takes as
input two typical divisors $D,D'$, that one typically hopes are
disjoint.  Then we want to produce an output divisor $D''$ for which
there exists a degree $3g$ element $s \in W^{3g}_{D+D'}$ with divisor
$\Divisor s = D + D' + D''$.  This means that
$[D'' - g\Pinf] = -([D - g\Pinf] + [D' - g\Pinf])$.
\item
The second case, which we call ``doubleflip'', is the analog of
the above in the situation where $D=D'$.  Thus
$[D'' - g\Pinf] = -2[D-g\Pinf]$ in $\Pic^0(C)$, and
$s \in W^{3g}_{2D}$ satisfies $\Divisor s = 2D + D''$.
\end{enumerate}
The basic idea, of course, is that an addflip consists of carrying out
an addition, as in Subsection~\ref{subsection3.1} (producing a divisor
of degree $2g$), followed by a flip as in
Subsection~\ref{subsection3.2}, so the final answer is again a divisor
of degree~$g$.  A doubleflip is similar, except that the initial
addition is replaced by a doubling, as in
Subsection~\ref{subsection3.3}.  In doing this, we must take care to
specify (i) precise values of $N$, and (ii) choices of
algorithms to compute the spaces $W^N_E$ for various intermediate
divisors $E$; the goal in doing so is to ensure 
that we can always certify the final answer after an
addflip or a doubleflip to be typical.  Alternatively, we can
arrange to certify only that a subsequent flip of an addflip or a
doubleflip is itself typical, so that the basic operations on typical
classes in the Jacobian now become addition and doubling, with which
we must also include a certified way of doing negation.

In all the algorithms given below, if the algorithm fails, then it is
possible to use the general methods that work for all divisors
in~\cite{KKM,KKMAsymptotic}.  This will produce a constant slowdown,
since the linear algebra involves larger matrices, but is expected to
happen so very rarely in typical applications that it can be ignored.

\subsection{Direct generalization of the algorithms in~\cite{FASKKM}}
\label{subsection4.1}

We represent our typical degree $g$ divisor $D$ by a triple
$(F,G,G_1)$.  Here $D$ is determined by the pair $(F,G)$, with $F \in
W^{2g}_D, G \in W^{2g+1}_D$, and the ``flip'' $A$ of $D$ is a
degree~$g$ divisor, described by the pair $(F,G_1)$.  This
uses the same $F \in W^{2g}_A$ (because $\Divisor F = D+A$, as usual),
and $G_1 \in  W^{2g+1}_A$ is the same element as in
Proposition~\ref{proposition3.9}.  We assume that for any input to our
algorithms, representing such a divisor $D$ and its complement $A$, we
are guaranteed that $D$ and $A$ are both typical.

As written, there is some choice for $F$, $G$, and $G_1$.  For example,
$D$ determines the pair $(F,G)$ uniquely only up to replacing $(F,G)$
with $(aF,a'G+bF)$, where $a,a',b \in \fieldK$ with $a,a' \neq 0$, as
observed in Proposition~\ref{proposition2.5}. In terms of a basis of
``monomials'' for $\Raff$, we can make $F$ and $G$ unique by requiring
that (i) $F$ and $G$ are monic, and (ii) the coefficient in $G$ of the
monomial of degree $2g$ is zero.  As for the choice of $G_1$, note
that $A$ is uniquely determined by $D$ (or by $\Divisor F$), and $G_1$
can be modified in the same way as $G$, to yield a unique choice.

Putting together our previous results yields the following theorem.

\begin{theorem}
\label{theorem4.1.5}
In the above setting, we have the following algorithms for negation,
addition, and doubling of typical divisor classes, with necessary and
sufficient conditions under which the algorithms succeed.
\begin{enumerate}
\item
\textbf{Negating a divisor class:}
Replace the triple $(F,G,G_1)$ by $(F,G_1,G)$.
This exchanges the roles of $A$ and $D$, and succeeds in all cases.  Both
divisors remain typical.
\item
\textbf{Adding two (different) divisor classes:}
Briefly, carry out the algorithms in
Proposition~\ref{proposition3.4} then Proposition~\ref{proposition3.6}.
More specifically, let the two elements of the Jacobian come from divisors
$D$ and $D'$, represented by the triples $(F,G,G_1)$ and $(F',G',G'_1)$.
Carry out Step~(1) of Proposition~\ref{proposition3.4}, taking $N = 3g+1$.
In other words, compute an echelon basis of the space
$W^{4g}_D = FW^{2g} + GW^{2g-1}$, and select the first $g+2$ basis elements.
This yields a basis for the subspace $W^{3g+1}_D$; similarly,
compute $W^{3g+1}_{D'}$.  Now compute the intersection to obtain
$W^{3g+1}_{D+D'}$, as in the remaining steps of the algorithm in
Proposition~\ref{proposition3.4}.  This succeeds if and only if $D$ and
$D'$ are disjoint, and $D+D'$ is semi-typical, in which case 
we correctly obtain $s \in W^{3g}_{D+D'}, t \in W^{3g+1}_{D+D'}$.
Next, apply the flipping algorithm in Proposition~\ref{proposition3.6},
with $N= 2g+1$.  We write as usual $\Divisor s = D + D' + D''$.  The
flipping algorithm produces $\{F'',G''\}$ that describe $D''$; this
succeeds if and only if $D+D'$ is typical.  Do a further flip of $D''$ as in
Proposition~\ref{proposition3.6}, again with $N=2g+1$;
assuming this is successful, this produces a divisor
$D'''$, represented by elements $F''', G'''$, and certifies that $D''$
is typical.  We know that $D'''$ represents the same class in
$\Pic^0(C)$ as $D+D'$, so $D'''$ is typical by
Corollary~\ref{corollary2.8}.  Thus we return the triple
$(F''',G''',G'')$ (where in fact $F''' = F''$) as our representation
of the sum $[D'''-g\Pinf] = [D-g\Pinf] + [D'-g\Pinf]$.  Given that 
$D, D'$ and their flips are known to be typical before starting the
algorithm, this whole procedure succeeds if and 
only if $D,D'$ are disjoint, and $D''$ and $D'''$ are both typical.
\item
\textbf{Doubling a divisor class:}
Start with the usual input data $(F,G,G_1)$, and compute first 
$H_1 = -G G_1/F$, then $s,t \in W^{3g+1}_{2D}$ using
Proposition~\ref{proposition3.10} with $N= 3g+1$; thus $\Divisor s = 2D +
D''$.  Then do two flips, as in the algorithm for addition in~(2) above,
thereby obtaining representations of the divisors $D''$ and $D'''$ as in
that algorithm.  Success occurs if and only if $2D$ (hence also $D'''$) and
$D''$ are typical.
\end{enumerate}
\end{theorem}

\begin{remark}
\label{remark4.2}
Let us bound the probability of failure for the above two
algorithms for addition and doubling, when $\fieldK$ is a finite field
with $q$ elements, using the results we quoted from~\cite{KKMbrillnoether}
in Theorem~\ref{theorem2.8.6}.

For addition to fail, the inputs $D,D'$ and output
$D''$ must satisfy at least one of the following properties:
\begin{enumerate}
\item
The divisor $D$, or its flip, is not typical;
\item
The divisor $D'$, or its flip, is not typical;
\item
The divisor $D''$, or its flip $D'''$, is not typical;
\item
The divisors $D$ and $D'$ are not disjoint.
\end{enumerate}
The probability of at least one of these events happening is at most
the sum of their individual probabilities, which we will compute under
the uniform distribution on all triples of classes
$(x,x',x'') = ([D-g\Pinf],[D'-g\Pinf],[D''-g\Pinf]) \in (\Pic^0 C)^3$
with $x+x'+x'' = 0$.  Note first that the distribution of each of $x$,
$x'$, or $x''$ when looked at in isolation is uniform in $\Pic^0 C$.
In fact, any pair made from two of the three entries, such as
$(x,x')$, is uniformly 
distributed over $(\Pic^0 C)^2$, because this pair completely determines
the third entry (indeed, $x'' = -x-x'$).
It follows that each of events (1--3) above has probability at most
$(16^g \cdot 2g + 3.4)/q$.  Moreover, we claim that event (4) has a
probability at most $(1.7\cdot g)/q$.  This claim implies that the probability
of failure of addition is at most
$(3\cdot (16^g \cdot 2g + 3.4) + 1.7\cdot g)/q$.  For example, if
$g = 5$, then the numerator does not exceed $3.2\times 10^7$, which
means that if $q$ is, say, around $10^{30}$ (around $100$ bits), our
failure rate is below $3.2 \times 10^{-23}$.

We now explain why our claim
holds.  For this, it is enough to fix a good $D$ with $\deg D = g$,
and bound the probability that $D'$ is not disjoint from $D$.  Writing
$D = E_1 + \cdots + E_r$ as a sum of irreducible divisors with $\sum_j
\deg E_j = g$, the ``bad'' $D'$ are those of the form $E_j + E'$ with
$E'$ effective of degree $g - \deg E_j$.  Let $N_d$ denote the number
of effective degree $d$ divisors; then the number of bad $D'$ is at
most $\sum_j N_{g-\deg E_j}$.  The largest this can be is when the
$E_j$ are all distinct $\fieldK$-rational points, in which case our
upper bound for the number of bad $D'$ is $g N_{g-1}$, and the
probability of the pair $(D,D')$ not being disjoint is at most
$(g N_{g-1})/\abs{\Pic^0 C}$.  On the other hand, we know from
equation~(2.13) in Proposition~2.15 of~\cite{KKMbrillnoether} that 
$N_{g-1}/\abs{\Pic^0 C} \leq 1.7/q$, and this proves our claim.

A similar argument gives a bound for the probability that doubling will
fail.  Here the 
question is how often at least one of $D$ or $2D$ or their flips can
fail to be typical.  For uniformly random $D$, this is again at most
$(16^g \cdot 2g + 3.4)/q$.  However, the class of $2D$ is not uniform
in $\Pic^0 C$, unless $\abs{\Pic^0 C}$ happens to be odd, in which case
multiplication by~$2$ would be a bijection.  The worst-case scenario is
that the full $2$-torsion of the Jacobian is defined over $\fieldK$,
in which case multiplication by $2$ is a $2^{2g}$-to-$1$ map.  In that
case, the probability that $2D$ or its flip is not typical is at most
$2^{2g} (16^g \cdot 2g + 3.4)/q$.
(Indeed, if $B \subset \Pic^0 C$ is the bad set of elements which are
either not typical or whose flip is not typical, then the preimage of $B$
under multiplication by $2$ cannot have more than $2^{2g}\abs{B}$
elements.)  We deduce that the total probability of
failure is at most $(2^{2g} + 1)(16^g \cdot 2g + 3.4)/q$.  For our
sample parameters $g=5$ and $q \sim 10^{30}$, this probability is at
most $1.1\times 10^{-20}$.
\end{remark}

\subsection{A relative of the small model of~\cite{KKM}}
\label{subsection4.2}

We now describe a slight relaxation of the above algorithms, which is
close to the ``small model'' described in Section~5 of~\cite{KKM}.  In
this setup, we assume only that the divisor $D$ representing an
element of the Jacobian is semi-typical, but that its flip $\tilde{D}$
is typical; to guarantee correct results, some intermediate results in
our computations also need to be certified typical.  In this set of
algorithms, it occasionally helps to keep track of the penultimate
result found, in order to streamline a subsequent calculation.

In this setting, we represent a $D$ of degree $g$ by a triple
$(W^{3g+1}_D, \tilde{s}, \tilde{t})$.  The vector space $W^{3g+1}_D$
is described by an echelon basis, ordered as usual by degree, and we
assume that $D$ is semi-typical, and that $D$ is known to be equal to
the ``flip'' of a typical divisor $\tilde{D}$ of degree
$\tilde{d}$, with $\tilde{d} = g$ or~$2g$ (we also carry around the
value of $\tilde{d}$).  The other entries in the
triple, $\tilde{s} \in W^{\tilde{d}+g}_{\tilde{D}}$ and 
$\tilde{t} \in W^{\tilde{d}+g+1}_{\tilde{D}}$, describe
$\tilde{D}$ in the usual way; in particular,
$\Divisor \tilde{s} = D + \tilde{D}$.

We note here that the vector space $W^{3g+1}_D$ is always base point
free; hence this space always
determines $D$, regardless of whether $D$ is semi-typical.  In
any case, having $\tilde{s}$ and $\tilde{t}$ on hand allows us to
compute any $W^N_D$ that we might need, for example if some results
end up not being typical and we have to resort to the general-purpose
algorithms of~\cite{KKMAsymptotic}.  We also note that most of the
algorithms are valid using the base point free space $W^{3g}_D$, but
that this would lead to a longer algorithm for the intersection in the
addflip algorithm below.  We leave it to the reader to estimate the
probability of failure in these algorithms over a large finite field,
in the spirit of Remark~\ref{remark4.2}.


\begin{theorem}
\label{theorem4.3}
The following are algorithms for typical divisor classes in the above
setting.
\begin{enumerate}
\item
\textbf{Addflip of two divisor classes:}
Let the classes be represented by the divisors $D$ and $D'$.  Find the
intersection 
$W^{3g+1}_D \intersect W^{3g+1}_{D'}$, as in Steps (2)--(4) of
Proposition~\ref{proposition3.4} with $N=3g+1$.  This succeeds if and
only if $D$ and $D'$ are disjoint, and $D+D'$ is semi-typical.  We
thus obtain $s,t \in W^{3g+1}_{D+D'}$ with $\deg s = 3g$, $\deg t = 3g+1$,
and $\Divisor s = D + D' + D''$.  Second,  
compute $W^{3g+1}_{D''}$ by flipping, as in
Proposition~\ref{proposition3.6}, with $(N,D,A)$ there replaced by
$(3g+1, D+D', D'')$ here.  This step succeeds precisely when
$D+D'$ is typical, and implies that $D''$ is semi-typical.  We then
return the triple $(W^{3g+1}_{D''},s,t)$.  This whole procedure succeeds if
and only if $D,D'$ are disjoint, and $D+D'$ is typical.
\item
\textbf{Doubleflip of a divisor class:}
Given a divisor $D$ representing the class, take the pair 
$(\tilde{s}, \tilde{t})$, 
and apply Proposition~\ref{proposition3.12} with $N=3g+1$ to obtain
$s,t \in W^{3g+1}_{2D}$.  Since $\tilde{D}$ is typical, success occurs
if and only if $2D$ is 
semi-typical.  Now carry out a flip of $2D$, using
Proposition~\ref{proposition3.6} with $N=3g+1$, to produce the desired
answer  $W^{3g+1}_{D''}$, and to certify that $2D$ is typical.  If successful,
return as before $(W^{3g+1}_{D''},s,t)$.  This whole procedure succeeds if
and only if $2D$ is typical.

Remark: The algorithm as stated contains some redundancy in the form of
repeated computations.  Suppose that at the stage \emph{prior} to
starting the doubleflip, we had $\tilde{s}, \tilde{t}$ and were about
to carry out a flip to find $W^{3g+1}_D$ to obtain the full triple
describing $D$.  Then it would be desirable to have some rudimentary
lookahead to see whether $D$ will be used as an input for a
doubleflip.  If so, we can once and for all carry out the algorithm of
Proposition~\ref{proposition3.12}, instead of first ending the
previous computation with a simple flip using
Proposition~\ref{proposition3.6}.  That way, using
Proposition~\ref{proposition3.12}, we simultaneously obtain both the
space $W^{3g+1}_D$ and the elements $s,t \in W^{3g+1}_{2D}$.
\item
\textbf{Negation of a divisor class:}
Given the space $W^{3g+1}_D$, take the first two elements
$s\in W^{2g}_D, t\in W^{2g+1}_D$ of the echelon basis, with 
$\Divisor s = D+A$, and flip as in Proposition~\ref{proposition3.6} with
$N=3g+1$.  This produces the space $W^{3g+1}_A$, while certifying
that $D$ was typical to begin with (we already know that $A$ is
typical, because $\tilde{D}$ is).
Return as output the triple $(W^{3g+1}_A,s,t)$.  This procedure
succeeds if and only if $D$ is typical.
\end{enumerate}
\end{theorem}

\subsection{A nontraditional modification}
\label{subsection4.3}

As a last setting in which we can carry out generic algorithms, we describe
a change of perspective to the method of
Subsection~\ref{subsection4.2}.  Instead of representing the class
$[D - g\Pinf] \in \Pic^0(C)$ by the triple
$(W^{3g+1}_D,\tilde{s},\tilde{t})$,  we can view the pair
$(\tilde{s}, \tilde{t})$ as itself representing $D$, via the fact that
its ``opposite'' divisor $\tilde{D}$ is determined by
$I_{\tilde{D}} = \tilde{s}\Raff + \tilde{t}\Raff$.  Alternatively, we
can recover $D$ from the identity
$I_D = \{\ell \in \Raff \mid \ell \tilde{t} \in \tilde{s}\Raff\}$.
This allows us to carry out in essence the same algorithms as in
Subsection~\ref{subsection4.2}, except that now each step begins with
our flipping $(\tilde{s}, \tilde{t})$ using either
Proposition~\ref{proposition3.6} or Proposition~\ref{proposition3.12},
depending on whether we wish to carry out an addflip or a doubleflip.
Then we omit the final flip from the algorithms in the previous
subsection.  (The same technique works for negation in this model.)
Thus we have just shifted our perspective on where the algorithms
start and stop, so we do not think of $(\tilde{s}, \tilde{t})$ as
being extra baggage that we carry around to speed up some
computations, but rather as the actual result.  This approach
nonetheless comes with two disadvantages.  The first, minor,
disadvantage, is that a pair $(\tilde{s}, \tilde{t})$ no longer
represents a (semi-typical) divisor $D$ uniquely, since there are many
choices of $\tilde{D}$ in the same divisor class
with $\deg \tilde{D} = 2g$.  However, we can
always test equality between $(\tilde{s}, \tilde{t})$ and
$(\tilde{s}', \tilde{t}')$ by flipping both and seeing if they yield
the same space $W^{3g+1}_D = W^{3g+1}_{D'}$.

The second, more serious, disadvantage is that at the moment when
we compute a pair $(\tilde{s}, \tilde{t})$, we have not yet certified
that $\tilde{D}$ is typical; this certification happens only after we
flip using Proposition~\ref{proposition3.6} or
Proposition~\ref{proposition3.12}.  Thus in  
case one of those two algorithms fails, we have no guarantee that
$\tilde{s},\tilde{t}$ are an IGS for $\tilde{D}$, and so we may lose
information about what element of the Jacobian we are working with.  In
that case, we would need to backtrack one full step in the computations to
recover the information, and then use a slower general-purpose algorithm.

In conclusion, it is perhaps better in an implementation to stick to the
approach of Subsection~\ref{subsection4.2}, with some lookahead to
determine what to do with a particular pair $(\tilde{s},\tilde{t})$.  For
purposes of reasoning about the algorithm, however, the point of view in
this subsection may be useful.

\section*{Appendix: Speedup of the algorithms for $C_{3,4}$ curves}
\setcounter{equation}{0}
\setcounter{theorem}{0}
\renewcommand{\thesection}{A}

In this appendix, we describe a method to combine the computations of
Sections 8, 9, and~10 of~\cite{FASKKM} into a single more efficient
computation. The context here is that we currently know
$s,t \in W^{10}_{D+D'}$ (where $D$ might equal $D'$, and we know that
$D+D'$ is semi-typical).  In our previous algorithms, we did two 
flips to the pair $\{s,t\}$ to obtain first $D''$ and then $D'''$.
By a modification of Propositions \ref{proposition3.9}
and~\ref{proposition3.12}, we can combine 
these two flips into one computation.  We do not know whether
these techniques generalize to give a certifiably correct result for other
curves, even though they will work generically.  In the setting of
$C_{3,4}$ curves, however, it is easy to analyze when a divisor is
typical, and we can show that the results obtained are correct.

To start, let us change notation to write $(\tilde{D},D,A)$ in this
appendix, instead of $(D+D',D'',D''')$ from~\cite{FASKKM}.  Thus we
have elements $s \in W^9_{\tilde{D}}, t \in W^{10}_{\tilde{D}}$ of the
form

\begin{equation}
\label{equationA.1}
\begin{split}
s &= x^3 + s_1 y^2 + s_2 xy + s_3 x^2 + s_4 y + s_5 x + s_6,\\
t &= x^2 y + t_1 y^2 + t_2 xy + t_3 x^2 + t_4 y + t_5 x + t_6,\\
&\text{with } \Divisor s = \tilde{D} + D,
        \quad \Divisor t = \tilde{D} + E,\\
& \deg \tilde{D} = 6,
        \quad \deg D = 3,
        \text{ and } \deg E = 4.\\
\end{split}
\end{equation}

Our goal is to find the ``flip'' $A$ of~$D$.  Thus $A$ is a divisor
with $\deg A = 3$, and we wish to compute
$F \in W^6_{A+D}, G_0 \in W^7_{A+E}$ for which 
$F t + G_0 s = 0$.  We want to do so while certifying in the process
that $\{s,t\}$ is an IGS for $\tilde{D}$ (i.e., $D$ and $E$ are
disjoint) and that $\tilde{D}$ is typical, whence so 
is $A$.  (The reason for writing $G_0$ is that the 
final $G \in W^7_A$ will be a slight modification.)
As usual, we will compute with the apparently weaker system of equations
$Ft + G_0 s \equiv 0 \bmod W^8$.
This amounts to finding a linear combination of $t,xt,yt,x^2 t$ and
$s,xs,ys,x^2 s, xys$ that vanishes when viewed in the quotient space
$W^{16}/W^8$.  We represent elements of this quotient space as column
vectors with respect to the basis
$\{x^3, x^2 y, xy^2, y^3, x^3 y, x^2 y^2, xy^3, y^4\}$, analogously to
equation~(17) of~\cite{FASKKM}.  Adapting the entries of the matrix~$N$ in
Lemma~8.1 of that article, we have that our desired images of
$t,xt,yt,x^2 t, s,xs,ys,x^2 s, xys$ are the columns $C_1, \dots, C_9$ of
the matrix
\begin{equation}
\label{equationA.2}
\begin{pmatrix}
0 & t_3 & 0 & t_5 & 1 & s_3 & 0 & s_5+q_2 & 0 \\ 
1 & t_2 & t_3 & t_4+q_2+t_3p_2 & 0 & s_2+p_2 & s_3 & s_4+p_1+s_3p_2 & s_5+q_2 \\ 
0 & t_1 & t_2 & p_1 & 0 & s_1 & s_2 & 0 & s_4+p_1 \\ 
0 & 0 & t_1 & t_3 & 0 & 1 & s_1 & s_3 & 0 \\ 
0 & 1 & 0 & t_2 & 0 & 0 & 1 & s_2+p_2 & s_3 \\ 
0 & 0 & 1 & t_1+p_2 & 0 & 0 & 0 & s_1 & s_2+p_2 \\ 
0 & 0 & 0 & 0 & 0 & 0 & 0 & 1 & s_1 \\ 
0 & 0 & 0 & 1 & 0 & 0 & 0 & 0 & 1 \\ 
\end{pmatrix}
\end{equation}
Note that these columns are different from those in Section~9
of~\cite{FASKKM}, where we had 11~columns representing elements of
$W^{17}/W^9$.  The $p_i$ and $q_i$ are constants arising from the
equation of the curve $C$.  We emphasize that we do not compute the above
matrix directly, since this would involve the two products $t_3 p_2$ and
$s_3 p_2$, which we do not need separately, but can fold into other parts
of the computation.

As in our earlier article, we count the complexity of a computation in
terms of the number of multiplications $M$ and inversions $I$ it takes in
the field $\fieldK$.  We ignore additions and subtractions, as well as
multiplications and divisions by $2$ in $\fieldK$; recall that we assume
in~\cite{FASKKM} that $\fieldK$ does not have characteristic $2$ or~$3$.

The first stage of the computation is to compute three quantities $\ell_1,
\ell_2, \ell_3$ that will be useful later, for which nonvanishing of
$\ell_1$ is equivalent to $\tilde{D}$ (and hence $A$) being typical:
\begin{lemma}
\label{lemmaA.1}
Using $3M$, we can compute
\begin{equation}
\label{equationA.3}
\ell_1 = t_1 - s_2 + s_1^2,
\qquad
\ell_2 = t_2 - s_3 + s_1(s_2 + p_2),
\qquad
\ell_3 = t_3 + s_1 s_3.
\end{equation}
We then have:
\begin{enumerate}
\item
The combination of columns $C'_2 = C_2 - C_7 + s_1 C_6$, which represents 
$xt - ys + s_1 xs$, is equal to the column vector
$(\ell_3, \ell_2, \ell_1, 0, 0, 0, 0, 0)^\mathbf{T}$;
\item
Similarly, $C'_4 = C_4 - C_9 + s_1 C_8$, which represents 
$x(xt - ys + s_1 xs)$, has the form
$(*,*,*,\ell_3, \ell_2, \ell_1, 0, 0)^\mathbf{T}$;
\item
The divisor $\tilde{D}$ is typical if and only if $\ell_1 \neq 0$.
\end{enumerate}
\end{lemma}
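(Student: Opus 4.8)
The plan is to treat parts~(1) and~(2) as routine matrix bookkeeping and to concentrate the real argument on part~(3), where the characterization of typicality enters.

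For part~(1), I would form the column $C_2 - C_7 + s_1 C_6$ from the entries recorded in~\eqref{equationA.2} and read off the eight coordinates: the first three come out to $t_3 + s_1 s_3$, $t_2 - s_3 + s_1(s_2+p_2)$, and $t_1 - s_2 + s_1^2$, i.e.\ $\ell_3,\ell_2,\ell_1$; the fourth and fifth collapse to $-s_1 + s_1 = 0$ and $1 - 1 = 0$; and the last three are already zero in each of $C_2, C_7, C_6$. Part~(2) is the same computation applied to $C_4 - C_9 + s_1 C_8$: this column represents $x$ times the element underlying $C'_2$, which has degree at most $11$, so $x$ times it has degree at most $14$ and its last two coordinates vanish; the three cancellations reappear in rows $4,5,6$, giving $(\ast,\ast,\ast,\ell_3,\ell_2,\ell_1,0,0)^{\mathbf{T}}$ (one can equally just compute the eight entries directly). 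The $3M$ estimate is immediate: producing $\ell_1,\ell_2,\ell_3$ costs precisely the three products $s_1^2$, $s_1(s_2+p_2)$, $s_1 s_3$, everything else being additions.

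For part~(3), the first step is to unwind Definition~\ref{definition2.4} in the present situation. Here $g=3$ and $\deg\tilde D = 6$, so $W^{\tilde d+g}_{\tilde D} = W^9_{\tilde D}$, $W^{\tilde d+g+1}_{\tilde D}=W^{10}_{\tilde D}$, and $\tilde D$ is typical precisely when some $\tilde s\in W^9_{\tilde D}$, $\tilde t\in W^{10}_{\tilde D}$ satisfy $\tilde s W^6 + \tilde t W^5 + W^8 = W^{15}$. Our given $s,t$ lie in $W^{10}_{\tilde D}$ (since $\Divisor s = \tilde D + D$ and $\Divisor t = \tilde D + E$) and have degrees $9$ and $10$ (leading terms $x^3$ and $x^2 y$); so Definition~\ref{definition2.4} (the ``if'' direction) together with Proposition~\ref{proposition2.5}(3) applied to these particular $s,t$ (the ``only if'' direction) shows that $\tilde D$ is typical if and only if $sW^6 + tW^5 + W^8 = W^{15}$. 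Now $W^6 = \linalgspan\{1,x,y,x^2\}$ and $W^5 = \linalgspan\{1,x,y\}$, so $sW^6 + tW^5$ is spanned by the seven elements $s, xs, ys, x^2 s, t, xt, yt$, all of degree $\le 15$; since $\dim W^{15} - \dim W^8 = 13 - 6 = 7$, the equality holds if and only if the images of these seven elements in $W^{15}/W^8$ are linearly independent. Those images are exactly the columns $C_5, C_6, C_7, C_8, C_1, C_2, C_3$ of~\eqref{equationA.2} (each has a zero in its bottom, $y^4$, slot, so it genuinely lies in $W^{15}/W^8 \subset W^{16}/W^8$). Finally I would perform the single column operation $C_2 \mapsto C'_2 = C_2 - C_7 + s_1 C_6$ (which does not change the span), reorder the seven columns as $C_5, C_1, C'_2, C_6, C_7, C_3, C_8$, and restrict to the top seven rows: by part~(1) and inspection of~\eqref{equationA.2} this is an upper-triangular matrix with diagonal $(1,1,\ell_1,1,1,1,1)$, hence of determinant $\ell_1$. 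So the seven columns are independent exactly when $\ell_1\ne 0$, which is part~(3).

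The delicate step is the reduction at the start of part~(3): one must pin down the correct spaces ($W^6, W^5, W^8, W^{15}$ for the parameters $\tilde d = 6$, $g=3$), check that the given $s,t$ have the degrees needed to invoke Proposition~\ref{proposition2.5}(3) in the ``only if'' direction (and lie in $W^{10}_{\tilde D}$ for the ``if'' direction), and verify that after the one column operation and the reordering the relevant $7\times 7$ block of~\eqref{equationA.2} is truly triangular with the stated diagonal. The rest --- parts~(1), (2), and the operation count --- is mechanical.
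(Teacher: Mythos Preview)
Your proposal is correct and follows essentially the same route as the paper's own proof: parts~(1) and~(2) are dismissed as direct computation, and part~(3) is handled by observing that typicality of $\tilde D$ amounts to $sW^6 + tW^5 + W^8 = W^{15}$, then replacing $C_2$ by $C'_2$ and reordering the seven relevant columns so that the top $7\times 7$ block becomes triangular with diagonal $(1,1,\ell_1,1,1,1,1)$. Your write-up is slightly more explicit than the paper's in one useful respect: you spell out why the biconditional in part~(3) holds for \emph{these particular} $s,t$ (invoking Proposition~\ref{proposition2.5}(3) for the ``only if'' direction), whereas the paper leaves that step implicit.
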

\begin{proof}
Statements (1) and~(2) are direct computations.  Only statement~(3),
about typicality, needs proof.  Now $\tilde{D}$ is typical if and only
if we have invertibility of the $7\times 7$ submatrix 
of~\eqref{equationA.2} obtained from the columns corresponding to
$\{t, xt, yt, s, xs, ys, x^2s\}$ and the first seven rows, since this
corresponds to having $sW^6 + tW^5 + W^8 = W^{15}$.  The columns in
question are all except $C_4$ and $C_9$, and we can further replace
$C_2$ by $C'_2$, as given above, without affecting the invertibility; but
in that case the columns can be rearranged to form a triangular matrix with
diagonal entries all $1$, except for a single $\ell_1$.  This proves our
result. 
\end{proof}

We now define four more quantities $m_0, m_1, m_2, m_3$ by:
\begin{equation}
\label{equationA.4}
\begin{split}
m_0 &= \ell_3 - \ell_1 t_1,\\
m_1 &= -s_4 - (\ell_1 t_2 + \ell_2 t_1) - m_0 s_1,\\
m_2 &= t_4 - s_5 + s_1 (s_4 + p_1) + p_2 \ell_3 
            - (\ell_1 t_3 + \ell_2 t_2) - m_0 (s_2 + p_2), \\
m_3 &= t_5 + s_1(s_5 + q_2) - \ell_2 t_3 - m_0 s_3.\\
\end{split}
\end{equation}

The motivation for the above quantities is that 
$C''_4 = C'_4 - \ell_1 C_3 - \ell_2 C_2$ has the form
$(*,*,*,m_0,0,0,0,0)^\mathbf{T}$, while
$C'''_4 = C''_4 - m_0 C_6 = (m_3, m_2, m_1, 0,0,0,0,0)^\mathbf{T}$.
However, this fact is not needed to verify our proof below. 

\begin{lemma}
\label{lemmaA.2}
One can compute $m_0, \dots, m_3$ using only $10M$, as opposed to the $12M$
apparent in~\eqref{equationA.4}.
\end{lemma}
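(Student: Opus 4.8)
The plan is to rewrite the right-hand sides of~\eqref{equationA.4} so that the six products in which $\ell_1$ or $\ell_2$ appears collapse into a single polynomial multiplication costing $4M$, while the remaining six products are left untouched, for a grand total of $10M$ instead of the $12M$ one reads off directly from~\eqref{equationA.4}.

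First I would scan~\eqref{equationA.4} and record exactly in which combinations $\ell_1$ and $\ell_2$ meet $t_1,t_2,t_3$: the term $\ell_1 t_1$ occurs only inside $m_0$, the \emph{symmetric} combination $\ell_1 t_2 + \ell_2 t_1$ occurs only inside $m_1$, the symmetric combination $\ell_1 t_3 + \ell_2 t_2$ occurs only inside $m_2$, and $\ell_2 t_3$ occurs only inside $m_3$. The key point is that these four quantities are precisely the coefficients of the product of the linear form $\ell_1 + \ell_2 z$ with the quadratic form $t_1 + t_2 z + t_3 z^2$,
\begin{equation*}
(\ell_1 + \ell_2 z)(t_1 + t_2 z + t_3 z^2) = \ell_1 t_1 + (\ell_1 t_2 + \ell_2 t_1)z + (\ell_1 t_3 + \ell_2 t_2)z^2 + \ell_2 t_3 z^3,
\end{equation*}
and that the individual products $\ell_1 t_2$ and $\ell_2 t_1$ (likewise $\ell_1 t_3$ and $\ell_2 t_2$) are never needed separately. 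This cubic has only four coefficients, so it can be obtained by evaluation--interpolation at $z = 0, \infty, 1, -1$ using $4M$: one computes $c_0 = \ell_1 t_1$, $c_3 = \ell_2 t_3$, $v_1 = (\ell_1+\ell_2)(t_1+t_2+t_3)$, $v_{-1} = (\ell_1-\ell_2)(t_1-t_2+t_3)$, and then recovers $\ell_1 t_2 + \ell_2 t_1 = (v_1 - v_{-1})/2 - c_3$ and $\ell_1 t_3 + \ell_2 t_2 = (v_1 + v_{-1})/2 - c_0$ with no further multiplication (divisions by $2$ being free, since $\operatorname{char}\fieldK \neq 2$).

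With $c_0 = \ell_1 t_1$ already in hand, $m_0 = \ell_3 - c_0$ costs nothing more, and then $m_1, m_2, m_3$ are assembled from~\eqref{equationA.4} using only the six genuine remaining products $m_0 s_1$, $s_1(s_4+p_1)$, $p_2\ell_3$, $m_0(s_2+p_2)$, $s_1(s_5+q_2)$, $m_0 s_3$; the total is $4 + 6 = 10$. I expect the only real difficulty to be the first step: recognising the hidden $\deg 1 \times \deg 2$ polynomial-product pattern in~\eqref{equationA.4} and checking carefully that the $\ell$-terms enter only through the two symmetric sums $\ell_1 t_2 + \ell_2 t_1$ and $\ell_1 t_3 + \ell_2 t_2$ — everything afterward is routine bookkeeping. (The remaining six products share the common factors $m_0$ and $s_1$, but since their partners are unrelated and they feed into distinct $m_i$, no further Karatsuba-style saving of this kind is possible; none is needed for the stated bound anyway.)
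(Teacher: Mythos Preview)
Your proposal is correct and is essentially the same as the paper's proof: both recognise the four quantities $\ell_1 t_1$, $\ell_1 t_2 + \ell_2 t_1$, $\ell_1 t_3 + \ell_2 t_2$, $\ell_2 t_3$ as the coefficients of $(\ell_1 + \ell_2 z)(t_1 + t_2 z + t_3 z^2)$ and compute them in $4M$ by Toom--Cook interpolation at $0,1,-1,\infty$, leaving the six remaining products untouched. Your write-up is in fact slightly more explicit than the paper's, since you enumerate the six residual multiplications and verify the interpolation formulas directly.
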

\begin{proof}
The point is that the four expressions 
$\alpha = \ell_1 t_1, \beta = \ell_1 t_2 + \ell_2 t_1,
 \gamma = \ell_1 t_3 + \ell_2 t_2, \delta = \ell_2 t_3$
can be computed using just $4M$ instead of the apparent $6M$.  This is
equivalent to Toom-Cook multiplication of polynomials via
interpolation at $0$, $1$, $-1$, and ``$\infty$''.  Explicitly, use
$4M$ to compute
$t_1 \ell_1$, $\ell_2 t_3$, $(t_1 + t_2 + t_3)(\ell_1 + \ell_2)$,
and  $(t_1 - t_2 + t_3)(\ell_1 - \ell_2)$.  Thus we know the quantities
$\alpha, \delta, \alpha+\beta+\gamma+\delta,
 \alpha - \beta + \gamma - \delta$.  Hence we also know $\beta \pm \gamma$
at no extra cost (of multiplications $M$), and can determine
$\beta, \gamma$ at no further 
cost, because division by $2$ is also ``free'' in our model.
\end{proof}

\begin{proposition}
\label{propositionA.3}
Given $\ell_1, \ell_2, \ell_3, m_0, m_1, m_2, m_3$ as above, one can at a
further cost of $1I, 4M$ compute $\ell_1^{-1}, m_1/\ell_1,
(m_1/\ell_1)\ell_2, (m_1/\ell_1)\ell_3, (m_1/\ell_1)s_1$, thereby obtaining
the following values of $F, G_0$:
\begin{equation}
\label{equationA.5}
\begin{split}
F &= x^2 - \ell_1 y - (\frac{m_1}{\ell_1} + \ell_2) x
        + (\frac{m_1}{\ell_1}) \ell_2 - m_2,\\
G_0 &= -xy + s_1 x^2 + (\frac{m_1}{\ell_1})y
           - (m_0 + (\frac{m_1}{\ell_1})s_1) x 
           + (\frac{m_1}{\ell_1}) \ell_3 - m_3.\\
\end{split}
\end{equation}
Being able to invert $\ell_1$ certifies that $\tilde{D}$ is
typical, and that the above computation correctly finds $F \in W^6_{D}$.
Writing $\Divisor F = D+A$, we also obtain that $\Divisor G_0 = A+E$, and
that the pair $(F,G)$ with $G = -G_0 + s_1 F$ is an IGS for the typical
divisor $A$.  It costs a further $3M$ to compute the coefficients of
$G$ from $F$ and $G_0$.  Thus the total cost of this proposition is
$1I, 7M$, if done in two stages.  However, it is possible to bring the
total cost down to $1I, 6M$, by combining both parts of the
computation to yield $F$ and $G$ directly.
\end{proposition}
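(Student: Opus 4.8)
The plan is to split Proposition~\ref{propositionA.3} into three claims: (a) that invertibility of $\ell_1$ certifies $\tilde D$, hence $A$, to be typical; (b) that the explicit $F$ and $G_0$ of~\eqref{equationA.5} really satisfy $Ft+G_0s=0$ and the asserted divisor identities; and (c) the multiplication count. For (a): if $\ell_1^{-1}$ exists then $\ell_1\neq0$, so $\tilde D$ is typical by Lemma~\ref{lemmaA.1}(3). Since $\deg s=9=\tilde d+g$ and $\deg t=10=\tilde d+g+1$, Proposition~\ref{proposition2.5}(3) shows $\{s,t\}$ is an IGS for $\tilde D$; equivalently $I_{\tilde D}=s\Raff+t\Raff$, and the divisors $D$, $E$ determined by $\Divisor s=\tilde D+D$, $\Divisor t=\tilde D+E$ are disjoint. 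Since $\tilde D$ is typical, its flip $D$ (a divisor of degree $3$) is semi-typical by Proposition~\ref{proposition3.5}, so $W^6_D$ is one-dimensional; and the semi-typicality of $\tilde D$ itself gives $W^8_{\tilde D}=W^{\tilde d+g-1}_{\tilde D}=0$.

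The core of (b) is the congruence $Ft+G_0s\equiv0\pmod{W^8}$. I would prove it by reading the coefficients of $F$ and $G_0$ off~\eqref{equationA.5}, writing $Ft+G_0s$ modulo $W^8$ as the corresponding combination of the columns $C_1,\dots,C_9$ of~\eqref{equationA.2}, and then invoking the two reductions already recorded in the text: $C'_2=C_2-C_7+s_1C_6=(\ell_3,\ell_2,\ell_1,0,0,0,0,0)^{\mathbf T}$ from Lemma~\ref{lemmaA.1}(1), and $C'''_4=C_4-C_9+s_1C_8-\ell_1C_3-\ell_2C_2-m_0C_6=(m_3,m_2,m_1,0,0,0,0,0)^{\mathbf T}$ from the paragraph preceding Lemma~\ref{lemmaA.2}. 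Collecting the nine columns into these two blocks leaves, with $\mu:=m_1/\ell_1$,
\[
Ft+G_0s\equiv C'''_4-\mu\,C'_2+(\mu\ell_2-m_2)\,C_1+(\mu\ell_3-m_3)\,C_5\pmod{W^8}.
\]
Because $m_1-\mu\ell_1=0$, the vector $C'''_4-\mu C'_2$ is supported on the $x^3$- and $x^2y$-slots only, with entries $m_3-\mu\ell_3$ and $m_2-\mu\ell_2$; as $C_5$ and $C_1$ are the standard basis vectors in exactly those two slots, the two correction terms cancel it and the right-hand side is $0$.

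Granting the congruence, $Ft+G_0s$ lies in $(s\Raff+t\Raff)\cap W^8=I_{\tilde D}\cap W^8=W^8_{\tilde D}=0$, so in fact $Ft+G_0s=0$. Comparing affine divisors and cancelling $\tilde D$ gives $\Divisor F+E=\Divisor G_0+D$. Here $F$ is monic of degree $6$ and $G_0$ has nonzero $xy$-coefficient, hence degree $7$, so the degrees balance; and since $D$ and $E$ are disjoint, this forces $D\leq\Divisor F$ and $E\leq\Divisor G_0$. Writing $\Divisor G_0=E+A$ with $\deg A=3$ then yields $\Divisor F=D+A$, and since the nonzero $F$ spans the one-dimensional $W^6_D$, this identifies $A$ as the flip of $D$. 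Finally, $F=x^2-\ell_1y+\cdots$ has $y$-coefficient $a=-\ell_1\neq0$, and $G:=-G_0+s_1F$ has vanishing $x^2$-coefficient (that of $G_0$ being $s_1$) and $xy$-coefficient $1$, so $\{F,G\}$ is a normalized pair of the shape in Proposition~\ref{proposition2.9}(1) for $A$; Proposition~\ref{proposition2.9}(2) then gives that $A$ is typical with $\{F,G\}$ an IGS for it, while $G_0\in W^7_A$ with $\Divisor G_0=A+E$.

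For the cost in (c): one inversion produces $\ell_1^{-1}$, after which the four products $m_1\ell_1^{-1}$, $(m_1/\ell_1)\ell_2$, $(m_1/\ell_1)\ell_3$, $(m_1/\ell_1)s_1$ make every coefficient of $F$ and $G_0$ in~\eqref{equationA.5} a matter of additions, for $1I+4M$; obtaining $G=-G_0+s_1F$ then costs the three products of $s_1$ with the non-leading coefficients of $F$, giving $1I+7M$ in two stages. For the merged version I would check that the six products $m_1\ell_1^{-1}$, $\mu\ell_2$, $\mu\ell_3$, $s_1\ell_1$, $s_1\ell_2$, $s_1(\mu\ell_2-m_2)$ already pin down $F$ (coefficients $1,-\ell_1,-(\mu+\ell_2),\mu\ell_2-m_2$) and $G$ directly via $G=xy+(-\mu-s_1\ell_1)y+(m_0-s_1\ell_2)x+(m_3-\mu\ell_3+s_1(\mu\ell_2-m_2))$, with no further multiplication, for a total of $1I+6M$. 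I expect the one genuine obstacle to be the bookkeeping inside (b): carrying out the column computation above correctly, and then the divisor-theoretic step that uses disjointness of $D$ and $E$ to extract $\Divisor F=D+A$ and $\Divisor G_0=E+A$ from the single relation $\Divisor F+E=\Divisor G_0+D$; everything else is routine.
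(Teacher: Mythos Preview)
Your proof is correct and follows the same approach as the paper's own proof. The paper verifies the congruence $Ft+G_0s\equiv 0\bmod W^8$ by a ``lengthy calculation (preferably using a computer)'', whereas you carry it out explicitly by grouping the column combination into $C'''_4-\mu C'_2+(\mu\ell_2-m_2)C_1+(\mu\ell_3-m_3)C_5$; this is a genuine improvement in exposition. Your divisor-theoretic deduction of $\Divisor F=D+A$ and $\Divisor G_0=A+E$ from $Ft+G_0s=0$ via disjointness of $D$ and $E$ is also more explicit than the paper, which simply says ``the statement about $\Divisor G_0$ follows''. For the $1I+6M$ count, the paper argues by identifying a $2M\to 1M$ collapse in the $x$-coefficient of $G$ (which simplifies to $m_0-s_1\ell_2$), while you instead list six products directly; the two lists coincide, so the arguments are equivalent.
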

\begin{proof}
One can check by a lengthy calculation (preferably using a computer)
that $F t + G_0 s \equiv 0 \bmod W^8$; this amounts to checking that
the appropriate linear combination of columns of~\eqref{equationA.2}
vanishes. We have already shown that invertibility of $\ell_1$ implies
that $\tilde{D}$ is typical.  This implies that the divisors $D$ and $E$ 
from~\eqref{equationA.1} are disjoint, and that $W^8_{\tilde{D}} = 0$,
so we obtain as usual that $F t + G_0 s = 0$, and that $F \in W^6_D$.
The statement about $\Divisor G_0$ follows.  Computing $G$ from $G_0$
involves $3M$ because we need to multiply $s_1$ by each of the coefficients 
$\ell_1, (\frac{m_1}{\ell_1} + \ell_2),
((\frac{m_1}{\ell_1}) \ell_2 - m_2)$ of $F$.  We thus obtain a pair $(F,G)$
in $W^7_A$ whose $F$ has a coefficient $-\ell_1$ for the $y$ monomial.
Thus we have obtained a description of the divisor $A$ as in~\cite{FASKKM},
with the equivalent of $a\neq 0$ from Proposition~\ref{proposition2.9}, and
no added cost to compute $a^{-1} = -\ell_1^{-1}$.

We now explain the extra saving of $1M$ from folding the computations
together.  This comes from the coefficient of $x$ in $G$.  As stated
currently, it appears to take $2M$ to compute this coefficient: (i) the
first $M$ comes from the multiplication $(m_1/\ell_1)\cdot s_1$, to
compute the coefficient of $x$ in $G_0$, which is 
$-(m_0 + (m_1/\ell_1)s_1)$; (ii) the second $M$ comes when we compute
$G = -G_0 + s_1 F$, since we multiply $s_1$ by the coefficient of $x$
in $F$, which is $-((m_1/\ell_1) + \ell_2)$.  However it is immediate
that the coefficient of $x$ in $G$ that results from this is
\begin{equation}
\label{equationA.6}
-(-(m_0 + (m_1/\ell_1)s_1)) + s_1(-((m_1/\ell_1) + \ell_2))
 = m_0 - s_1 \ell_2,
\end{equation}
which can naturally be computed using the single $M$ of $s_1 \cdot
\ell_2$.  This concludes the proof.
\end{proof}

Combining Lemmas \ref{lemmaA.1} and~\ref{lemmaA.2} with
Proposition~\ref{propositionA.3}, we obtain the following result:

\begin{theorem}
\label{theoremA.4}
The above procedure produces the same effect as Proposition~9.3 and
Proposition~10.1(i) of~\cite{FASKKM}.  This means that we can use a total
of $19M, 1I$ to replace what took us $38M, 1I$ in~\cite{FASKKM}.
Consequently, the cost of Jacobian operations in a $C_{3,4}$ curve can be
reduced by $19M$ to obtain that addition of typical elements can be carried
out using $98M, 2I$ while doubling can be carried out using $110M, 2I$.
The results are certified to be correct and typical, provided all
inverses can be computed.  This represents a further speedup of
approximately 15\% over the results of that article.
\end{theorem}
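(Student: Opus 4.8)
The plan is to split the statement into a correctness-and-typicality part, which is essentially already done, and a cost accounting, which is the real content. For correctness, everything needed is in Proposition~\ref{propositionA.3} together with Lemmas~\ref{lemmaA.1} and~\ref{lemmaA.2}: being able to invert $\ell_1$ certifies that $\tilde{D}$ is typical (hence so is its flip $A$), and the procedure then outputs $F\in W^6_D$ and $G_0\in W^7_{A+E}$ with $Ft+G_0s=0$, together with $G=-G_0+s_1F\in W^7_A$, giving an IGS $(F,G)$ for $A$ in which the coefficient of $y$ in $F$ is $-\ell_1\neq 0$. The one ingredient I would double-check (by symbolic computation, as the author notes) is that the indicated linear combination of the columns $C_1,\dots,C_9$ of~\eqref{equationA.2} vanishes in $W^{16}/W^8$; together with $W^8_{\tilde{D}}=0$ --- which holds because $\tilde{D}=D+D'$ is semi-typical by Proposition~\ref{proposition3.3}, or by Corollary~\ref{corollary3.11} when $D=D'$ --- this upgrades the congruence modulo $W^8$ to the exact identity $Ft+G_0s=0$. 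Since the resulting $(F,G)$, stored together with $a^{-1}=-\ell_1^{-1}$, is precisely the normalized data (in the sense of Proposition~\ref{proposition2.9}) that the computations behind Proposition~9.3 and Proposition~10.1(i) of~\cite{FASKKM} produce for the divisor called $D'''$ there, nothing downstream in those algorithms changes; this is the sense in which the new procedure ``produces the same effect''.

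For the cost accounting I would simply tally the three ingredients: Lemma~\ref{lemmaA.1} computes $\ell_1,\ell_2,\ell_3$ in $3M$; Lemma~\ref{lemmaA.2} computes $m_0,\dots,m_3$ in $10M$ (the Toom--Cook interpolation trick delivering $\alpha,\beta,\gamma,\delta$ in $4M$ rather than the apparent $6M$); and the folded version of Proposition~\ref{propositionA.3} produces $\ell_1^{-1}$, $F$, and $G$ in $1I,6M$. The sum is $19M,1I$ for the whole block. In~\cite{FASKKM} the identical output was obtained by the computations behind Proposition~9.3 followed by Proposition~10.1(i), at a cost of $38M,1I$, so the replacement removes $38-19=19M$ while leaving the single inversion in place; the other inversion used in a full Jacobian operation --- the one in Proposition~6.1 of~\cite{FASKKM}, inside the addflip/doubleflip step --- is untouched, so the operation count stays at $2I$. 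Subtracting $19M$ from the previous totals ($117M,2I$ for addition and $129M,2I$ for doubling) yields $98M,2I$ and $110M,2I$, and $19$ out of roughly $117$ to $129$ is about $15\%$.

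The only real obstacle I anticipate is in the matching step: one has to verify that the normalization conventions for the new $F,G$ agree exactly with those of~\cite{FASKKM} (monic $F$ with leading term $x^2$, and the $x^2$-coefficient of $G$ eliminated, here by the choice $G=-G_0+s_1F$), and that the stored inverse $a^{-1}=-\ell_1^{-1}$ is exactly the quantity the next stage of the algorithm consumes, so that ``same effect'' is literal and not merely up to a harmless rescaling. The algebraic identity $Ft+G_0s\equiv 0 \bmod W^8$ is a finite, routine verification best delegated to a computer, and the multiplication counts in Lemmas~\ref{lemmaA.1} and~\ref{lemmaA.2} and in Proposition~\ref{propositionA.3} have already been established, so no new estimate is required.
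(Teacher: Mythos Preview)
Your proposal is correct and matches the paper's approach, which in fact gives no explicit proof at all: the theorem is stated immediately after the sentence ``Combining Lemmas~\ref{lemmaA.1} and~\ref{lemmaA.2} with Proposition~\ref{propositionA.3}, we obtain the following result,'' so the entire argument is deferred to those three results. Your tally $3M+10M+(1I,6M)=19M,1I$ is exactly the intended combination, and your back-computation of the prior totals $117M,2I$ and $129M,2I$ from~\cite{FASKKM} (so that subtracting $19M$ yields $98M$ and $110M$) is the right bookkeeping; the paper simply asserts these numbers without reproducing them.
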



\bibliographystyle{amsalpha}

\bibliography{article}

\end{document}